\newcommand{\margnote}[1]{
\ifthenelse{\boolean{shownotes}}%
{\marginpar{\raggedright\tiny\texttt{#1}}}%
{}%
}
\newcommand{\hole}[1]{
\ifthenelse{\boolean{shownotes}}%
{\begin{center} \fbox{ \rule {.25cm}{0cm}
\rule[-.1cm]{0cm}{.4cm} \parbox{.85\textwidth}{\begin{center}
\texttt{#1}\end{center}} \rule {.25cm}{0cm}}\end{center}}
{}
}
\newtheorem{theorem}{Theorem}[section]   
\newtheorem{corollary}[theorem]{Corollary}
\newtheorem{lemma}[theorem]{Lemma}
\newtheorem{proposition}[theorem]{Proposition}
\theoremstyle{definition}
\newtheorem{definition}[theorem]{Definition}
\newtheorem{remark}[theorem]{Remark}
\numberwithin{equation}{section}
\subjclass{Primary: 35Q35; Secondary: 60H15, 76M35.}
\keywords{Stochastic compressible fluids,  Navier-Stokes-Korteweg equations, weak martingale solutions.}
\begin{document}

\title[Stochastic Quantum-Navier-Stokes equations]{Weak martingale solutions to the stochastic 1D Quantum-Navier-Stokes equations}

\author[D. Donatelli, L. Pescatore, S. Spirito]{D. Donatelli, L. Pescatore, S. Spirito}
\address[D.Donatelli]{DISIM - Dipartimento di Ingegneria e Scienze dell'Informazione e Matematica\\ Universit\`a  degli Studi dell'Aquila \\Via Vetoio \\ 67100 L'Aquila \\ Italy}
\email[]{\href{donatella.donatelli@}{donatella.donatelli@univaq.it}}
\address[L.Pescatore]{DISIM - Dipartimento di Ingegneria e Scienze dell'Informazione e Matematica\\ Universit\`a  degli Studi dell'Aquila \\Via Vetoio \\ 67100 L'Aquila \\ Italy}
\email[]{\href{L.Pescatore@}{lorenzo.pescatore@graduate.univaq.it}}
\address[S. Spirito]{DISIM - Dipartimento di Ingegneria e Scienze dell'Informazione e Matematica\\ Universit\`a  degli Studi dell'Aquila \\Via Vetoio \\ 67100 L'Aquila \\ Italy}
\email[]{\href{stefano.spirito@}{stefano.spirito@univaq.it}}

\begin{abstract}
In this paper we prove the existence of global weak dissipative martingale solutions for a one-dimensional compressible fluid model with capillarity and density dependent viscosity,  driven by random initial data and a stochastic forcing term.  These solutions are weak in both PDEs and Probability sense and may have vacuum regions. The proof relies on the construction of an approximating system which provides extra dissipation properties and the  convergence is based on an appropriate truncation of the velocity field in the momentum equation and a stochastic compactness argument.
\end{abstract}

\maketitle

\section{Introduction}
In this paper we consider the one-dimensional Quantum-Navier-Stokes equations driven by random initial data and a stochastic force, given by the following set of equations
\begin{equation} \label{stoc quantum}
\begin{cases}
\text{d}\rho+\partial_x(\rho u)\text{d}t=0,\\
\text{d}(\rho u)+[ \partial_x(\rho u^2+p(\rho))]\text{d}t=[\partial_x(\mu(\rho)\partial_x u)+ \rho \partial_x \bigg( \dfrac{\partial_{xx} \sqrt{\rho}}{\sqrt{\rho}}\bigg) ]\text{d}t +\mathbb{G}(\rho,\rho u)\text{d}W,
\end{cases}
\end{equation}
where $x \in \mathbb{T},$ the one-dimensional flat torus, and $t \in (0,T).$ 
The  unknowns $\rho>0$ and $u\in\mathbb{R}$ represent the density and the velocity of the fluid, respectively.  Concerning the constitutive law for the pressure and the assumptions on the viscosity, we consider 
\begin{equation}\label{eq:alpha}
p(\rho)=\rho^{\gamma} , \quad \gamma > 1 , \quad \mu(\rho)= \rho^\alpha, \quad \frac{1}{2} <  \alpha \le 1,
\end{equation}
respectively. The stochastic forcing term $\mathbb{G}(\rho, u)\text{d}W$ is a multiplicative noise and the related stochastic integral is understood in the It$\hat{\text{o}}$ sense.
System \eqref{stoc quantum} is endowed with the following initial conditions 
\begin{equation} \label{C.I}
(\rho(x,t), u(x,t))_{|_{t=0}}=(\rho_0(x), u_0(x))
\end{equation} 
and periodic boundary conditions 
\begin{equation}\label{C.B}
\rho(0,t)=\rho(1,t), \quad u(0,t)=u(1,t).
\end{equation}
The Quantum-Navier-Stokes equations fall within the wider class of fluid dynamical evolution equations called the Navier-Stokes-Korteweg equations,  where capillarity effects are not negligible and are indeed included in the dynamics of the fluid. In particular,  the multi-dimensional case reads as follows
\begin{equation} \label{3D}
\begin{cases}
\partial_t \rho + \operatorname{div}(\rho u)=0, \\
\partial_t (\rho u) + \operatorname{div} (\rho u \otimes u)+ \nabla p = \operatorname{div} \mathbb{S} + \operatorname{div}\mathbb{K},
\end{cases}
\end{equation}
where $\mathbb{S}=\mathbb{S}(\nabla u)$ is the viscosity stress tensor $$ \mathbb{S}= \mu(\rho) D u+ g(\rho) \operatorname{div}u \mathbb{I},$$
with $\mu(\rho), g(\rho)$ viscosity coefficients satisfying the Lam\'e relation $\mu(\rho) \ge 0, \; \mu(\rho)+d g(\rho) \ge 0$ and  $\mathbb{K}=\mathbb{K}(\rho, \nabla \rho)$ is the capillarity tensor given by 
$$ \mathbb{K}= \bigg ( \rho \operatorname{div}(k(\rho) \nabla \rho)-\dfrac{1}{2}(\rho k'(\rho)-k(\rho)) | \nabla \rho |^2 \bigg ) \mathbb{I}-k(\rho) \nabla \rho \otimes \nabla \rho,$$
with $k(\rho)$ denoting the capillary coefficient.  
\\
The formal derivation of the Korteweg tensor $\mathbb{K}$ has been performed in \cite{Dunn}.  The choice of the capillarity coefficient $k(\rho)=\frac{1}{\rho}$ reduces the Korteweg tensor to the so-called Bohm potential $\frac{\Delta{\sqrt{\rho}}}{\sqrt{\rho}}$, which in the inviscid case appears in the description of Quantum Hydrodynamics model (QHD) for superfluids and Bose-Einstein condensate, see \cite{Landau}. The name Quantum-Navier-Stokes equations (QNS) arose in the literature by analogy with the inviscid case.
The system \eqref{stoc quantum} is the one-dimensional stochastic version of the Navier-Stokes-Korteweg equations \eqref{3D} with the choices $g(\rho)=0,  \;k(\rho)=\frac{1}{\rho}$ and stochastic forcing term $\mathbb{G}(\rho, u)\text{d}W.$ 
\\ \\
In this paper we prove the existence of global weak dissipative martingale solutions for the system \eqref{stoc quantum}-\eqref{C.B}. These solutions are weak in both PDEs and Probability sense and vacuum regions of the density are allowed.  To the best of our knowledge,  this is the first result concerning the global existence of solutions for the Stochastic-Quantum-Navier-Stokes equations in the framework of weak solutions. As, a byproduct of our result we also obtain the global existence of weak solutions for the deterministic case $\mathbb{G}=0.$  In particular, we extend the theory of weak solutions developed in the monograph \cite{Feir} for stochastically forced compressible flows to the case of the Quantum-Navier-Stokes equations.  Such an extension requires non-trivial arguments due to the presence of the Korteweg tensor and its non-linear structure.
In particular, we adapt to the stochastic framework to the techniques introduced in \cite{Lacroix} and used in \cite{Spirito, Ant6, Bresch}.\\

In our analysis, vacuum regions are considered in the definition of weak dissipative martingale solutions and in order to prove our existence result we make use of an approximating system which enjoys extra dissipation properties and admits more regular solutions. In particular, we note that the velocity field $u$ and its derivative $\partial_x\,u$ may be not uniquely defined on the set $\{ \rho=0 \}$ and then weak solutions must be properly defined. In order to do that, as common in this case, the definition of weak solutions in given in terms of $\Lambda,  \zeta$ which satisfy the following identities
$$ \sqrt{\rho}\Lambda=m, \quad \rho^{\frac{\alpha}{2}}\zeta= \partial_x(\rho^{\alpha-\frac{1}{2}}\Lambda)-2\rho^{\frac{\alpha-1}{2}}\Lambda \partial_x \rho^{\frac{\alpha}{2}}.$$
As a consequence, the appropriate application of our stochastic compactness argument, as well as the identification of the limit process as a weak dissipative martingale solutions, presents some technical difficulties. Nevertheless we stress that stochastic effects are considered in both random initial data and in the stochastic forcing term $\mathbb{G}$ for which we  assume only a limited amount of regularity, see Remark \ref{Remark 2}.\\

Next we comment of the range of the exponents of the viscosity coefficients. A main tool in our analysis is the so-called BD Entropy introduced in \cite{BD}, namely an {\em a priori} estimate providing additional regularity on the density. In particular, the BD Entropy implies a control on $\rho^{\alpha-\frac{1}{2}}\in L^{\infty}_t(H_x^{1})$ and then for $\alpha \in [0,\frac{1}{2}],$ vacuum regions cannot appear, see \cite{D.P.S.}. In this paper we consider the range $\alpha \in (\frac{1}{2}, 1]$, so vacuum regions may appear and thus only existence of weak solutions is expected. 
In general, if we consider the $1D$ Navier-Stokes-Korteweg equations with viscosity coefficient $\mu(\rho)=\rho^{\alpha}$ and capillarity $\kappa(\rho)=\rho^{\beta}$ in \cite{Le Floch} almost optimal conditions for the validity of the BD Entropy are given. In particular, for the (QNS) the range is $\alpha\in[0,\frac{3}{2}]$. In this paper we assume $\alpha\in(\frac{1}{2},1]$, which corresponds to the condition of {\em Tame Capillarity} introduced in \cite{Le Floch}. This condition generalizes  the common condition in the theory of hyperbolic conservation laws that capillarity is dominated by the square of the viscosity to the case of solutions with vacuum, we refer to \cite{Le Floch} for more insights. In particular, the viscous effects are dominant in the dynamic and thus, together with the results in \cite{D.P.S.} where the range $\alpha\in[0,\frac{1}{2}]$ is considered, the present paper covers all the possible viscosity exponents for which the global existence can be obtained with the present methods.\\


Finally, we describe the state of art concerning the (QNS) system. For the existence results for (QNS) in the framework of weak solutions we refer to  \cite{Spirito2}, \cite{Lacroix} and to \cite{Spirito} for the case of Navier-Stokes-Korteweg with constant capillarity. We also recall the result in \cite{Jung qns} where a different notion of weak solutions is used. We also refer to \cite{Ant6} for the case with non-trivial far field behavior and to \cite{Burtea1} and \cite{Chen} for the one-dimensional case. Global existence of weak solutions has also been proved in the inviscid case,  in particular we mention \cite{Ant2}, \cite{Ant1}, \cite{Ant3}, \cite{Ant4} for the global existence of finite energy weak solutions and related results for the (QHD) system, and, for the general Euler-Korteweg system, we refer \cite{Benz} for the local existence of smooth solutions, to \cite{Aud} for small data well-posedness, and \cite{Feir. Don.} for some non-uniqueness results using convex integration. In the case of zero-capillarity coefficient, which corresponds to the compressible Navier-Stokes equations with degenerate viscosity, we recall that the global existence of weak solutions in three and two dimensions has been proved in \cite{Li Xin} and \cite{Vasseur Yu 0} for the case of linear viscosity and in \cite{Bresch} for some non-linear viscosity of power law type. Moreover, for the one-dimensional case concerning the global regularity of solutions in the case of degenerate viscosity several results are available, see \cite{Mellet, JX, Const, Const2, Haspot 2, Burtea2, Charve}.
 We stress however that none of these results concerns stochastic flows except of \cite{D.P.S.}.  Indeed regarding stochastically forced compressible fluid flows, most of the literature deal with the compressible Navier-Stokes equations in the case of constant viscosity \cite{Feir}, besides the results in \cite{Zatorska} where the authors proved sequential stability of weak martingale solutions for the compressible Navier-Stokes equations with degenerate linear viscosity.\\
 
 Finally, also some singular limits results has been obtained for the Quantum-Navier-Stokes equations and Korteweg fluids.  In particular we refer to \cite{Ant5} and \cite{Donatelli} for the low and high Mach number limit respectively,  in the case of linear density dependent viscosity and we mention \cite{Ant0} for high friction limit from the Quantum-Navier-Stokes equations to the Quantum drift-diffusion equations. \\

The outline of the paper is the following: in Section 2 we introduce the stochastic elements of our analysis by discussing the assumptions on the stochastic forcing term and on the random initial data.  We also recall some useful results concerning the stochastic calculus for SPDEs and stochastic compactness tools.  Section 3 is dedicated to the statement of the definition of weak dissipative martingale solutions and of our main result, Theorem \ref{main theorem weak}.  In Section 4 we introduce the approximating system for which we prove the global well-posedness in the framework of strong pathwise solutions which are strong solutions in both PDEs and Probability sense.  Section 5 is dedicated to the proof of Theorem \ref{main theorem weak} which is performed by introducing a suitable truncation of the velocity field $u$ in the momentum equations and by proving the convergence result by using a stochastic compactness argument.
\section{Preliminaries}
In this Section we fix the notation, we give a detailed description of the stochastic setting and we state the regularity assumptions on the stochastic forcing term and initial conditions.  We also recall some basic results in the framework of stochastic partial differential equations and stochastic compactness analysis.  The presented stochastic theory has been discussed in a very detailed way in the monograph \cite{Feir}, where the authors give an exhaustive overview of the stochastic elements needed to deal with stochastically forced compressible fluid flows.  For the reader's convenience we recall here the main tools used in our analysis.
\\
\subsection{Notation} 

In the sequel we assume the following notations and conventions:
\begin{enumerate}
\item
Let $\mathbb{T}$ be the one-dimensional flat torus,  we denote by $L^p(\mathbb{T})$ the standard Lebesgue spaces  and with $\| \cdot \|_{L^p}$ their norm. $W^{k,p}(\mathbb{T})$ denotes the Sobolev space of $L^p(\mathbb{T})$ functions with $k$ distributional derivatives in $L^p(\mathbb{T})$ and $H^k(\mathbb{T})$ corresponds to the case $p=2.$ For a given Banach space $X$ we consider the Bochner space for time dependent functions with values in $X$,  namely $C(0,T;X)$, $L^p(0,T;X).$
\item
Given two functions $F, G$ and a variable $p,$ we denote by $F \lesssim G$ and $F \lesssim_{p} G$ the existence of a generic constant $c>0$ and $c(p)>0$  such that $F \le cG$ and $F \le c(p)G$ respectively.
\item
Let $f$ be a given random variable,  we denote by $\mathfrak{L}[f]$ the law of $f$ and by $\mathfrak{L}_X[f]$ the law of $f$ on the space $X$.  $L^p_{\text{prog}}(\Omega \times [0,T])$ represents the Lebesgue space of functions that are measurable with respect to the $\sigma$-field of $(\mathfrak{F}_t)$-progressively measurable sets in $\Omega \times [0,T].$ For a  given stochastic process $U,$ $(\sigma_t [U])_{t \ge 0}$ represents the canonical filtration of $U.$
\end{enumerate}
\subsection{The stochastic setting and stochastic compactness tools}
Given $(\Omega, \mathfrak{F}, (\mathfrak{F}_t)_{t \ge 0}, \mathbb{P})$ a stochastic basis with right continuous filtration,  the initial data $(\rho_0, u_0)$ are $\mathfrak{F}_0$-measurable random variables ranging in a suitable regularity space.
The stochastic process $W$ is a cylindrical $(\mathfrak{F}_t)$-Wiener process in a separable Hilbert space $\mathfrak{U},$ namely 
\begin{equation}\label{Cyl W}
W(t)= \sum_{k=1}^{\infty} e_kW_k(t),
\end{equation} where $(W_k)_{k \in \mathbb{N}}$ is a sequence of mutually independent real valued Wiener processes relative to $(\mathfrak{F}_t)_{t \ge 0}$ and $(e_k)_{k \in \mathbb{N}}$ is an orthonormal basis of $\mathfrak{U}.$
The sum in \eqref{Cyl W} is not a priori converging in $\mathfrak{U}$,  hence we construct a larger space $\mathfrak{U}_0 \supset \mathfrak{U}$ as follows
\begin{equation}
\mathfrak{U}_0= \bigg\{ v= \sum_{k \ge 1}^{} c_k e_k; \quad \sum_{k \ge 1}^{} \dfrac{c^2_k}{k^2} < \infty \bigg\},
\end{equation}
endowed with the norm
\begin{equation*}
\| v \|^2_{\mathfrak{U}_0}= \sum_{k \ge 1}^{} \dfrac{c^2_k}{k^2}, \quad v= \sum_{k \ge 1}^{} c_k e_k.
\end{equation*}
The diffusion operator is defined by superposition $$\mathbb{G}(\rho, \rho u): \mathfrak{U} \rightarrow L^1(\mathbb{T}), \quad \mathbb{G}(\rho, \rho u )e_k= G_k(\cdot, \rho(\cdot), \rho u(\cdot)).$$
where the coefficients  $$G_k: \mathbb{T} \times [0, \infty) \times \mathbb{R} \rightarrow \mathbb{R}$$ are $C^s$-functions, for $s \in \mathbb{N}$ specified below, which satisfy the following growth assumptions uniformly in $x \in \mathbb{T}$ 
\begin{equation}\label{G1}
G_k(\cdot,0,0)=0,
\end{equation}
\begin{equation}\label{G w}
| G_k(x,\rho,q) | \le g_k (\rho+|q|),
\end{equation}
\begin{equation}\label{G2}
| \partial^l_{x,\rho,q} G_k(x, \rho,q) | \le g_k,  \quad \sum_{k=1}^\infty g_k < \infty \quad  \text{for all} \; l\in {1,...,s}.
\end{equation}
In the framework of weak solutions the momentum equations is satisfied only in the distributional sense.  Consequently,  by virtue of the Sobolev embedding $L^1(\mathbb{T}) \hookrightarrow W^{-l,2}(\mathbb{T}),$ provided that $l > \frac{1}{2},$ the stochastic integral has to be understood as a stochastic process in the Hilbert space $W^{-l,2}(\mathbb{T}).$ To be precise $\mathbb{G}(\rho,\rho u) \in L_2(\mathfrak{U}; W^{-l.2}(\mathbb{T}))$ the space of Hilbert-Schmidt operators form $\mathfrak{U}$ to $W^{-l.2}(\mathbb{T}).$ Indeed we have 
\begin{equation}\label{G W-}
\begin{split}
& \| \mathbb{G}(\rho, \rho u ) \|^2_{L_2(\mathfrak{U}; W_x^{-l,2})}= \sum_{k=1}^{\infty} \| G_k(\rho, \rho u ) \|^2_{W_x^{-l,2}} \le C \sum_{k=1}^{\infty} \| G_k( \rho, \rho u ) \|^2_{L^1_x} \\ & \lesssim \int_{\mathbb{T}} \bigg( \sum_{k=1}^{\infty} \rho^{-1} | G_k(x, \rho, \rho u ) |^2 \bigg) dx \lesssim \int_{\mathbb{T}} ( \rho+ \rho |u|^2) dx.
\end{split}
\end{equation}
Furthermore we observe that the right hand side of \eqref{G W-} is finite whenever we are dealing with finite energy solutions.  To be precise, if 
\begin{equation}\label{reg int stoch mart}
\begin{split}
& \rho \in L^\gamma_{prog}(\Omega \times (0,T); L^\gamma(\mathbb{T})), \quad \gamma \ge 1, \\ & \sqrt{\rho}u \in L^2_{prog}(\Omega \times (0,T); L^2(\mathbb{T})),
\end{split}
\end{equation}
then the stochastic integral is a well-defined $(\mathfrak{F}_t)$-martingale taking values in $W^{-l,2}(\mathbb{T}).$ Moreover we point out that the regularity \eqref{reg int stoch mart} can be weakened as follows $$ \rho \in L^\gamma(0,T); L^\gamma(\mathbb{T})) \quad \mathbb{P}-a.s. ,\quad  \sqrt{\rho}u \in L^2 (0,T; L^2(\mathbb{T})) \quad \mathbb{P}-a.s. $$
and in this case the stochastic integral is in general only a local martingale.
We refer the reader to \cite{Da Prato} for a complete overview on the stochastic setting and to \cite{Feir} for a detailed description of the stochastic elements used in order to study compressible fluid dynamical equations.
\\
\\
In order to be consistent with the deterministic case where the external force is usually of the form $G=\rho f,$ we will consider stochastic forcing terms of the form $G_k(x, \rho, \rho u )= \rho F_k(x,\rho, u)$ with regularity of $F_k$ determined by \eqref{G1}-\eqref{G2}.  Precisely $$F_k: \mathbb{T} \times [0,\infty) \times \mathbb{R} \rightarrow \mathbb{R}, \quad F_k \in C^s(\mathbb{T} \times (0,\infty) \times \mathbb{R})$$ satisfy
\begin{equation}\label{f1}
F_k(\cdot,0,0)=0,
\end{equation}
\begin{equation}\label{f2}
| \partial^l_{x,\rho,q} F_k(x, \rho,q) | \le \alpha_k,  \quad \sum_{k=1}^\infty \alpha_k < \infty \quad  \text{for all} \; l\in {1,...,s}.
\end{equation}
Our setting includes also the case of an additive noise $\sigma(x)\text{d}W.$ This leads to the study of a wide class of problems related to the existence and uniqueness of invariant measures which has been recently investigated for the compressible Navier-Stokes equation,  see \cite{Coti}. \\
We point out that random effects are considered also in the initial data.  The initial conditions $(\rho_0,u_0)$ are indeed $\mathfrak{F}_0$-measurable $H^{s+1}(\mathbb{T})\times H^{s}(\mathbb{T})$ valued random variables, with $s>7/2$, such that there exists a deterministic constant $C>0$ such that 
\begin{equation} \label{C.I STRONG}
 C^{-1} \le \rho_0 \le C,  \quad \rho_0 \in L^p(\Omega; H^{s+1}(\mathbb{T})),  \quad u_0 \in L^p(\Omega; H^s(\mathbb{T})),  \quad \text{for all} \; p \in [1,\infty).
\end{equation}
In the sequel we introduce some useful instruments which are needed in order to deal with the stochastic elements of our analysis.  We start by recalling two results concerning the It$\hat{\text{o}}$ stochastic calculus.
\begin{theorem}\label{Ito Lemma}
Let $W$ be an $\mathfrak{F}_t$-cylindrical Wiener process on the stochastic basis \newline $(\Omega, \mathfrak{F},(\mathfrak{F}_t
)_{t \ge 0}, \mathbb{P}).$
Let $(r,s)$ be a pair of stochastic processes on $(\Omega, \mathfrak{F},(\mathfrak{F}_t
)_{t \ge 0}, \mathbb{P})$ satisfying
\begin{equation}
\text{d}r=[Dr]\text{d}t+[\mathbb{D}r] \text{d}W, \quad \quad \text{d}s=[Ds]\text{d}t+[\mathbb{D}s] \text{d}W
\end{equation}
on the cylinder $(0,T) \times \mathbb{T}^d.$ Now suppose that the following 
\begin{equation}
r \in C^{\infty} ([0,T] \times \mathbb{T}^d), \quad \quad s \in C^{\infty} ([0,T] \times \mathbb{T}^d)
\end{equation}
holds $\mathbb{P}$-a.s. and that for all $1 \le q < \infty$
\begin{equation}
\mathbb{E} \bigg[ \sup_{t \in [0,T]} \| r \|^2_{W^{1,q}_x} \bigg]^q+\mathbb{E} \bigg[ \sup_{t \in [0,T]} \| s \|^2_{W^{1,q}_x} \bigg]^q \lesssim_q 1
\end{equation}
Furthermore,  assume that $[Dr], \; [Ds], \; [\mathbb{D}r], \; [\mathbb{D}s]$ are progressively measurable and that
\begin{equation}
\begin{split}
& [Dr], \; [Ds], \;  \in L^q(\Omega; L^q(0,T;W^{1,q}_x) \\ &
[\mathbb{D}r], \; [\mathbb{D}s] \in L^2( \Omega; L^2(0,T;L_2(\mathfrak{U};L^2_x) )
\end{split}
\end{equation}
and 
\begin{equation}
\bigg( \sum_{k \in \mathbb{N}} |[Dr](e_k)|^q\bigg)^{\frac{1}{q}}, \;\bigg( \sum_{k \in \mathbb{N}} |[Ds](e_k)|^q\bigg)^{\frac{1}{q}} \in L^q(\Omega \times (0,T) \times \mathbb{T}^d) 
\end{equation}
holds. Finally,  for some $\lambda \ge 0.$ let $Q$ be $(\lambda +2)$-continuously differentiable function such that 
\begin{equation}
\mathbb{E} \sup_{t \in [0,T]} \| Q^j(r) \|^2_{W^{\lambda,q'} \cap C_x } < \infty, \quad j=0,1,2.
\end{equation}
Then 
\begin{equation}
\begin{split}
& \int_{\mathbb{T}^d} (sQ(r))(t) dx= \int_{\mathbb{T}^d} (s_0Q(r_0))(t) dx+\int_{0}^{t} \int_{\mathbb{T}^d} \bigg[ sQ'(r)[Dr]+\frac{1}{2} \sum_{k \in \mathbb{N}} sQ''(r) | [\mathbb{D}r](e_k)|^2 \bigg] dxdt' \\ & +\int_{0}^{t} \int_{\mathbb{T}^d} Q(r)[Ds] dxdt'+\sum_{k\in\mathbb{N}} \int_{0}^{t} \int_{\mathbb{T}^d} [\mathbb{D}s](e_k)[\mathbb{D}r](e_k) dxdt' \\ & + \sum_{k\in \mathbb{N}} \int_{0}^{t} \int_{\mathbb{T}^d} \bigg[ sQ'(r)[\mathbb{D}r](e_k)+Q(r)[\mathbb{D}s](e_k)\bigg] dx dW_k(t').
\end{split}
\end{equation}
\end{theorem}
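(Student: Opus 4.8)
The plan is to reduce the infinite-dimensional identity to the classical finite-dimensional It\^o formula by means of a Fourier--Galerkin truncation, apply the latter coefficientwise, and then pass to the limit using the uniform bounds in the hypotheses; one could alternatively invoke the abstract It\^o formula in Hilbert spaces (see \cite{Da Prato}) applied to the functional $(r,s)\mapsto\int_{\mathbb{T}^d}sQ(r)\,\de x$, but the truncation argument keeps the regularity bookkeeping transparent and avoids appealing to a stochastic Fubini theorem. Concretely, for $N\in\N$ let $(\psi_n)_{n\ge1}$ be the trigonometric basis of $L^2(\mathbb{T}^d)$, let $P_N$ be the orthogonal projection onto $\mathrm{span}(\psi_1,\dots,\psi_N)$, and set $r^N:=P_Nr$, $s^N:=P_Ns$. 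Since $P_N$ commutes with the stochastic integral, $r^N$ and $s^N$ solve $\de r^N=P_N[Dr]\,\de t+P_N[\mathbb{D}r]\,\de W$ and the analogous equation, and take values in a fixed finite-dimensional space; writing $r^N=\sum_{n\le N}c_n\psi_n$, $s^N=\sum_{n\le N}d_n\psi_n$, the coefficients $(c_n),(d_n)$ are classical real It\^o processes and $F_N(t):=\int_{\mathbb{T}^d}s^NQ(r^N)\,\de x=G(c(t),d(t))$ for a smooth function $G$.

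Next I would apply the finite-dimensional It\^o formula to $G$. Using $\partial_{c_n}G=\int_{\mathbb{T}^d}s^NQ'(r^N)\psi_n\,\de x$, $\partial_{d_n}G=\int_{\mathbb{T}^d}Q(r^N)\psi_n\,\de x$, $\partial^2_{c_nc_m}G=\int_{\mathbb{T}^d}s^NQ''(r^N)\psi_n\psi_m\,\de x$, $\partial^2_{c_nd_m}G=\int_{\mathbb{T}^d}Q'(r^N)\psi_n\psi_m\,\de x$ and $\partial^2_{d_nd_m}G=0$, and resumming the four sums (so that, e.g., the drift contribution of the $c$-derivatives resums to $\int_{\mathbb{T}^d}s^NQ'(r^N)P_N[Dr]\,\de x$, and similarly for the martingale part), one obtains precisely the asserted identity with $(r,s,[Dr],[\mathbb{D}r],\dots)$ replaced by their $N$-truncated counterparts; in particular the mixed quadratic-variation term appears with the factor $Q'(r^N)$, and every term already comes out in the displayed order, so no interchange of integrals is required.

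It remains to let $N\to\infty$. Since $r,s\in C^\infty([0,T]\times\mathbb{T}^d)$ $\mathbb{P}$-a.s., one has $r^N\to r$, $s^N\to s$ in $C([0,T]\times\mathbb{T}^d)$ $\mathbb{P}$-a.s., $P_N[Dr]\to[Dr]$ and $P_N[\mathbb{D}r]\to[\mathbb{D}r]$ in the respective Bochner spaces, and, by continuity of $Q,Q',Q''$ together with the hypothesis $\E\sup_t\|Q^{(j)}(r)\|^2_{W^{\lambda,q'}\cap C_x}<\infty$, also $Q^{(j)}(r^N)\to Q^{(j)}(r)$ in $C_x$ for $j=0,1,2$. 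The bounded-variation terms then converge by dominated convergence, using $[Dr],[Ds]\in L^q(\Omega;L^q(0,T;W^{1,q}_x))$ and the $\ell^q$-summability of $([Dr](e_k)),([Ds](e_k))$ for the drift parts, and $[\mathbb{D}r],[\mathbb{D}s]\in L^2(\Omega;L^2(0,T;L_2(\mathfrak{U};L^2_x)))$ together with Cauchy--Schwarz in $k$ for the correction terms $\tfrac12\sum_ks^NQ''(r^N)|P_N[\mathbb{D}r](e_k)|^2$ and $\sum_kQ'(r^N)P_N[\mathbb{D}r](e_k)P_N[\mathbb{D}s](e_k)$. For the martingale terms one bounds the Hilbert--Schmidt norm of the difference of the integrands, e.g. by $\|P_N[\mathbb{D}r]-[\mathbb{D}r]\|^2_{L_2(\mathfrak{U};L^2_x)}+\|s^NQ'(r^N)-sQ'(r)\|_{C_x}^2\,\|[\mathbb{D}r]\|^2_{L_2(\mathfrak{U};L^2_x)}$, which tends to $0$ in $L^1(\Omega\times(0,T))$ by the stated moment bounds; the It\^o isometry and the Burkholder--Davis--Gundy inequality then give convergence of the truncated stochastic integrals, along a subsequence, in $L^2(\Omega;C([0,T]))$, which identifies the limit.

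I expect this last step --- the convergence of the truncated martingales --- to be the main obstacle. The underlying difficulty is that $(r,s)\mapsto\int_{\mathbb{T}^d}sQ(r)\,\de x$ is not twice Fr\'echet differentiable on $L^2(\mathbb{T}^d)\times L^2(\mathbb{T}^d)$, since $Q'(r)$ and $Q''(r)$ are controlled only through sup-norms of $r$; consequently the $C_x$- and $W^{\lambda,q'}_x$-regularity of $r,s$, hence of $Q^{(j)}(r)$, has to be propagated through every estimate and combined with the Hilbert--Schmidt summability of $[\mathbb{D}r],[\mathbb{D}s]$ and the stated moment bounds. Once this is arranged, the finite-dimensional It\^o formula and the dominated-convergence passages in the bounded-variation part are routine.
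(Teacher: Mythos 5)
The paper offers no proof of this statement to compare against: Theorem \ref{Ito Lemma} is recalled as a known tool from the monograph \cite{Feir} (it is the generalized It\^o formula of Breit--Feireisl--Hofmanov\'a), and the surrounding text makes clear it is being quoted, not proved. Judged on its own terms, your Fourier--Galerkin reduction is a legitimate and essentially complete route, and it genuinely differs from the argument in the cited source, which applies the scalar It\^o formula to $t\mapsto s(x,t)Q(r(x,t))$ pointwise in $x$ (after spatial regularization), integrates over $\mathbb{T}^d$, and interchanges the $dx$-integral with the stochastic integral via a stochastic Fubini theorem. Your truncation trades that interchange for a convergence analysis of the projected processes, which is a fair exchange given the assumed pathwise smoothness of $r$ and $s$; the finite-dimensional It\^o step, the resummation via Parseval, and the limit passage in the bounded-variation and martingale parts are all correctly organized.

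Two points need repair. First, your computation produces the cross-variation term with the factor $Q'$, namely $\sum_k\int_{\mathbb{T}^d}Q'(r)[\mathbb{D}r](e_k)[\mathbb{D}s](e_k)\,dx$, which is the correct term (it comes from $d\langle s,Q(r)\rangle=Q'(r)\,d\langle s,r\rangle$); the identity displayed in the statement omits $Q'(r)$. You cannot simultaneously note that the mixed term ``appears with the factor $Q'(r^N)$'' and claim to obtain ``precisely the asserted identity'': the printed statement contains a transcription error relative to the source, and your formula is the right one --- say so explicitly rather than papering over the mismatch. Second, a minor technical gap in dimension $d\ge 2$: the convergences $P_Nr\to r$ in $C([0,T]\times\mathbb{T}^d)$ and $P_N[Dr]\to[Dr]$ in $L^q_{t,x}$ depend on which partial sums you take (cubic partial sums converge in $L^q$ for $1<q<\infty$ by iterating the one-dimensional Riesz theorem, and uniformly for smooth functions, whereas spherical partial sums fail in $L^q$ for $q\ne 2$), so the truncation should be specified accordingly. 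With these two adjustments the argument is sound.
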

\noindent
\begin{theorem}(It$\hat{\text{o}}$'s formula) \\
Let $W$ be a cylindrical Wiener process in a separable Hilbert space $\mathfrak{U}.$ Let $\mathbb{G}$ be an $L_{2}(\mathfrak{U},H)$-valued stochastically integrable process, let $g$ be an $H$-valued progressively measurable Bochner integrable process, and let $U(0)$ be a $\mathfrak{F}_0$-measurable $H$-valued random variable so that the process $$ U(t)= U(0)+ \int_{0}^{t} g(s)ds +\int_{0}^{t} \mathbb{G}(s) dW(s)$$ is well defined.
Let $F: \, H \longrightarrow \mathbb{R}$ be a function such that $F, \, F',\, F''$ are uniformly continuous on bounded subset of $H.$ Then the following holds $\mathbb{P}$-a.s. for all $t \in [0,T]$
\begin{equation}
\begin{split}
& F(U(t))=F(U(0))+ \int_{0}^{t} \langle F'(U(s)),g(s) \rangle ds+ \int_{0}^{t} \langle F'(U(s)),G(s) dW(s) \rangle \\ & 
+\dfrac{1}{2} \int_{0}^{t} \text{Tr}(G(s)^*F{''}(U(s))G(s))ds
\end{split}
\end{equation}
where $\text{Tr} A= \sum_{k=1}^{\infty} \langle A e_k,e_k \rangle$ for $A$ being a bounded linear operator on H.
\end{theorem}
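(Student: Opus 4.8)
The statement is the standard infinite-dimensional It\^o formula (see \cite{Da Prato}); I would prove it by reducing to the classical finite-dimensional It\^o formula through finite-rank projections, preceded by a localisation. \emph{Localisation.} For $N\in\mathbb{N}$ set
\[
\tau_N=\inf\Big\{t\in[0,T]:\ \|U(t)\|_H+\int_0^t\|g(s)\|_H\,\mathrm{d}s+\int_0^t\|\mathbb{G}(s)\|^2_{L_2(\mathfrak{U},H)}\,\mathrm{d}s>N\Big\}\wedge T .
\]
These stopping times increase to $T$ $\mathbb{P}$-a.s., and on $[0,\tau_N]$ the process $U$ stays in the ball $\overline{B}_N\subset H$. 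Since $F,F',F''$ are uniformly continuous on $\overline{B}_N$, one may replace $F$ by a function coinciding with it on $\overline{B}_N$ and belonging to $C^2_b(H)$ with bounded, uniformly continuous first and second derivatives on all of $H$. Hence it suffices to prove the identity when $F,F',F''$ are bounded and uniformly continuous and $\mathbb{E}\int_0^T(\|g\|_H+\|\mathbb{G}\|^2_{L_2})\,\mathrm{d}t<\infty$; the general case then follows by letting $N\to\infty$, all the integrals being a.s.\ continuous in the upper limit.

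\emph{Finite-dimensional reduction.} Fix an orthonormal basis $\{h_j\}_{j\ge1}$ of $H$, let $\Pi_n$ be the orthogonal projection onto $H_n=\mathrm{span}\{h_1,\dots,h_n\}$, and put $U_n=\Pi_nU$, so that $U_n(t)=U_n(0)+\int_0^t\Pi_ng\,\mathrm{d}s+\int_0^t\Pi_n\mathbb{G}\,\mathrm{d}W$ is an It\^o process in $H_n\cong\mathbb{R}^n$ driven by the scalar Wiener processes $W_k$. The restriction $F|_{H_n}$ has gradient $\Pi_nF'(\cdot)$ and Hessian $\Pi_nF''(\cdot)\Pi_n$, so the classical finite-dimensional It\^o formula, together with the self-adjointness of $\Pi_n$, gives
\[
\begin{aligned}
F(U_n(t))={}&F(U_n(0))+\int_0^t\langle F'(U_n),\Pi_ng\rangle\,\mathrm{d}s+\int_0^t\langle F'(U_n),\Pi_n\mathbb{G}\,\mathrm{d}W\rangle\\
&+\tfrac12\int_0^t\sum_{k}\big\langle F''(U_n)\Pi_n\mathbb{G}e_k,\Pi_n\mathbb{G}e_k\big\rangle\,\mathrm{d}s .
\end{aligned}
\]

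\emph{Passage to the limit.} Since $\Pi_n\to\mathrm{Id}$ strongly on $H$ and in the Hilbert--Schmidt norm on $L_2(\mathfrak{U},H)$, one has $U_n(t)\to U(t)$ in $H$ $\mathbb{P}$-a.s.\ for every $t$, $\Pi_ng\to g$ a.e.\ in $(\omega,t)$, and $\Pi_n\mathbb{G}\to\mathbb{G}$ in $L_2(\mathfrak{U},H)$ a.e. Boundedness and uniform continuity of $F,F',F''$ together with dominated convergence pass the limit through the left-hand side, the initial term and the drift integral; writing $F''(U_n)\Pi_n\mathbb{G}e_k-F''(U)\mathbb{G}e_k$ as $[F''(U_n)-F''(U)]\Pi_n\mathbb{G}e_k+F''(U)(\Pi_n\mathbb{G}-\mathbb{G})e_k$ handles the trace term. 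For the stochastic integral, It\^o's isometry yields
\[
\mathbb{E}\Big|\int_0^t\langle F'(U_n),\Pi_n\mathbb{G}\,\mathrm{d}W\rangle-\int_0^t\langle F'(U),\mathbb{G}\,\mathrm{d}W\rangle\Big|^2\lesssim\mathbb{E}\int_0^T\Big(\|F'(U_n)-F'(U)\|_H^2\|\mathbb{G}\|^2_{L_2}+\|F'\|_\infty^2\|\Pi_n\mathbb{G}-\mathbb{G}\|^2_{L_2}\Big)\,\mathrm{d}t\ \longrightarrow\ 0,
\]
so these stochastic integrals converge in $L^2(\Omega)$, hence a.s.\ along a subsequence. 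Taking $n\to\infty$ in the identity for $U_n$ gives the formula for the localised, truncated problem, and letting $N\to\infty$ concludes.

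I expect the delicate points to be the construction of the $C^2_b(H)$ truncation of $F$ that genuinely preserves uniform continuity of the second derivative, and the passage to the limit in the infinite sums defining the It\^o correction and the stochastic integral, which requires the Hilbert--Schmidt (rather than merely strong) convergence $\Pi_n\mathbb{G}\to\mathbb{G}$ in order to control those sums uniformly; the remaining estimates are routine dominated-convergence bookkeeping.
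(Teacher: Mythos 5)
The paper offers no proof of this statement: it is recalled verbatim in the preliminaries as a classical tool, with the reader referred to \cite{Da Prato} and \cite{Feir}, so there is nothing internal to compare your argument against. Judged on its own, your sketch is a correct and standard route. It does differ from the proof in the cited reference of Da Prato--Zabczyk, which does not project the state space: there one first establishes the formula for elementary (piecewise constant, finite-rank) integrands $g$ and $\mathbb{G}$ by Taylor-expanding $F$ over a partition and controlling the remainder via uniform continuity of $F''$, and then approximates general integrands by elementary ones in the relevant $L^2$ norms. Your Galerkin-type reduction buys a cleaner appeal to the finite-dimensional It\^o formula at the price of two extra approximation layers that you should make explicit: (i) even after projecting onto $H_n$, the process $U_n$ is driven by infinitely many independent Brownian motions $W_k$, so before invoking the ``classical'' formula you must also truncate the noise to $\mathrm{span}\{e_1,\dots,e_m\}$ in $\mathfrak{U}$ and pass $m\to\infty$ first (harmless, since $\sum_k\|\Pi_n\mathbb{G}e_k\|^2\le\|\mathbb{G}\|^2_{L_2}$); and (ii) the $C^2_b$ cut-off of $F$ can be taken concretely as $F(x)\,\chi(\|x\|^2/N^2)$ with $\chi$ a smooth scalar cut-off, using that $\|\cdot\|^2$ is smooth on $H$ and that uniform continuity of $F,F',F''$ on bounded sets implies their boundedness there, so this point is not actually delicate. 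Finally, your $L^2(\Omega)$ convergence of the stochastic integrals is for fixed $t$; to get the identity $\mathbb{P}$-a.s.\ simultaneously for all $t$ you should either upgrade to uniform convergence via the Burkholder--Davis--Gundy inequality (Proposition \ref{BDG}) or use continuity in $t$ of both sides together with a countable dense set of times. With those three points filled in, the argument is complete.
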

The next theorem is an important tool needed in order to perform a stochastic compactness argument.  We introduce it in the case of sub-Polish spaces which is appropriate when dealing weak convergences and low regularity as in the case of weak solutions.
\begin{theorem}(Jakubowski-Skorokhod representation theorem)\label{Jak}.
Let $(X, \tau)$ be a sub-Polish space and let $\mathcal{S}$ be the $\sigma-$ field generated by $\{f_n: n \in \mathbb{N} \}.$ If $(\mu_n)_{n \in \mathbb{N}}$ is a tight sequence of probability measures on $(X, \tau),$ then there exists a subsequence $(n_k)$ and $X-$ valued Borel measurable random variables $(U_k)_{k \in \mathbb{N}}$ and $U$ defined on the standard probability space $([0.1], \overline{{\mathcal{B}}([0,1])}, \mathcal{L}),$ such that $\mu_{n_k}$ is the law of $U_k$ and $U_k(\omega)$ converges to $U(\omega)$ in $X$ for every $\omega \in [0,1].$ Moreover, the law of $U$ is a Radon measure.
\end{theorem}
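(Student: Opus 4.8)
The final statement is the Jakubowski–Skorokhod theorem, a classical compactness tool; recall that sub-Polishness of $(X,\tau)$ means exactly that the functions $f_n$ generating $\mathcal{S}$ are continuous and separate the points of $X$. The plan is to deduce the theorem from the classical metric Skorokhod representation theorem by realizing $X$ inside a compact metric space. After the harmless renormalization $f_n\mapsto\tfrac{2}{\pi}\arctan f_n$ we obtain a continuous injection $\iota=(\tfrac{2}{\pi}\arctan f_n)_{n\in\mathbb{N}}\colon X\to\mathbb{M}:=[-1,1]^{\mathbb{N}}$, where $\mathbb{M}$ is compact, metrizable, hence Polish; moreover $\mathcal{S}$ is precisely the $\sigma$-field making $\iota$ measurable, so the push-forwards $\nu_n:=\iota_{*}\mu_n$ are well-defined Borel probability measures on $\mathbb{M}$. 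Since $\mathbb{M}$ is compact, $\{\nu_n\}$ is automatically tight, so by Prokhorov's theorem there is a subsequence, indexed by $n_k$, with $\nu_{n_k}\weakto\nu$ weakly on $\mathbb{M}$ for some Borel probability $\nu$.

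Next I would exploit the tightness of $\{\mu_n\}$ on $(X,\tau)$: it provides an increasing sequence of compacts $K_j\subseteq X$ with $\sup_n\mu_n(X\setminus K_j)<1/j$. For each $j$ the restriction $\iota|_{K_j}$ is a continuous bijection from a compact space onto its image, hence a homeomorphism onto the compact — in particular closed — set $\iota(K_j)\subseteq\mathbb{M}$; write $\mathbb{X}:=\bigcup_j\iota(K_j)$, a $\sigma$-compact Borel subset of $\iota(X)$ on which $\iota^{-1}$ is a well-defined Borel map to $X$ that restricts to a homeomorphism on each $\iota(K_j)$. Since $\iota(K_j)$ is closed, the Portmanteau theorem gives $\nu(\iota(K_j))\ge\limsup_k\nu_{n_k}(\iota(K_j))\ge\limsup_k\mu_{n_k}(K_j)\ge 1-1/j$, so $\nu(\mathbb{X})=1$, and trivially $\nu_n(\mathbb{X})=1$ for all $n$. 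Applying the classical Skorokhod representation theorem on the Polish space $\mathbb{M}$ yields $\mathbb{M}$-valued random variables $Z_k$ (law $\nu_{n_k}$) and $Z$ (law $\nu$) on $([0,1],\overline{\mathcal{B}([0,1])},\mathcal{L})$ with $Z_k(\omega)\to Z(\omega)$ in $\mathbb{M}$ for every $\omega$; by the concentration just shown, off an $\mathcal{L}$-null set $Z(\omega)$ and all the $Z_k(\omega)$ lie in $\mathbb{X}$, so one defines the $X$-valued Borel random variables $U:=\iota^{-1}\circ Z$ and $U_k:=\iota^{-1}\circ Z_k$. That these have the prescribed laws is routine — since $\iota$ is injective and a Borel isomorphism on each $K_j$, one checks $\mathbb{P}(U_k\in B)=\mathbb{P}(Z_k\in\iota(B\cap\bigcup_j K_j))=\mu_{n_k}(B)$ for Borel $B\subseteq X$ and likewise $\mathfrak{L}_X[U]=\nu\circ\iota$, which is Radon because $U$ is a.s. valued in the $\sigma$-compact set $\bigcup_j K_j$.

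The one genuinely delicate step — and the one I expect to be the main obstacle — is upgrading the pointwise convergence $Z_k(\omega)\to Z(\omega)$ in $\mathbb{M}$ to pointwise convergence $U_k(\omega)\to U(\omega)$ in the possibly non-metrizable topology $\tau$, since $\iota^{-1}$ need not be $\tau$-continuous on all of $\mathbb{X}$. This works as soon as, for $\mathcal{L}$-a.e. $\omega$, the tail of $(U_k(\omega))_k$ together with $U(\omega)$ lies in a single compact $K_{j(\omega)}$: on that compact $\iota$ is a homeomorphism, so $\mathbb{M}$-convergence transfers to $\tau$-convergence — equivalently, if $U_k(\omega)\not\to U(\omega)$ one finds a $\tau$-open $V\ni U(\omega)$ missed along a subsequence inside $K_{j(\omega)}$, passes to a $\tau$-convergent subnet $U_{k_{l_\alpha}}(\omega)\to x^{\ast}\in K_{j(\omega)}$, and uses continuity and injectivity of $\iota$ to force $x^{\ast}=U(\omega)\in V$, a contradiction. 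Securing this "tail trapped in one compact" property is where care is needed, because the $Z_k$ form a double array and the naive bound $\mathbb{P}(Z_k\notin\iota(K_j))<1/j$ is not summable in $k$; the way around it is to build the representation by hand rather than invoking the metric Skorokhod theorem only once — split $[0,1]$ according to which shell $K_j\setminus K_{j-1}$ the variable lands in, apply the metric Skorokhod theorem on each compact metric space $\iota(K_j)$ separately, and arrange these "level pieces" to be nested in the index $k$ so that each fixed $\omega$ eventually falls in the piece attached to a fixed $K_{j(\omega)}$. The bookkeeping making this simultaneously law-preserving for every $k$ and pointwise $\tau$-convergent is the technical heart; the embedding, Prokhorov, Portmanteau and Radonness steps above are routine.
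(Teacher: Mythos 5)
The paper does not prove this statement: Theorem \ref{Jak} is quoted as a known tool (Jakubowski's representation theorem, proved e.g.\ in the monograph \cite{Feir} and in Jakubowski's original article), so there is no in-paper argument to compare yours against. Judged on its own terms, your proposal follows the standard architecture of the classical proof: embed $X$ continuously and injectively into the compact metrizable cube $[-1,1]^{\mathbb N}$ via the separating family $f_n$, push the measures forward, extract a weakly convergent subsequence by Prokhorov, use tightness and Portmanteau to concentrate everything on the $\sigma$-compact set $\bigcup_j \iota(K_j)$ on which $\iota$ is a piecewise homeomorphism, and pull a Skorokhod representation back. The Radonness claim and the identification of the laws are handled correctly.

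However, there is a genuine gap at precisely the step you yourself flag as the technical heart. A single application of the metric Skorokhod theorem on $\mathbb M$ gives $Z_k(\omega)\to Z(\omega)$ in $\mathbb M$, but this does \emph{not} yield $U_k(\omega)\to U(\omega)$ in $\tau$, because $\iota^{-1}$ is only continuous on each $\iota(K_j)$ separately and nothing forces the tail of $(Z_k(\omega))_k$ into a single $\iota(K_{j(\omega)})$; as you note, $\mathbb P(Z_k\notin\iota(K_j))<1/j$ is not summable in $k$, so Borel--Cantelli is unavailable. Your proposed repair --- partition $[0,1]$ according to the shells $K_j\setminus K_{j-1}$, apply the metric Skorokhod theorem on each compact $\iota(K_j)$ separately, and nest the level pieces in $k$ --- is the right idea and is essentially how Jakubowski's proof proceeds, but you do not carry it out, and it is not routine: one must first pass to a further (diagonal) subsequence along which, for every $j$, the restricted measures $\nu_{n_k}(\,\cdot\,\cap\iota(K_j))$ converge weakly with converging total masses, and then construct the variables on the interval so that each is simultaneously law-preserving for its own $\mu_{n_k}$ and eventually trapped, for a.e.\ $\omega$, in one fixed compact together with the limit. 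Until that construction is written out, the pointwise $\tau$-convergence $U_k(\omega)\to U(\omega)$ --- which is the entire content of the theorem beyond Prokhorov --- is asserted rather than proved.
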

The following Proposition is a key result used in order to estimate stochastic integrals.  
\begin{proposition}(Burkholder-Davis-Gundy's inequality) \label{BDG}\\
Let $H$ be a separable Hilbert space.  Let $p \in (0, \infty).$ There exists a constant $C_p>0$ such that,  for every $(\mathfrak{F}_t)$-progressively measurable stochastic process $\mathbb{G}$ satisfying 
\begin{equation}
\mathbb{E} \int_{0}^{T} \| \mathbb{G}(t) \|^2_{L_{2}(\mathfrak{U},H)}dt \ < \infty,
\end{equation}
the following holds:
\begin{equation}
\mathbb{E} \sup_{ t \in [0,T]} \bigg{ \| } \int_{0}^{t} \mathbb{G}(s)dW(s) \bigg{ \| }^p_H \le C_p \mathbb{E} \bigg( \int_{0}^{T} \| \mathbb{G}(s) \|^2_{L_{2}(\mathfrak{U},H)} ds \bigg)^{\frac{p}{2}}.
\end{equation}
\end{proposition}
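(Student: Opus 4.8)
The plan is to reduce the statement to the classical scalar Burkholder--Davis--Gundy inequality by applying It\^o's formula to the square of the $H$-norm of the stochastic integral. Write $M(t):=\int_0^t\mathbb{G}(s)\,\mathrm{d}W(s)$; under the assumption $\mathbb{E}\int_0^T\|\mathbb{G}(t)\|^2_{L_2(\mathfrak{U},H)}\,\mathrm{d}t<\infty$ this defines a continuous, square--integrable $(\mathfrak{F}_t)$-martingale with values in $H$. Applying It\^o's formula with $F(x)=\|x\|_H^2$ (whose derivatives $F'(x)=2x$ and $F''(x)=2\,\mathrm{Id}$ are uniformly continuous on bounded subsets of $H$) one obtains
\[
\|M(t)\|_H^2=\mathcal{M}(t)+\int_0^t\|\mathbb{G}(s)\|^2_{L_2(\mathfrak{U},H)}\,\mathrm{d}s,\qquad \mathcal{M}(t):=2\int_0^t\langle M(s),\mathbb{G}(s)\,\mathrm{d}W(s)\rangle,
\]
where $\mathcal{M}$ is a real--valued continuous local martingale with $\mathcal{M}(0)=0$ and quadratic variation $[\mathcal{M}]_t=4\int_0^t\|\mathbb{G}(s)^*M(s)\|^2_{\mathfrak{U}}\,\mathrm{d}s\le 4\big(\sup_{s\le t}\|M(s)\|_H^2\big)\int_0^T\|\mathbb{G}(s)\|^2_{L_2(\mathfrak{U},H)}\,\mathrm{d}s$.

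For $0<p<2$ I would argue by domination. The identity above shows that the nonnegative continuous process $t\mapsto\|M(t)\|_H^2$ is Lenglart--dominated by the nondecreasing (predictable) process $A_t:=\int_0^t\|\mathbb{G}(s)\|^2_{L_2(\mathfrak{U},H)}\,\mathrm{d}s$: for every bounded stopping time $\tau$ one has $\mathbb{E}\|M(\tau)\|_H^2\le\mathbb{E}A_\tau$, by optional stopping along a localizing sequence for $\mathcal{M}$ together with Fatou's lemma. Lenglart's inequality with exponent $p/2\in(0,1)$ then gives $\mathbb{E}\sup_{t\le T}\|M(t)\|_H^p=\mathbb{E}\sup_{t\le T}\big(\|M(t)\|_H^2\big)^{p/2}\le C_p\,\mathbb{E}A_T^{p/2}$, which is the assertion. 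The borderline case $p=2$ is instead immediate from Doob's maximal inequality applied to the submartingale $\|M\|_H^2$ and the It\^o isometry.

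For $p>2$ I would use the scalar BDG inequality on $\mathcal{M}$ combined with an absorption argument. From the identity above, $\sup_{t\le T}\|M(t)\|_H^p\le 2^{p/2-1}\big((\sup_{t\le T}|\mathcal{M}(t)|)^{p/2}+A_T^{p/2}\big)$; scalar BDG yields $\mathbb{E}(\sup_{t\le T}|\mathcal{M}(t)|)^{p/2}\le C_p\,\mathbb{E}[\mathcal{M}]_T^{p/4}$, and inserting the bound on $[\mathcal{M}]_T$ and then using the Cauchy--Schwarz and Young inequalities produces, on the right--hand side, a term $\varepsilon\,\mathbb{E}\sup_{t\le T}\|M(t)\|_H^p$ plus a constant multiple of $\mathbb{E}A_T^{p/2}$. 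Choosing $\varepsilon$ small enough and absorbing the first term on the left yields the claim.

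The point requiring the most care is precisely this absorption, which is legitimate only once $\mathbb{E}\sup_{t\le T}\|M(t)\|_H^p$ is known to be finite a priori. This is dealt with in the usual way: one first carries out the argument with $M$ replaced by the stopped process $M(\cdot\wedge\tau_n)$, where $\tau_n:=\inf\{t\ge 0:\|M(t)\|_H\ge n\}$, for which all the quantities involved are finite, obtains a bound independent of $n$, and then lets $n\to\infty$ by monotone convergence. Alternatively, the whole Proposition may simply be quoted from \cite{Da Prato} or from the references collected in \cite{Feir}; the argument above is recorded only for completeness.
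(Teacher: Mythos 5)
The paper does not prove Proposition \ref{BDG} at all: it is recalled as a standard tool from the literature (cf.\ \cite{Da Prato} and the monograph \cite{Feir}), so there is no argument in the text to compare yours against. Your sketch is the standard textbook proof of the Hilbert-space BDG inequality and it is correct: the It\^o expansion of $\|M(t)\|_H^2$ is right (the trace term is exactly $\|\mathbb{G}\|^2_{L_2(\mathfrak{U},H)}$), the bound on $[\mathcal{M}]_t$ uses correctly that the operator norm of $\mathbb{G}(s)^*$ is dominated by the Hilbert--Schmidt norm, Lenglart domination handles $p<2$ (with $A$ continuous adapted increasing, hence predictable, as required), Doob plus the It\^o isometry handles $p=2$, and the absorption argument for $p>2$ is legitimate precisely because you localize with $\tau_n=\inf\{t:\|M(t)\|_H\ge n\}$ before absorbing and pass to the limit by monotone convergence. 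The one point worth stating explicitly if this were written out in full is that for $p>2$ one should also verify that $\mathbb{E}\sup_{t\le T\wedge\tau_n}\|M(t)\|_H^p<\infty$ for each fixed $n$ (it is, since the stopped process is bounded by $n$ up to a single jump-free continuous excursion, i.e.\ $\|M(t\wedge\tau_n)\|_H\le n$ by continuity), so the quantity being absorbed is genuinely finite. Given that the paper only quotes the result, either your self-contained argument or a citation to \cite{Da Prato}/\cite{Feir} is acceptable; yours has the merit of showing where the constant $C_p$ comes from and why no moment assumptions beyond square integrability of $\mathbb{G}$ are needed.
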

We conclude this section by recalling two results concerning the equality in law of solutions of stochastic partial differential equations and the convergence of the stochastic integral.  These results are useful in order to identify the limit process of our approximation technique as a weak solutions of \eqref{stoc quantum}-\eqref{C.I}.
\begin{theorem}\label{eq in law equation}
Let $U \in L^1_{loc}(\mathbb{R}; L^1(\mathbb{T}^d)) \; \mathbb{P}-a.s.,$ $U(t)=U_0$ for $t \le 0,$ be a random distribution satisfying 
\begin{equation}
\text{d}D(U)+ \operatorname{div}F(U)\text{d}t=G(U)\text{d}W, \quad D(U_0)=D_0
\end{equation}
with the following regularity assumptions
\begin{equation}
D(U), \; F(U), G_k(U) \in L^1_{loc}(\mathbb{R}; L^1(\mathbb{T}^d)), \; \mathbb{P}-a.s.
\end{equation}
and
\begin{equation}
\int_{0}^{T} \sum_{k=1}^{\infty} | \langle G_k(U), \psi \rangle |^2 dt < \infty, \quad \mathbb{P}-a.s.
\end{equation}
meaning $(\langle G_k(U), \psi \rangle )_{k \in \mathbb{N}} \in L^2(0,T;L_2(\mathfrak{U}; \mathbb{R})), \; \mathbb{P}-a.s. \; \text{for any} \; \psi \in C^\infty(\mathbb{T}^d).$ \\
Let $W=(W_k)_{k\in \mathbb{N}}$ be a cylindrical Wiener process.  Suppose that the filtration $$\mathfrak{F}_t= \sigma\bigg( \sigma_t[U] \cup \bigcup_{k=1}^{\infty} \sigma_t[W_k] \bigg), \; t \ge 0,$$ is non-anticipative with respect to $W.$ Let $\tilde{U}$ be another random distribution and $\tilde{W}$ another stochastic process such that their joint laws coincide,  namely 
$$\mathcal{L}[U,W]=\mathcal{L}[\tilde{U},\tilde{W}] \quad \text{or} \quad [U,W] \overset{d}{\sim}[\tilde{U}, \tilde{W}].$$ Then $\tilde{W}$ is a cylindrical Wiener process, the filtration $$\tilde{\mathfrak{F}}_t= \sigma\bigg( \sigma_t[\tilde{U}] \cup \bigcup_{k=1}^{\infty} \sigma_t[\tilde{W}_k] \bigg), \; t \ge 0,$$ is non-anticipative with respect to $\tilde{W},$ and 
\begin{equation}
\begin{split}
& \mathcal{L}_{\mathbb{R}} \bigg[ \int_{0}^{T} [ \partial_t \psi \langle D(U), \psi \rangle + \psi \langle F(U), \nabla \psi \rangle ] dt + \int_{0}^{T} \psi \langle G(U), \psi \rangle dW + \psi(0) \langle D(U_0) \psi \rangle \bigg]= \\ & \mathcal{L}_{\mathbb{R}} \bigg[ \int_{0}^{T} [ \partial_t \psi \langle D(\tilde{U}), \psi \rangle + \psi \langle F(\tilde{U}), \nabla \psi \rangle ] dt + \int_{0}^{T} \psi \langle G(\tilde{U}), \psi \rangle dW + \psi(0) \langle D(\tilde{U}_0) \psi \rangle \bigg]
\end{split}
\end{equation}
for any deterministic $\psi \in C^{\infty}_c ([0,T)\times \mathbb{T}^d).$
\end{theorem}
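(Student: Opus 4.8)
The plan is to prove this in the way such ``equality in law of the equation'' statements are handled in the Breit--Feireisl--Hofmanov\'a framework: reduce everything to the fact that, once the non-anticipativity of the filtration is known to survive the change of probability space, the It\^o stochastic integral is a measurable functional whose distribution is determined by the joint law of its integrand and of the noise. First I would verify that $\tilde W$ is a cylindrical Wiener process relative to $\tilde{\mathfrak F}_t$: equality of the finite-dimensional distributions of $(W_k)_k$ and $(\tilde W_k)_k$ forces each $\tilde W_k$ to be a real Brownian motion and the family to be mutually independent, so $\tilde W=\sum_k e_k\tilde W_k$ is cylindrical Wiener in $\mathfrak U$. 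That $\tilde W(t)-\tilde W(s)$ is independent of $\tilde{\mathfrak F}_s$, and that $\tilde{\mathfrak F}_t$ is non-anticipative with respect to $\tilde W$, are conditional-independence statements that can be written through characteristic functions evaluated on finitely many time-samples of $\tilde U$ and $\tilde W$ at times $\le s$; since the corresponding statements hold for $(U,W)$ by hypothesis and $\mathcal L[U,W]=\mathcal L[\tilde U,\tilde W]$, they transfer verbatim. The identical argument shows that $G_k(\tilde U)$ is $\tilde{\mathfrak F}_t$-progressively measurable.

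The core step is to show that the stochastic term is law-determined. Fix $\psi\in C^\infty_c([0,T)\times\mathbb T^d)$ and set $\mathbb G(U):=(\langle G_k(U),\psi\rangle)_k$, which by the regularity hypotheses lies in $L^2(0,T;L_2(\mathfrak U;\mathbb R))$ $\mathbb P$-a.s.; after a truncation and an $L^2$-in-time approximation, justified by \eqref{G1}--\eqref{G2} and \eqref{reg int stoch mart} and not carried out here, it suffices to treat bounded, $\mathbb P$-a.s.\ continuous integrands. For the dyadic partitions $\{jT/n\}_j$ I would introduce the Riemann-sum functionals
\[
I_n(\mathbb G(U),W):=\sum_{j}\mathbb G(U)(jT/n)\bigl(W((j+1)T/n\wedge\cdot)-W(jT/n\wedge\cdot)\bigr),
\]
which are genuine Borel functionals of finitely many time-evaluations of their two arguments; hence $\mathcal L[I_n(\mathbb G(U),W)]=\mathcal L[I_n(\mathbb G(\tilde U),\tilde W)]$ for every $n$. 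On the original space $I_n(\mathbb G(U),W)\to\int_0^\cdot\mathbb G(U)\,\mathrm dW$ in probability, uniformly on $[0,T]$. The quantities $\mathbb E[1\wedge\sup_t|I_n-I_m|]$ are expectations of bounded continuous functionals of those same samples, so they depend only on the joint law and thus equal their counterparts on the original space, which tend to $0$; therefore $I_n(\mathbb G(\tilde U),\tilde W)$ is Cauchy in $\tilde{\mathbb P}$-probability, converges to some $J$, and passing to the limit gives $\mathcal L[\int_0^\cdot\mathbb G(U)\,\mathrm dW]=\mathcal L[J]$. Finally $J=\int_0^\cdot\mathbb G(\tilde U)\,\mathrm d\tilde W$, since by the first step $\tilde W$ is an $\tilde{\mathfrak F}_t$-cylindrical Wiener process and $\mathbb G(\tilde U)$ is adapted, so the same Riemann sums converge in $\tilde{\mathbb P}$-probability to the true It\^o integral and limits in probability are unique.

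The remaining terms $\psi(0)\langle D(U_0),\psi\rangle+\int_0^T[\partial_t\psi\,\langle D(U),\psi\rangle+\psi\,\langle F(U),\nabla\psi\rangle]\,\mathrm dt$ are $\omega$-wise continuous functionals of $U$ alone on the path space $L^1_{loc}(\mathbb R;L^1(\mathbb T^d))$, so their law transfers directly from $\mathcal L[U]=\mathcal L[\tilde U]$; adding the stochastic term and taking $\psi$ arbitrary in $C^\infty_c([0,T)\times\mathbb T^d)$ yields the asserted equality of laws, and since the left-hand bracket vanishes $\mathbb P$-a.s.\ because $U$ solves the equation, the right-hand bracket vanishes $\tilde{\mathbb P}$-a.s., i.e.\ $\tilde U$ solves the same equation driven by $\tilde W$.

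I expect the only genuine obstacle to be the core step: the It\^o integral is defined only as an $L^2$-limit and not as a pointwise function of $(\mathbb G(U),W)$, so one must exhibit explicit path-space approximations whose laws are transferable \emph{and} whose convergence is controlled by estimates depending on the law alone, and then re-identify the transported limit on the new space as a bona fide It\^o integral — which is precisely why the non-anticipativity and adaptedness bookkeeping of the first step cannot be skipped. The truncation and time-approximation needed to remove the boundedness and continuity assumptions on $\mathbb G(U)$ are routine given \eqref{G1}--\eqref{G2} and \eqref{reg int stoch mart}.
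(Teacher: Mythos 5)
The paper does not prove this statement at all: Theorem \ref{eq in law equation} is quoted verbatim as a recalled tool from the monograph \cite{Feir} (where it appears as Theorem 2.9.1), so there is no in-paper proof to compare against. Your sketch is essentially the standard argument given there — transfer of the Wiener property and of non-anticipativity through the joint law, followed by realising the stochastic integral as a limit in probability of law-transferable discrete functionals and re-identifying the limit as a bona fide It\^o integral on the new space — and it is correct in outline. Two points deserve the care you only gesture at: first, $U$ is a random distribution, not a process, so ``time-samples'' of $\mathbb G(U)$ are not defined; one must first replace $\mathbb G(U)$ by a time-mollification (itself a continuous, hence law-transferable, functional of the path), which simultaneously supplies the continuity needed for the left-endpoint Riemann sums to converge in probability. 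Second, the adaptedness and non-anticipativity statements must be phrased through the histories $\sigma_t[U]$, i.e.\ through pairings of $U$ against test functions supported in $(-\infty,t]$, rather than through point evaluations. With those repairs your argument is the proof in \cite{Feir}.
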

\begin{lemma}\label{Stoch int conv Lemma}
Let $(\Omega, \mathfrak{F}, \mathbb{P})$ be a complete probability space.  For $n \in \mathbb{N},$ let $W_n$ be an $(\mathfrak{F}^n_t)-$cylindrical Wiener process and let $G_n$ be an $(\mathfrak{F}^n_t)-$ progressively measurable stochastic process such that $G_n \in L^2(0,T; L_2(\mathfrak{U}; W^{l,2}(\mathbb{T}^d))) \; a.s.$ Suppose that $$ W_n \rightarrow W \quad \text{in} \; C([0,T]; \mathfrak{U}_0) \; \text{in probability},$$ $$ G_n \rightarrow G \quad \text{in} \; L^2(0,T; L_2(\mathfrak{U}; W^{l,2}(\mathbb{T}^d))) \; \text{in probability},$$
where $W= \sum_{k=1}^{\infty} e_k W_k.$ Let $(\mathfrak{F}_t)_{t \ge 0}$ be the filtration given by $$\mathfrak{F}_t= \sigma \bigg( \bigcup_{k=1}^{\infty} \sigma_t [Ge_k] \cup \sigma_t [W_k] \bigg).$$
Then after a possible change on a set of zero measure in $\Omega \times (0,T),$ $G$ is $(\mathfrak{F}_t)-$ progressively measurable and $$ \int_{0}^{\cdot} G_n d W_n \rightarrow \int_{0}^{\cdot} G dW \quad \text{in} \; L^2(0,T; L_2(\mathfrak{U}; W^{l,2}(\mathbb{T}^d))) \; \text{in probability}.$$
\end{lemma}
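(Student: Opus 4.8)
The plan is to adapt the classical stability theorem for It\^o stochastic integrals under simultaneous convergence of the integrands and of the driving cylindrical Wiener processes (see \cite{Da Prato} and the presentation in \cite{Feir}) to the present $W^{l,2}(\mathbb{T}^d)$-valued framework. By the subsequence criterion for convergence in probability it suffices to argue along a subsequence (not relabelled) for which $W_n\to W$ in $C([0,T];\mathfrak{U}_0)$ and $G_n\to G$ in $L^2(0,T;L_2(\mathfrak{U};W^{l,2}(\mathbb{T}^d)))$ hold $\mathbb{P}$-a.s. First I would dispose of the identification and measurability issues. Since each $W_n=\sum_k e_kW^n_k$ is an $(\mathfrak{F}^n_t)$-cylindrical Wiener process, the real processes $W^n_k$ and $W^n_kW^n_j-\delta_{kj}t$ are $(\mathfrak{F}^n_t)$-martingales; testing these martingale identities against arbitrary bounded continuous functionals of $(W_n,G_n)$ restricted to $[0,s]$ and letting $n\to\infty$ shows that $W_k$ and $W_kW_j-\delta_{kj}t$ are $(\mathfrak{F}_t)$-martingales, so that $W$ is an $(\mathfrak{F}_t)$-cylindrical Wiener process by L\'evy's characterization. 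As $G$ is jointly measurable in $(\omega,t)$ and $(\mathfrak{F}_t)$-adapted by the very definition of this filtration, it admits an $(\mathfrak{F}_t)$-progressively measurable modification; redefining $G$ on a $(\mathbb{P}\otimes\mathrm{d}t)$-null set affects neither the hypotheses nor $\int_0^\cdot G\,\mathrm{d}W$, which is then a well-defined $W^{l,2}(\mathbb{T}^d)$-valued continuous local martingale.

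The second step is a reduction to integrands bounded in $L^2(0,T;L_2(\mathfrak{U};W^{l,2}))$ uniformly in $n$. For $K>0$ set $\tau^K_n=\inf\{t\in[0,T]:\int_0^t\|G_n(s)\|^2_{L_2}\,\mathrm{d}s\ge K\}$, with $\inf\emptyset=T$, and put $G^K_n:=\mathbf{1}_{[0,\tau^K_n]}G_n$, so that $\|G^K_n\|^2_{L^2(0,T;L_2)}\le K$ surely while $\int_0^\cdot G^K_n\,\mathrm{d}W_n=\int_0^{\cdot\wedge\tau^K_n}G_n\,\mathrm{d}W_n$ agrees with $\int_0^\cdot G_n\,\mathrm{d}W_n$ on $\{\int_0^T\|G_n(s)\|^2_{L_2}\,\mathrm{d}s<K\}$. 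Since $\int_0^T\|G_n(s)\|^2_{L_2}\,\mathrm{d}s\to\int_0^T\|G(s)\|^2_{L_2}\,\mathrm{d}s$ in probability, the family $\{\int_0^T\|G_n\|^2_{L_2}\,\mathrm{d}s\}_n$ is tight in $[0,\infty)$, so $\sup_n\mathbb{P}\bigl(\int_0^\cdot G_n\,\mathrm{d}W_n\neq\int_0^\cdot G^K_n\,\mathrm{d}W_n\bigr)\to0$ as $K\to\infty$; applying the same truncation to $G$ gives $\int_0^\cdot G^K\,\mathrm{d}W\to\int_0^\cdot G\,\mathrm{d}W$ in probability as $K\to\infty$, and $G^K_n\to G^K$ in $L^2(0,T;L_2)$ in probability for every $K$ outside the countable set of atoms of $\int_0^T\|G\|^2_{L_2}\,\mathrm{d}s$. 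A $3\varepsilon$-argument in $K$ thus reduces the lemma to showing, for each such fixed $K$, that $\int_0^\cdot G^K_n\,\mathrm{d}W_n\to\int_0^\cdot G^K\,\mathrm{d}W$ in probability.

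For this bounded case, fix $K$, abbreviate $H_n:=G^K_n$, $H:=G^K$, and for a uniform partition $P=\{0=t_0<\dots<t_N=T\}$ introduce the delayed step-average $[V]_P(t)=\sum_{i\ge1}\bigl(\tfrac{1}{t_i-t_{i-1}}\int_{t_{i-1}}^{t_i}V(s)\,\mathrm{d}s\bigr)\mathbf{1}_{(t_i,t_{i+1}]}(t)$, which is adapted, linear in $V$, non-expansive on $L^2(0,T;L_2)$, and converges to $V$ there a.s.\ as $|P|\to0$. I would then split
\begin{equation*}
\begin{split}
\int_0^\cdot H_n\,\mathrm{d}W_n-\int_0^\cdot H\,\mathrm{d}W
&=\int_0^\cdot\bigl(H_n-[H_n]_P\bigr)\,\mathrm{d}W_n+\int_0^\cdot\bigl([H]_P-H\bigr)\,\mathrm{d}W\\
&\quad+\Bigl(\int_0^\cdot[H_n]_P\,\mathrm{d}W_n-\int_0^\cdot[H]_P\,\mathrm{d}W\Bigr).
\end{split}
\end{equation*}
For fixed $P$ the last term is, mode by mode, a finite linear combination of increments of $W^n_k$ with coefficients the block averages of $H_ne_k$; truncating the Hilbert--Schmidt sum over modes, whose tail is bounded in $L^2(\Omega)$ uniformly in $n$ by the It\^o isometry, one gets convergence to $0$ in probability as $n\to\infty$ from $W_n\to W$ in $C([0,T];\mathfrak{U}_0)$ and $H_n\to H$ in probability. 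For the first term, the It\^o isometry together with the identity $H_n-[H_n]_P=(H_n-H)-[H_n-H]_P-([H]_P-H)$ and the non-expansiveness of $[\cdot]_P$ gives $\mathbb{E}\|\int_0^\cdot(H_n-[H_n]_P)\,\mathrm{d}W_n\|^2_{L^2(0,T;W^{l,2})}\lesssim\mathbb{E}\|H_n-H\|^2_{L^2(0,T;L_2)}+\mathbb{E}\|[H]_P-H\|^2_{L^2(0,T;L_2)}$, whose first summand vanishes as $n\to\infty$ by bounded convergence (both integrands are dominated by $\sqrt K$) and whose second vanishes as $|P|\to0$; the middle term of the splitting satisfies the same bound and is $n$-independent. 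Taking $\limsup_{n\to\infty}$ at fixed $P$ and then letting $|P|\to0$, Chebyshev's inequality yields the convergence in probability, and undoing the truncation completes the proof.

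The hard part will be the uniformity in $n$: any direct appeal to the It\^o isometry or to the Burkholder--Davis--Gundy inequality (Proposition \ref{BDG}) produces bounds featuring $\mathbb{E}\|G_n\|^2_{L^2(0,T;L_2)}$, which need not be bounded when $G_n\to G$ only in probability. The stopping-time truncation of the second step is what forces a deterministic $L^2$-bound on the integrands, while the two properties of the delayed step operator used in the third step --- linearity and non-expansiveness on $L^2(0,T;L_2)$ --- are exactly what let the problematic quantity $\mathbb{E}\|H_n-[H_n]_P\|^2$ be traded for $\mathbb{E}\|H_n-H\|^2+\mathbb{E}\|[H]_P-H\|^2$, in which the limits $n\to\infty$ and $|P|\to0$ decouple. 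Securing this decoupling, rather than the routine passage to the limit for fixed data, is where the actual work lies; the measurability and Wiener-process claims are comparatively soft and follow from the construction of $(\mathfrak{F}_t)$.
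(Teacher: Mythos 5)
The paper does not actually prove this lemma: it is recalled in the preliminaries from \cite{Feir} (where it appears as a convergence result for stochastic integrals going back to Debussche, Glatt-Holtz and Temam), so there is no in-paper argument to compare against. Your proof reproduces the standard strategy of that reference --- pass to an a.s.\ convergent subsequence, identify $W$ as an $(\mathfrak{F}_t)$-cylindrical Wiener process via the martingale characterization, localize with the stopping times $\tau_n^K$ to force deterministic $L^2$ bounds on the integrands, and approximate by the adapted delayed step averages $[\,\cdot\,]_P$ so that the fixed-$P$ term becomes an explicit finite combination of Wiener increments --- and your decomposition, together with the decoupling of the limits $n\to\infty$ and $|P|\to 0$ via linearity and non-expansiveness of $[\,\cdot\,]_P$, is sound. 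The one step you state too loosely is the control of the Hilbert--Schmidt tail over modes $k>M$ in the fixed-$P$ term: uniform boundedness of the tail in $L^2(\Omega)$ does not by itself give smallness, and what you actually need is that $\limsup_{n}\mathbb{E}\int_0^T\sum_{k>M}\|[H_n]_P e_k\|^2_{W^{l,2}}\,\mathrm{d}t \to 0$ as $M\to\infty$; this does hold, because under the $K$-truncation both $\|H_n\|^2_{L^2(0,T;L_2)}$ and each finite partial sum $\sum_{k\le M}$ converge in $L^1(\Omega)$ to the corresponding quantities for $H$, so the $\limsup$ of the tail equals the tail of the limit, which vanishes by dominated convergence. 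With that one-line repair the argument is complete and matches the proof in the cited source.
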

\section{Main results}
Our analysis is focused on the existence of dissipative martingale solutions i.e. weak solutions in both PDEs and probability sense.  This means that the probability space and the driving Wiener process are not specified in advance and the stochastic elements are part of the solution itself.  Moreover, an additional difficulty, when dealing with weak solutions in the context of stochastic process, relies on the fact that the a priori estimates usually hold for a.e. $t \in (0,T)$ hence the unknowns cannot be interpreted as standard stochastic processes and therefore it is more convenient to consider these fields as random distributions.  This motivates the following definition:
\begin{definition}(Weak dissipative martingale solution) \label{def weak dissip sol} 
\\
Let $\nu=\nu(\rho,\rho u)$ be a Borel probability measure on $L^1(\mathbb{T}) \times L^1(\mathbb{T})$ s.t.  
\begin{equation}
\nu \{ \rho \ge0\}=1, \quad  \int_{L^1_x \times L^1_x} \bigg| \int_{\mathbb{T}} \bigg[\dfrac{1}{2}\rho |u|^2+ \dfrac{\rho^\gamma}{\gamma-1}+ | \partial_x \sqrt{\rho}|^2 dx \bigg|^r d\nu(\rho,\rho u ) < \infty, \quad r\ge 1.
\end{equation}
The quantity $((\Omega, \mathfrak{F}, (\mathfrak{F}_t)_{t \ge 0}, \mathbb{P}), \rho, \Lambda, \zeta)$ is called a weak dissipative martingale solution to system \eqref{stoc quantum}-\eqref{C.I} with initial law $\nu$ if 
\\
\begin{itemize}
\item[(1)]
$(\Omega, \mathfrak{F}, (\mathfrak{F}_t)_{t \ge 0}, \mathbb{P})$ is a stochastic basis with right continuous filtration;
\\
\item[(2)] 
$W$ is a cylindrical $(\mathfrak{F}_t)$-Wiener process;
\\
\item[(3)] 
$\rho, \Lambda, \zeta$ are random distributions adapted to $(\mathfrak{F}_t)_{t \ge 0},\; \rho \ge 0 \quad \mathbb{P}$-a.s.;
\\
\item[(4)]
There exists an $\mathfrak{F}_0$-measurable random variable $[\rho_0,u_0]$ s.t. $\nu= \mathcal{L}[\rho_0,u_0];$ 
\\
\item[(5)] 
Integrability conditions:
\begin{equation}
\begin{split}
& \rho \in L^\infty(0,T;H^1(\mathbb{T}) \cap L^2(0,T;H^2(\mathbb{T})),  \quad \sqrt{\rho}u \in L^\infty(0,T;L^2(\mathbb{T})),  \\ & \rho^{\frac{\gamma}{2}} \in L^\infty(0,T;L^2(\mathbb{T})) \cap L^2(0,T;H^1(\mathbb{T})),  \quad \partial_x \sqrt{\rho} \in L^\infty(0,T;L^2(\mathbb{T})), \\ &
\zeta \in L^2(0,T;L^2(\mathbb{T})),  \quad \partial_{xx} \rho^{\frac{\alpha}{2}} \in L^2(0,T;L^2(\mathbb{T})), \quad \partial_x \rho^{\frac{\alpha}{4}} \in L^4(0,T;L^4(\mathbb{T})), \quad  \mathbb{P}-a.s.
\end{split}
\end{equation}
\\
\item[(6)] The continuity equation
\begin{equation}
\int_{\mathbb{T}} \rho_0(x)\psi(0,x)dx+\int_{\mathbb{T}}\int_{0}^{T} \rho \partial_t \psi dxdt +\int_{\mathbb{T}}\int_{0}^{T} m \partial_x \psi dxdt =0,  
\end{equation}
$\text{holds for all} \; \psi \in C^\infty_c ( (0,T) \times \mathbb{T};\mathbb{R}),\quad \mathbb{P}-a.s.$
\\
\item[(7)] The momentum equation
\begin{equation}
\begin{split}
& \int_{\mathbb{T}} \rho_0 u_0 \psi(0,x)dx+ \int_{0}^{T} \int_{\mathbb{T}} \rho u \partial_t \psi dxdt+ \int_{0}^{T} \int_{\mathbb{T}} \rho u^2 \partial_x \psi dxdt+ \int_{0}^{T} \int_{\mathbb{T}} \rho^{\gamma} \partial_x \psi dxdt \\ &  -\int_{0}^{T} \int_{\mathbb{T}} \rho^{\frac{\alpha}{2}} \zeta \partial_x \psi dxdt- \dfrac{1}{2} \int_{0}^{T} \int_{\mathbb{T}}  \rho \partial_{xxx} \psi -2 \int_{0}^{T} \int_{\mathbb{T}}| \partial_x \sqrt{\rho} |^2
 \psi dxdt + \int_{0}^{T} \mathbb{G}(\rho, \rho u )\psi \text{d}W =0,  
\end{split}
\end{equation}
$\text{holds for all} \; \psi \in C^\infty_c ( (0,T) \times \mathbb{T};\mathbb{R}),\quad \mathbb{P}-a.s.$
\\
\item[(8)]The energy dissipation
\begin{equation}\label{energy dissipation equality}
\int_{0}^{T} \int_{\mathbb{T}} \rho^{\frac{\alpha}{2}} \zeta \psi dxdt= - \int_{0}^{T} \int_{\mathbb{T}} \rho^{\alpha-\frac{1}{2}} \Lambda \partial_x \psi dxdt -2\int_{0}^{T} \int_{\mathbb{T}} \rho^{\frac{\alpha-1}{2}} \Lambda \partial_x \rho^{\frac{\alpha}{2}} \psi dxdt, 
\end{equation}
$\text{holds for all} \; \psi \in C^\infty_c ( (0,T) \times \mathbb{T};\mathbb{R}),\quad \mathbb{P}-a.s.$
\\
\item[(9)] 
The energy inequality
\begin{equation}\label{energy weak def}
\begin{split}
&  -\int_{0}^{T} \partial_t \phi  \int_{\mathbb{T}} \bigg[\dfrac{1}{2}\rho |u|^2+ \dfrac{\rho^\gamma}{\gamma-1}+ | \partial_x \sqrt{\rho}|^2 \bigg] dxdt +\int_{0}^{T} \phi \int_{\mathbb{T}} (\mu(\rho) | \partial_x u_{\epsilon} |^2 dxdt  \\ & \le \phi(0) \int_{\mathbb{T}} \bigg[\dfrac{1}{2}\rho_0 |u_0|^2+ \dfrac{\rho_0^\gamma}{\gamma-1}+ | \partial_x \sqrt{\rho_0}|^2 \bigg] dx+\dfrac{1}{2} \int_{0}^{T} \phi \int_{\mathbb{T}} \sum_{k=1}^{\infty} \rho |F_{k}|^2 dxdt \\ & + \sum_{k=1}^{\infty} \int_{0}^{T} \phi \int_{\mathbb{T}} \rho F_{k} u dxdW_k , 
\end{split}
\end{equation}
holds for all $ \phi \in C_c^\infty ([0,T)),  \; \phi \ge 0, \; \mathbb{P}-a.s.$
\\
\item[(10)] 
The entropy inequality
\begin{equation}\label{entropy weak def}
\begin{split}
&  -\int_{0}^{T} \partial_t \phi  \int_{\mathbb{T}} \bigg[\dfrac{1}{2}\rho |V|^2+ \dfrac{\rho^\gamma}{\gamma-1}+ | \partial_x \sqrt{\rho}|^2 \bigg] dxdt + \dfrac{4 \gamma}{(\gamma+\alpha -1)^2} \int_{0}^{T} \phi \int_{\mathbb{T}} | \partial_x \rho^{\frac{\gamma+\alpha-1}{2} } |^2 dxdt \\ & + \dfrac{4}{\alpha^2} \int_{0}^{T} \phi \int_{\mathbb{T}} | \partial_{xx} 	\rho^{\frac{\alpha}{2}} |^2 dxdt +\dfrac{4(4-3 \alpha) }{3 \alpha^3}\int_{0}^{T} \phi \int_{\mathbb{T}} \rho^{-\alpha} | \partial_x \rho^\frac{\alpha}{2}|^4 dxdt  \\ & \\ & \le  \phi(0) \int_{\mathbb{T}} \bigg[\dfrac{1}{2}\rho_0 |V_0|^2+ \dfrac{\rho_0^\gamma}{\gamma-1}+ | \partial_x \sqrt{\rho_0}|^2 \bigg] dx+\dfrac{1}{2} \int_{0}^{T} \phi \int_{\mathbb{T}} \sum_{k=1}^{\infty} \rho |F_{k}|^2 dxdt \\ & + \sum_{k=1}^{\infty} \int_{0}^{T} \phi \int_{\mathbb{T}} \rho F_{k} V dxdW_k , 
\end{split}
\end{equation}
holds for all $ \phi \in C_c^\infty ([0,T)),  \; \phi \ge 0, \; \mathbb{P}-a.s.$
\end{itemize}
\end{definition}
\begin{remark}
Conditions \eqref{energy weak def} and \eqref{entropy weak def} are motivated from the fact that solutions of \eqref{stoc quantum}-\eqref{C.I} formally satisfies the following energy balance:
\begin{equation}\label{energy def strong}
\begin{split}
& \int_{\mathbb{T}} \dfrac{1}{2} \rho |u|^2+ \dfrac{\rho^\gamma}{\gamma-1}+ | \partial_x \sqrt{\rho}|^2 dx + \int_{0}^{t} \int_{\mathbb{T}} \mu(\rho)| \partial_x u |^2 dxds\\ & =\int_{\mathbb{T}} \dfrac{1}{2} \rho_0 |u_0|^2+ \dfrac{\rho_0^\gamma}{\gamma-1}+ | \partial_x \sqrt{\rho_0}|^2 dx +  \frac{1}{2} \sum_{k=1}^{\infty}\int_{0}^{t}\int_{\mathbb{T}} \rho |F_k|^2dxds + \int_{0}^{t} \int_{\mathbb{T}} \rho u \mathbb{F}(\rho,u) dxdW
\end{split}
\end{equation}
and entropy balance 
\begin{equation}\label{entropy def strong}
\begin{split}
& \int_{\mathbb{T}} \dfrac{1}{2}\rho |V|^2+ \dfrac{\rho^\gamma}{\gamma-1}+ | \partial_x \sqrt{\rho}|^2 dx + \dfrac{4 \gamma}{(\gamma+\alpha -1)^2} \int_{0}^{t}\int_{\mathbb{T}} | \partial_x \rho^{\frac{\gamma+\alpha-1}{2} } |^2 dxds \\ & + \dfrac{4}{\alpha^2} \int_{0}^{t} \int_{\mathbb{T}} | \partial_{xx} 	\rho^{\frac{\alpha}{2}} |^2 dxds +\dfrac{4(4-3 \alpha) }{3 \alpha^3}\int_{0}^{t} \int_{\mathbb{T}} \rho^{-\alpha} | \partial_x \rho^\frac{\alpha}{2}|^4 dxds \\ & = \int_{\mathbb{T}} \dfrac{1}{2}\rho_0 |V_0|^2+ \dfrac{\rho_0^\gamma}{\gamma-1}+ | \partial_x \sqrt{\rho_0}|^2 dx +\frac{1}{2} \sum_{k=1}^{\infty}\int_{0}^{t}\int_{\mathbb{T}} \rho |F_k|^2dxds+ \int_{0}^{t}\int_{\mathbb{T}} \rho V \mathbb{F}(\rho,u) dxdW,
\end{split}
\end{equation}
where $V$ is the effective velocity  $$V= u+ Q$$
and $$Q= \mu(\rho) \dfrac{\partial_x \rho}{\rho^2}$$
satisfies the following identity
$$ \partial_t Q+ u \partial_x Q= - \rho^{-1}\partial_x(\mu (\rho) \partial_{x} u ).$$
The balance laws \eqref{energy def strong}-\eqref{entropy def strong} are formally obtained by applying It$\hat{\text{o}}$ lemma, Theorem \ref{Ito Lemma} with $s=\rho, \; r=u, \;  Q(r)=\frac{1}{2}|u|^2$  to the functional 
\begin{equation}
F(\rho,u)(t) =\int_{\mathbb{T}} \frac{1}{2}\rho(t)|u(t)|^2dx
\end{equation}
and 
 $s=\rho, \; r=V, \;  Q(r)=\frac{1}{2}|V|^2$  to the functional 
\begin{equation}
F(\rho,u)(t) =\int_{\mathbb{T}} \frac{1}{2}\rho(t)|V(t)|^2dx,
\end{equation}
respectively.
\end{remark}
The main theorem of our paper is the following 
\begin{theorem}\label{main theorem weak}
Let $(\rho_0, u_0)$ be $\mathfrak{F}_0$-measurable random variables satisfying \eqref{C.I STRONG}.  Let the coefficients $G_k$ satisfy the conditions \eqref{G1}-\eqref{G2}. Then there exists a weak dissipative martingale solution $((\Omega, \mathfrak{F}, (\mathfrak{F}_t)_{t \ge 0}, \mathbb{P}), \rho, \Lambda, \zeta)$ to system \eqref{stoc quantum}-\eqref{C.I} in the sense of Definition \ref{def weak dissip sol}.
\end{theorem}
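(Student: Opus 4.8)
The plan is to follow the by-now standard multi-layer approximation scheme for stochastically forced compressible fluids, adapted to the presence of the Korteweg term and the degeneracy of the viscosity at vacuum; this is the content of Sections 4 and 5.

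\emph{Approximating system and strong pathwise solvability.} First I would introduce a regularized version of \eqref{stoc quantum} in which the density is forced to stay strictly positive and extra dissipation is built in --- concretely, adding to the momentum balance a higher-order artificial viscosity and, if needed, a cold-pressure or drag term --- so that the regularized equations possess solutions that are strong both in the PDE and in the probabilistic sense. Global well-posedness of this system would be obtained by a Faedo--Galerkin truncation in $x$, which turns the problem into a system of It\^o SDEs solvable by a Banach fixed point up to a stopping time; the uniform-in-Galerkin energy estimate, obtained by applying Theorem \ref{Ito Lemma} to $\int_{\mathbb{T}}\tfrac12\rho|u|^2\,\mathrm{d}x$ and estimating the stochastic term with Proposition \ref{BDG}, then rules out blow-up, allows passage to the Galerkin limit, and produces a global strong pathwise solution on the original stochastic basis with the regularity dictated by \eqref{C.I STRONG}.

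\emph{Uniform a priori bounds.} On this approximate level I would derive the two estimates that survive the vanishing of the regularization. The energy balance \eqref{energy def strong} comes from It\^o's formula applied to $F(\rho,u)=\int_{\mathbb{T}}\tfrac12\rho|u|^2\,\mathrm{d}x$; the BD-entropy balance \eqref{entropy def strong} comes from It\^o's formula applied to $F(\rho,u)=\int_{\mathbb{T}}\tfrac12\rho|V|^2\,\mathrm{d}x$ with $V=u+Q$, $Q=\mu(\rho)\partial_x\rho/\rho^2$, using the transport identity $\partial_tQ+u\partial_xQ=-\rho^{-1}\partial_x(\mu(\rho)\partial_xu)$. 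Taking $r$-th moments and invoking Proposition \ref{BDG} for the martingale parts would yield bounds, uniform in the approximation parameters, on $\sqrt\rho\,u$, $\rho^{\gamma/2}$, $\partial_x\sqrt\rho$ in $L^\infty_tL^2_x$ and on $\zeta$, $\partial_{xx}\rho^{\alpha/2}$, $\partial_x\rho^{\alpha/4}$ in the spaces of item~(5) of Definition \ref{def weak dissip sol}; here the Tame Capillarity range $\tfrac12<\alpha\le1$ is exactly what makes the BD-entropy dissipation coercive and the lower-order remainders absorbable.

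\emph{Truncation, tightness, Skorokhod, and passage to the limit.} Since $u$ need not be defined on $\{\rho=0\}$ in the limit, I would follow the truncation technique of \cite{Lacroix}: replace $u$ by a bounded nonlinear cutoff in the convective and forcing terms of the momentum equation, which stabilizes those nonlinearities under weak convergence while perturbing the equation only by quantities controlled by the a priori bounds. Combining the uniform estimates with fractional-in-time H\"older bounds for the momentum read off from the equation (the drift part in $W^{1,q}_t$-type spaces, the stochastic part via a Kolmogorov/Burkholder--Davis--Gundy argument) gives tightness of the laws of $(\rho,\rho u,\rho^{\alpha/2}\zeta,\dots,W)$ on a suitable sub-Polish space; Theorem \ref{Jak} then produces a new stochastic basis, a new cylindrical Wiener process $\tilde W$, and a.s.\ convergent copies of the unknowns. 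One then passes to the limit: the continuity equation converges directly; in the momentum equation the a.s.\ strong convergences identify the nonlinear products, $\Lambda$ and $\zeta$ are recovered via $\sqrt\rho\,\Lambda=m$ and $\rho^{\alpha/2}\zeta=\partial_x(\rho^{\alpha-1/2}\Lambda)-2\rho^{(\alpha-1)/2}\Lambda\,\partial_x\rho^{\alpha/2}$ (passing to the limit first in the regularized identity, then in the cutoff), the one-dimensional quantum term is rewritten distributionally as $\tfrac12\partial_{xxx}\rho-2\,\partial_x|\partial_x\sqrt\rho|^2$, and the It\^o integral is identified by Lemma \ref{Stoch int conv Lemma}; weak lower semicontinuity of the convex functionals yields the energy and entropy inequalities, Theorem \ref{eq in law equation} ensures the limit solves the system relative to $\tilde W$, and finally letting the cutoff parameter tend to zero recovers all of items~(1)--(10) of Definition \ref{def weak dissip sol}. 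The main obstacle is exactly this last passage near vacuum: simultaneously controlling the truncation, identifying $\Lambda$, $\zeta$ and the viscous flux $\mu(\rho)\partial_xu=\rho^{\alpha/2}(\rho^{\alpha/2}\partial_xu)$ through the surrogate variables, handling the nonlinear Korteweg contribution, and matching all of this with the convergence of the stochastic integral in a topology strong enough to pass to the limit.
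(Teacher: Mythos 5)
Your proposal follows essentially the same route as the paper: an approximating system with extra dissipation $\epsilon\partial_{xx}u_\epsilon$ admitting global strong pathwise solutions (the paper quotes this from \cite{D.P.S.} rather than redoing a Galerkin construction), uniform energy and BD-entropy bounds, the Lacroix-Violet--Vasseur truncation of the velocity, Jakubowski--Skorokhod compactness, identification of the limit via Theorem \ref{eq in law equation} and Lemma \ref{Stoch int conv Lemma}, and finally removal of the truncation. The only point where the paper is more specific than your sketch is the last step you flag as the main obstacle: the $\beta''_\delta$-terms are treated as bounded measures with total variation $O(\delta)$, the dissipation identity is localized away from vacuum by a density cutoff $\phi_m$, and the two parameters are coupled as $\delta=m^{-\theta}$ with $\theta\in(\tfrac14,\tfrac12)$ so that both error measures vanish simultaneously.
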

\begin{remark}
For $\alpha \in (\frac{1}{2}, 1]$ the regularity inferred by the a priori estimates for system \eqref{stoc quantum} is not enough to avoid the arising of the vacuum states of the density $\rho.$ Consequently the velocity $u$ may be not uniquely defined on the set $ \{ \rho=0 \}.$
\end{remark}
\begin{remark}\label{Remark 2}
The regularity of the initial conditions $(\rho_0, u_0)$ and of the stochastic forcing term $\mathbb{G}$ can be weakened by applying a suitable regularization argument. Indeed the minimal hypothesis for which Theorem \ref{main theorem weak} holds are the following:
\begin{equation}
 C^{-1} \le \rho_0 \le C,  \quad \rho_0 \in L^p(\Omega; H^{1}(\mathbb{T})),  \quad u_0 \in L^p(\Omega; L^2(\mathbb{T})),  \quad \text{for all} \; p \in [1,\infty).
\end{equation}
and $$ G_k: \mathbb{T} \times [0, \infty) \times \mathbb{R} \rightarrow \mathbb{R}, \quad G_k \in C^1(\mathbb{T} \times [0, \infty) \times \mathbb{R})$$ 
satisfying
\begin{equation}\label{hp 1 on G}
G_k(x,\rho, \rho u)| \le g_k( \rho + \rho |u|),
\end{equation}
\begin{equation}\label{hp 2 on G}
|\partial_{\rho, \rho u }G_k(x,\rho, \rho u)| \le g_k,
\end{equation}
\begin{equation}\label{hp 3 on G}
\sum_{k=1}^{\infty} g_k^2 < \infty.
\end{equation}
By defining $\rho_{0,\epsilon}= \rho_0 \ast \eta_\epsilon, \; u_{0,\epsilon}= u_0 \ast \eta_\epsilon$ and $\mathbb{F}_\epsilon= \mathbb{F} \ast \eta_\epsilon$ with $\eta_\epsilon$ standard mollifier,  we have that the approximating initial data and stochastic forcing term satisfy the hypothesis of Theorem \ref{global well posedness epsilon}.  In this case  the convergence result of the initial data must be included in the stochastic compactness argument through the tightness of the laws $\mathcal{L}[\rho_{0,\epsilon}], \; \mathcal{L}[u_{0,\epsilon}].$
\end{remark}
\begin{remark}
The result holds even in the deterministic case $\mathbb{G}=0$ and for an additive noise $\sigma(x)dW$ i.e.  when the noise does not depend on the solution. 
\end{remark}
\section{The approximating system}
In order to prove the existence of weak dissipative martingale solutions to system \eqref{stoc quantum} we construct an approximating system which enjoys extra dissipation properties and for which we prove the global well-posedness in the framework of strong solutions. 
In particular,  for $(x,t) \in \mathbb{T} \times (0,T),$ the approximating system is as follows
\begin{equation} \label{eps stoc quantum}
\begin{cases}
\text{d}\rho_{\epsilon}+\partial_x(\rho_{\epsilon} u_{\epsilon})\text{d}t=0,\\
\text{d}(\rho_{\epsilon} u_{\epsilon})+[ \partial_x(\rho_{\epsilon} u_{\epsilon}^2+\rho_{\epsilon}^\gamma)]\text{d}t=[\partial_x(\rho_{\epsilon}^\alpha\partial_x u_{\epsilon})+\epsilon \partial_{xx}u_{\epsilon} +  \rho_{\epsilon} \partial_x \bigg( \dfrac{\partial_{xx} \sqrt{\rho_{\epsilon}}}{\sqrt{\rho_{\epsilon}}}\bigg) ]\text{d}t +\mathbb{G}(\rho_{\epsilon},\rho_{\epsilon} u_{\epsilon})\text{d}W,
\end{cases}
\end{equation}
endowed with the following initial conditions:
\begin{equation} \label{epsilon C.I}
(\rho_{\epsilon}(x,t), u_{\epsilon}(x,t))_{|_{t=0}}=(\rho_0(x), u_0(x))
\end{equation} 
and periodic boundary conditions 
\begin{equation}\label{epsilon bc}
\rho_{\epsilon}(0,t)=\rho_{\epsilon}(1,t), \quad u_{\epsilon}(0,t)=u_{\epsilon}(1,t).
\end{equation}
Our goal is to perform the limit $\epsilon \rightarrow 0$ in order to prove the convergence of the strong solution $(\rho_\epsilon, u_\epsilon)$ of the approximating system \eqref{eps stoc quantum}-\eqref{epsilon C.I} to a weak dissipative martingale solution of system \eqref{stoc quantum}-\eqref{C.I}. For the system \eqref{eps stoc quantum}-\eqref{epsilon bc} it is possible to prove a well-posedness results.
We start by introducing the definition of strong pathwise solutions for system \eqref{eps stoc quantum}.
\begin{definition}(Local strong pathwise solution). \label{Def1}
Let $(\Omega, \mathfrak{F},(\mathfrak{F}_t)_{t \ge 0},\mathbb{P}), $ be a stochastic basis with a complete right-continuous filtration and let $W$ be an $(\mathfrak{F}_t)$-cylindrical Wiener process. Let $(\rho_0,u_0)$ be an $\mathfrak{F}_0-$measurable random variable with regularity 
\begin{equation} \label{C.I STRONG loc}
 C^{-1} \le \rho_0 \le C,  \quad \rho_0 \in H^{s+1}(\mathbb{T}),  \quad u_0 \in H^s(\mathbb{T}) \quad \mathbb{P}\text{-a.s.}
\end{equation}
for some $s > \frac{5}{2}.$
A triplet $(\rho_\epsilon,u_\epsilon,\tau)$ is called a local strong pathwise solution to system \eqref{eps stoc quantum}-\eqref{epsilon C.I}, provided
\begin{itemize}
\item[(1)]
 $\tau$ is a $\mathbb{P}-a.s.$ strictly positive $(\mathfrak{F}_t)$-stopping time;
 \item[(2)]
 the density $\rho_\epsilon$ is a $H^{s+1}(\mathbb{T})$-valued $(\mathfrak{F}_t)$-progressively measurable stochastic process such that 
 $$ \rho_\epsilon(\cdot \land \tau) >0, \quad \rho_\epsilon(\cdot \land \tau) \in C([0,T];H^{s+1}(\mathbb{T})) \quad \mathbb{P}-a.s.; $$
 \item[(3)]
 the velocity $u_\epsilon$ is a $H^s(\mathbb{T})$-valued $(\mathfrak{F}_t)$-progressively measurable stochastic process such that 
 $$ u_\epsilon(\cdot  \land \tau) >0, \quad u_\epsilon(\cdot \land \tau) \in C([0,T];H^s(\mathbb{T})) \quad \mathbb{P}-a.s.; $$
\item[(4)]
the continuity equation $$ \rho_\epsilon(t \land \tau)= \rho_0- \int_{0}^{t\land \tau} \partial_x(\rho_\epsilon u_\epsilon) ds $$
holds for all $t \in [0,T] \ \mathbb{P}-a,s,;$
\item[(5)] the momentum equation 
\begin{equation*}
\begin{split}
(\rho_\epsilon u_\epsilon)(t \land \tau)&= \rho_0 u_0 -\int_{0}^{t \land \tau} \partial_x(\rho_\epsilon u_\epsilon^2+p(\rho_\epsilon))ds +\int_{0}^{t \land \tau} \partial_x((\mu(\rho_\epsilon)+\epsilon) \partial_x u_\epsilon)ds \\ &+\int_{0}^{t \land \tau} \rho_\epsilon \partial_x \bigg( \dfrac{\partial_{xx}\sqrt{\rho_\epsilon}}{\sqrt{\rho_\epsilon}}\bigg)ds + \int_{0}^{t \land \tau} \mathbb{G}(\rho_\epsilon,\rho_\epsilon u_\epsilon)dW
\end{split}
\end{equation*}
holds for all $t \in [0,T] \ \mathbb{P}-a,s,;$
\end{itemize}
\end{definition}
\noindent
We underline that the presence of the third order derivative term in the momentum equation provides a different linear structure of the equation with respect to the compressible Navier-Stokes equations which determines a different regularity class for strong solutions $$(\rho_\epsilon,u_\epsilon) \in H^{s+1}(\mathbb{T}) \times H^s(\mathbb{T}).$$ For the proof of the local existence result of the system \eqref{eps stoc quantum}-\eqref{epsilon bc},  we refer to Theorem 2.3 in \cite{D.P.S.}.
\\
Now we introduce the definition of maximal and global strong pathwise solution
\begin{definition}(Maximal strong pathwise solution). \label{Def2}
Let $(\Omega, \mathfrak{F},(\mathfrak{F}_t)_{t \ge 0},\mathbb{P}), $ be a stochastic basis with a complete right-continuous filtration and let $W$ be an $(\mathfrak{F}_t)$-cylindrical Wiener process. Let $(\rho_0,u_0)$ be an $\mathfrak{F}_0-$measurable random variable in the space $H^{s+1}(\mathbb{T})\times H^s(\mathbb{T}).$
A quadruplet $(\rho_\epsilon,u_\epsilon,(\tau_R)_{R \in \mathbb{N}},\tau)$ is called a maximal strong pathwise solution to system \eqref{eps stoc quantum}-\eqref{epsilon C.I}, provided
\begin{itemize}
\item[(1)]
 $\tau$ is a $\mathbb{P}-a.s.$ strictly positive $(\mathfrak{F}_t)$-stopping time;
 \item[(2)] 
 $(\tau_R)_{R \in \mathbb{N}}$ is an increasing sequence of $(\mathfrak{F}_t)$-stopping times such that $\tau_R < \tau$ on the set $[\tau < \infty], \ \lim_{R \rightarrow \infty} \tau_R=\tau$ a.s. and 
\begin{equation}\label{limit cond}
\max \bigg\{ \sup_{t \in [0,\tau_R]} \| \log\rho_\epsilon (t) \|_{W_x^{2,\infty}},\;\sup_{t \in [0,\tau_R]}  \| u_\epsilon(t) \|_{W_x^{2,\infty}} \bigg\} \ge R \ \text{in} \ [\tau < \infty];
\end{equation}
\item[(3)]
each triplet $(\rho_\epsilon,u_\epsilon,\tau_R),  R\in \mathbb{N},$ is a local strong pathwise solution in the sense of Definition \ref{Def1}
\end{itemize}
If $(\rho_\epsilon,u_\epsilon,(\tau_R)_{R \in \mathbb{N}},\tau)$ is a maximal strong pathwise solution  and $\tau=\infty$ a.s.  then we say that the solution is global. 
\end{definition}
For system \eqref{eps stoc quantum} we infer the following well-posedness result which has been obtained in \cite{D.P.S.},  Theorem 2.4 for a similar viscosity profile.
\begin{theorem}\label{global well posedness epsilon}
Let $s \in \mathbb{N},  s > \frac{7}{2}, \; \gamma> 1.$ Let the coefficients $G_k$ satisfy \eqref{G1}-\eqref{G2}.  Let the initial datum $(\rho_0(x), u_0(x))$ be $\mathfrak{F}_0$-measurable random variables satisfying \eqref{C.I STRONG}. Then there exists a unique maximal global strong pathwise solution to system \eqref{eps stoc quantum}-\eqref{epsilon C.I} in the sense of Definition \ref{Def2}. 
In addition, the following regularity estimates hold:
\begin{itemize}
\item[(1)] Energy estimate:
\begin{equation}\label{epsilon energy visk}
\begin{split}
& \mathbb{E} \bigg| \sup_{t \in [0,T ]} \int_{\mathbb{T}} \bigg[\dfrac{1}{2}\rho_{\epsilon} |u_{\epsilon}|^2+ \dfrac{\rho_{\epsilon}^\gamma}{\gamma-1}+ | \partial_x \sqrt{\rho_{\epsilon}}|^2 \bigg] dx \bigg|^p +\mathbb{E} \bigg| \int_{0}^{T} \int_{\mathbb{T}} (\mu(\rho_{\epsilon}) +\epsilon)| \partial_x u_{\epsilon} |^2 dxdt \bigg|^p \\ & \lesssim 1+ \mathbb{E}\bigg|  \int_{\mathbb{T}} \bigg[\dfrac{1}{2}\rho_0|u_0|^2+ \dfrac{\rho_0^\gamma}{\gamma-1}+ | \partial_x \sqrt{\rho_0}|^2 \bigg] dx \bigg|^p,
\end{split}
\end{equation}
for any $p\ge 1.$
\\
\item[(2)] BD Entropy estimate: 
\begin{equation}\label{epsilon BD ENTROPY visk}
\begin{split}
& \mathbb{E} \bigg| \sup_{t \in [0,T]} \int_{\mathbb{T}} \bigg[\dfrac{1}{2}\rho_{\epsilon} |V_{\epsilon}|^2+ \dfrac{\rho_{\epsilon}^\gamma}{\gamma-1}+ | \partial_x \sqrt{\rho_{\epsilon}}|^2 \bigg] dx \bigg|^p \\ & +\mathbb{E} \bigg|  \dfrac{4 \gamma}{(\gamma+\alpha -1)^2} \int_{0}^{T}\int_{\mathbb{T}} | \partial_x \rho_{\epsilon}^{\frac{\gamma+\alpha-1}{2} } |^2 dxdt + \dfrac{4\epsilon \gamma}{(\gamma-1)^2} \int_{0}^{T}\int_{\mathbb{T}} | \partial_x \rho_{\epsilon}^{\frac{\gamma-1}{2} } |^2 dxdt \\ & + \dfrac{4}{\alpha^2} \int_{0}^{T} \int_{\mathbb{T}} | \partial_{xx} \rho_{\epsilon}^{\frac{\alpha}{2}} |^2 dxdt +\dfrac{4(4-3 \alpha) }{3 \alpha^3}\int_{0}^{T} \int_{\mathbb{T}} \rho_{\epsilon}^{-\alpha} | \partial_x \rho_{\epsilon}^\frac{\alpha}{2}|^4 dxdt \\ & +\dfrac{\epsilon}{2} \int_{0}^{T} \int_{\mathbb{T}} | \partial_{xx} \log \rho_{\epsilon} |^2 dxdt  \bigg|^p \\ & \lesssim 1+ \mathbb{E}\bigg|  \int_{\mathbb{T}} \bigg[\dfrac{1}{2}\rho_0 |V_0|^2+ \dfrac{\rho_0^\gamma}{\gamma-1}+ | \partial_x \sqrt{\rho_0}|^2 \bigg] dx \bigg|^p 
\end{split}
\end{equation}
for any $p\ge 1,$
where $V_{\epsilon}$ is the effective velocity  $$V_{\epsilon}= u_{\epsilon}+ Q_{\epsilon}^{\alpha}+\epsilon Q_{\epsilon}^0$$
and $$Q_{\epsilon}^{\alpha}= \mu(\rho_{\epsilon}) \dfrac{\partial_x \rho_{\epsilon}}{\rho_{\epsilon}^2},  \quad Q_{\epsilon}^0=  \dfrac{\partial_x \rho_{\epsilon}}{\rho_{\epsilon}^2}$$ verify the following transport equations
$$ \text{d} Q_{\epsilon}^{\alpha}+ u_{\epsilon} \partial_x Q_{\epsilon}^{\alpha}\text{d}t= - \rho^{-1}\partial_x(\mu (\rho_\epsilon) \partial_{x} u_\epsilon )\text{d}t$$
$$ \epsilon (\text{d} Q_{\epsilon}^0+ u_{\epsilon} \partial_x Q_{\epsilon}^0\text{d}t)= - \rho_{\epsilon}^{-1}(\epsilon \partial_{xx} u_{\epsilon} )\text{d}t.$$
\\
\item[(3)] No vacuum states:
\begin{equation}\label{upper rho}
\sqrt{\rho_{\epsilon}} \in L^p(\Omega;L^{\infty}(0,T;L^\infty(\mathbb{T}))),
\end{equation}
and
\begin{equation}\label{lower rho}
\dfrac{1}{\sqrt{\rho_{\epsilon}}} \in L^p(\Omega;L^{\infty}(0,T;L^\infty(\mathbb{T}))),
\end{equation}
for all $ p\in [1, \infty).$
\end{itemize}
\end{theorem}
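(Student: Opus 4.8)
\emph{Strategy.} The plan is the standard three-step scheme for quasilinear SPDEs: local existence and uniqueness of a strong pathwise solution, a continuation (blow-up) criterion producing a maximal solution, and uniform-in-$\epsilon$ a priori estimates that exclude finite-time blow-up, and so yield the global solution together with the bounds \eqref{epsilon energy visk}--\eqref{lower rho}. For the local part one rewrites $\mathrm{d}(\rho_\epsilon u_\epsilon)$ by means of the continuity equation, so that the momentum equation of \eqref{eps stoc quantum} becomes a second-order parabolic SPDE for $u_\epsilon$: the leading operator $\partial_x((\mu(\rho_\epsilon)+\epsilon)\partial_x u_\epsilon)$ is uniformly elliptic as long as $\rho_\epsilon$ is bounded away from zero, the third-order Bohm term is controlled at the $H^{s+1}$-level of $\rho_\epsilon$, and \eqref{G1}--\eqref{G2} make $\mathbb{G}(\rho_\epsilon,\rho_\epsilon u_\epsilon)$ bounded and locally Lipschitz between the relevant Sobolev spaces; a fixed-point argument on a short random interval then gives a unique local strong pathwise solution in $C([0,T];H^{s+1}(\mathbb{T}))\times C([0,T];H^{s}(\mathbb{T}))$ in the sense of Definition \ref{Def1}, which is Theorem 2.3 of \cite{D.P.S.}. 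Setting $\tau_R$ to be the first time $\max\{\|\log\rho_\epsilon\|_{W^{2,\infty}_x},\|u_\epsilon\|_{W^{2,\infty}_x}\}$ reaches $R$ and $\tau=\lim_{R\to\infty}\tau_R$ produces a maximal solution in the sense of Definition \ref{Def2}, and pathwise uniqueness (hence uniqueness of the maximal solution) follows from a Gronwall estimate for the difference of two solutions at the $H^{s+1}\times H^s$ level, as in \cite{D.P.S.}. All the a priori estimates below are first derived on $[0,\tau_R]$, where $\rho_\epsilon$ is smooth and bounded below so that the stochastic integrals are genuine martingales and the It\^o framework of Theorem \ref{Ito Lemma} applies, with constants independent of $R$; letting $R\to\infty$ by monotone convergence then forces $\tau=\infty$ a.s.

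\emph{Energy estimate.} I would apply It\^o's formula, Theorem \ref{Ito Lemma}, with $s=\rho_\epsilon$, $r=u_\epsilon$, $Q(r)=\tfrac12|u_\epsilon|^2$, to the functional $\int_{\mathbb{T}}\tfrac12\rho_\epsilon|u_\epsilon|^2\,\mathrm{d}x$, and combine it with the deterministic identities $\tfrac{\mathrm{d}}{\mathrm{d}t}\int_{\mathbb{T}}\tfrac{\rho_\epsilon^{\gamma}}{\gamma-1}\,\mathrm{d}x=-\int_{\mathbb{T}}\rho_\epsilon^{\gamma}\partial_x u_\epsilon\,\mathrm{d}x$ (a consequence of the continuity equation) and the one for $\tfrac{\mathrm{d}}{\mathrm{d}t}\int_{\mathbb{T}}|\partial_x\sqrt{\rho_\epsilon}|^2\,\mathrm{d}x$ obtained by pairing the Bohm term against $u_\epsilon$ and integrating by parts. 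The convective and pressure contributions then cancel, the viscous terms leave $-\int_{\mathbb{T}}(\mu(\rho_\epsilon)+\epsilon)|\partial_x u_\epsilon|^2\,\mathrm{d}x$, and the It\^o correction equals $\tfrac12\sum_k\int_{\mathbb{T}}|G_k(\rho_\epsilon,\rho_\epsilon u_\epsilon)|^2\rho_\epsilon^{-1}\,\mathrm{d}x\lesssim\int_{\mathbb{T}}(\rho_\epsilon+\rho_\epsilon|u_\epsilon|^2)\,\mathrm{d}x$ by \eqref{G1}--\eqref{G2}, which is controlled by the energy itself plus the conserved mass. Taking $\sup_{t\le\tau_R}$, raising to the power $p$, estimating the remaining martingale term by the Burkholder--Davis--Gundy inequality, Proposition \ref{BDG}, and applying Gronwall's lemma in expectation yields \eqref{epsilon energy visk}.

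\emph{BD entropy estimate (the core step).} I would introduce $Q^{\alpha}_\epsilon=\mu(\rho_\epsilon)\partial_x\rho_\epsilon/\rho_\epsilon^{2}$, $Q^{0}_\epsilon=\partial_x\rho_\epsilon/\rho_\epsilon^{2}$ and the effective velocity $V_\epsilon=u_\epsilon+Q^{\alpha}_\epsilon+\epsilon Q^{0}_\epsilon$; differentiating the continuity equation in $x$ shows that $Q^{\alpha}_\epsilon$ and $Q^{0}_\epsilon$ satisfy the transport equations stated in the theorem, which is exactly the device converting the viscous terms $\partial_x(\mu(\rho_\epsilon)\partial_x u_\epsilon)$ and $\epsilon\partial_{xx}u_\epsilon$ of the momentum equation into transport structure for $V_\epsilon$. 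Applying It\^o's formula with $s=\rho_\epsilon$, $r=V_\epsilon$, $Q(r)=\tfrac12|V_\epsilon|^2$ to $\int_{\mathbb{T}}\tfrac12\rho_\epsilon|V_\epsilon|^2\,\mathrm{d}x$ (legitimate since the strong solution keeps $\rho_\epsilon$ smooth and bounded below on $[0,\tau_R]$) and carrying out the numerous integrations by parts, the pressure term produces $\tfrac{4\gamma}{(\gamma+\alpha-1)^{2}}\int_{\mathbb{T}}|\partial_x\rho_\epsilon^{(\gamma+\alpha-1)/2}|^{2}\,\mathrm{d}x$ together with the $\epsilon$-analogue $\tfrac{4\epsilon\gamma}{(\gamma-1)^{2}}\int_{\mathbb{T}}|\partial_x\rho_\epsilon^{(\gamma-1)/2}|^{2}\,\mathrm{d}x$, the Bohm term paired with $Q^{\alpha}_\epsilon$ generates, after the Bresch--Desjardins cancellations, $\tfrac{4}{\alpha^{2}}\int_{\mathbb{T}}|\partial_{xx}\rho_\epsilon^{\alpha/2}|^{2}\,\mathrm{d}x+\tfrac{4(4-3\alpha)}{3\alpha^{3}}\int_{\mathbb{T}}\rho_\epsilon^{-\alpha}|\partial_x\rho_\epsilon^{\alpha/2}|^{4}\,\mathrm{d}x$ with \emph{positive} coefficients---which is precisely where the assumption $\tfrac12<\alpha\le1$ (so that $\alpha<\tfrac43$, the ``Tame Capillarity'' regime) enters---and the $\epsilon Q^{0}_\epsilon$ piece yields $\tfrac{\epsilon}{2}\int_{\mathbb{T}}|\partial_{xx}\log\rho_\epsilon|^{2}\,\mathrm{d}x$. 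Since $Q^{\alpha}_\epsilon$ and $Q^{0}_\epsilon$ depend on $\rho_\epsilon$ alone and $\rho_\epsilon$ carries no martingale part, the only stochastic contributions are the It\^o correction $\tfrac12\sum_k\int_{\mathbb{T}}|G_k(\rho_\epsilon,\rho_\epsilon u_\epsilon)|^2\rho_\epsilon^{-1}\,\mathrm{d}x$ (as in the energy estimate) and the martingale $\sum_k\int_{\mathbb{T}}\rho_\epsilon V_\epsilon F_k\,\mathrm{d}x\,\mathrm{d}W_k$, controlled via Proposition \ref{BDG} and Young's inequality using $\int_{\mathbb{T}}\rho_\epsilon|V_\epsilon|^2\,\mathrm{d}x$ and $\rho_\epsilon|Q^{\alpha}_\epsilon|^{2}\sim|\partial_x\rho_\epsilon^{\alpha-1/2}|^{2}$. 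Taking $\sup_{t\le\tau_R}$, $p$-th powers and Gronwall then gives \eqref{epsilon BD ENTROPY visk}. I expect this to be the main obstacle: one has to run the full It\^o expansion while simultaneously bookkeeping the delicate deterministic Bresch--Desjardins identities (many integrations by parts, the exact coefficients above) and the extra stochastic corrections, and check that everything closes in $p$-th moments uniformly in $\epsilon$.

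\emph{No-vacuum bounds and globality.} From \eqref{epsilon BD ENTROPY visk}, the $L^{\infty}_t$-bound (in all moments) on $\int_{\mathbb{T}}\rho_\epsilon|V_\epsilon|^{2}\,\mathrm{d}x$, together with $\rho_\epsilon|Q^{\alpha}_\epsilon|^{2}\sim|\partial_x\rho_\epsilon^{\alpha-1/2}|^{2}$, the energy control of $\int_{\mathbb{T}}\rho_\epsilon^{\gamma}\,\mathrm{d}x$ and the mass conservation $\int_{\mathbb{T}}\rho_\epsilon\,\mathrm{d}x=\int_{\mathbb{T}}\rho_0\,\mathrm{d}x$, gives $\rho_\epsilon^{\alpha-1/2}\in L^{\infty}_tH^{1}_x\hookrightarrow L^{\infty}_{t,x}$ by the one-dimensional Sobolev embedding, hence \eqref{upper rho}. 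For the lower bound I would use the transport identity $\tfrac{\mathrm{d}}{\mathrm{d}t}\log\rho_\epsilon=-\partial_x u_\epsilon$ along the characteristics of $u_\epsilon$, combined with the parabolic regularity of $u_\epsilon$ provided by the term $\epsilon\partial_{xx}u_\epsilon$, to bound $\log\rho_\epsilon$ from below, which gives \eqref{lower rho}, exactly as in Theorem 2.4 of \cite{D.P.S.}. The same ingredients, after bootstrapping, control $\|\log\rho_\epsilon\|_{W^{2,\infty}_x}$ and $\|u_\epsilon\|_{W^{2,\infty}_x}$ on $[0,\tau_R]$ uniformly in $R$, so the alternative \eqref{limit cond} cannot occur on a finite interval; letting $R\to\infty$ we conclude $\tau=\infty$ a.s., the solution is global, and the estimates \eqref{epsilon energy visk}--\eqref{lower rho} hold on $[0,T]$.
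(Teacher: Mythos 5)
Your overall architecture (local existence from \cite{D.P.S.}, energy estimate via It\^o's formula with $Q(r)=\tfrac12|u_\epsilon|^2$ plus Burkholder--Davis--Gundy and Gronwall, BD entropy via the effective velocity $V_\epsilon=u_\epsilon+Q^{\alpha}_\epsilon+\epsilon Q^{0}_\epsilon$, upper bound on $\rho_\epsilon$ from $\partial_x\sqrt{\rho_\epsilon}\in L^\infty_tL^2_x$ and the one-dimensional Sobolev embedding) coincides with the paper's sketch. The genuine gap is in your treatment of the lower bound \eqref{lower rho}. You propose to integrate $\tfrac{\mathrm{d}}{\mathrm{d}t}\log\rho_\epsilon=-\partial_xu_\epsilon$ along characteristics and invoke ``parabolic regularity from $\epsilon\partial_{xx}u_\epsilon$.'' This does not close: to bound $\log\rho_\epsilon$ from below along characteristics you need $\partial_xu_\epsilon\in L^1_tL^\infty_x$, whereas the $\epsilon$-regularization only yields $\sqrt{\epsilon}\,\partial_xu_\epsilon\in L^2_tL^2_x$ from the energy balance, and the $W^{2,\infty}$ control available on $[0,\tau_R]$ carries constants depending on $R$, which is exactly what you are not allowed to use when proving the continuation criterion. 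Moreover, for $\alpha\in(\tfrac12,1]$ the bound $\rho_\epsilon^{\alpha-1/2}\in L^\infty_tH^1_x$ gives an \emph{upper} bound on the density, not a lower one, so the mechanism of \cite{D.P.S.} for $\alpha\le\tfrac12$ does not transfer.

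The paper's actual argument for \eqref{lower rho} is different and is worth recording. One sets
\begin{equation*}
H_{\epsilon}=\frac{1}{\alpha-\tfrac12}\,\rho_{\epsilon}^{\alpha-\frac12}-2\epsilon\,\rho_{\epsilon}^{-\frac12},
\qquad\text{so that}\qquad
\partial_xH_{\epsilon}=\big(\rho_\epsilon^{\alpha}+\epsilon\big)\frac{\partial_x\rho_\epsilon}{\rho_\epsilon^{3/2}}
=\sqrt{\rho_\epsilon}\,\big(Q^{\alpha}_\epsilon+\epsilon Q^{0}_\epsilon\big),
\end{equation*}
which is controlled in $L^\infty_tL^1_x$ (in all moments) by $\|\sqrt{\rho_\epsilon}V_\epsilon\|_{L^2_x}+\|\sqrt{\rho_\epsilon}u_\epsilon\|_{L^2_x}$, i.e.\ by the BD entropy and the energy. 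By the mean value theorem and conservation of mass there is a point $\bar x$ with $C^{-1}\le\rho_\epsilon(\bar x,t)\le C$ deterministically, so the fundamental theorem of calculus gives $H_\epsilon\in L^p(\Omega;L^\infty_tL^\infty_x)$. Since the positive part $\frac{1}{\alpha-1/2}\rho_\epsilon^{\alpha-1/2}$ is already bounded by \eqref{upper rho}, the negative part $2\epsilon\rho_\epsilon^{-1/2}$ must be bounded as well, which yields $\|\epsilon\,\rho_\epsilon^{-1/2}\|_{L^p_\omega L^\infty_tL^\infty_x}\le C(\alpha)$; this is precisely the $\epsilon$-degenerate form of \eqref{lower rho} that the subsequent compactness argument relies on (see the uniform bound $\|\epsilon\,\rho_\epsilon^{-1/2}\|\le C$ collected in Section 5). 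Note also that this is where $\alpha>\tfrac12$ is used (through the constant $\tfrac{1}{\alpha-1/2}$), not only in the sign of $4-3\alpha$ as your write-up suggests. Finally, your closing claim that the energy and BD entropy ``control $\|\log\rho_\epsilon\|_{W^{2,\infty}_x}$ and $\|u_\epsilon\|_{W^{2,\infty}_x}$ uniformly'' overstates what these low-order estimates give; the higher-order bounds needed to defeat the blow-up alternative \eqref{limit cond} require a separate (and $\epsilon$-dependent) argument, for which the paper defers entirely to \cite{D.P.S.}.
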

\begin{remark}
The estimates \eqref{epsilon energy visk}-\eqref{upper rho} are independent on $\epsilon$ while the lower bound \eqref{lower rho} for the density degenerates as $\epsilon \rightarrow 0.$
\end{remark}
\begin{proof}
Here we present a sketch of the proof of Theorem \ref{global well posedness epsilon}.  We refer the reader to \cite{D.P.S.} for the detailed computations.  Precisely the existence of a unique local maximal strong patwhise solutions is obtained by improving the technique developed in \cite{Feir},  Chapter 5 for compressible fluids.  The extension argument relies on the derivation of the a priori estimates which allows to control the vacuum states of the density and high order derivative estimates. 
In order to prove \eqref{epsilon energy visk} we apply It$\hat{\text{o}}$ Lemma (Theorem \ref{Ito Lemma}) for $s=\rho_\epsilon, \; r=u_\epsilon, \;  Q(r)=\frac{1}{2}|u_\epsilon|^2$  to the functional 
\begin{equation}
F(\rho_\epsilon,u_\epsilon)(t) =\int_{\mathbb{T}} \frac{1}{2}\rho_\epsilon(t)|u_\epsilon(t)|^2dx,
\end{equation}
and after integrating by parts we deduce the following balance law
\begin{equation}
\begin{split}
& \int_{\mathbb{T}} \dfrac{1}{2} \rho_\epsilon |u_\epsilon|^2+ \dfrac{\rho_\epsilon^\gamma}{\gamma-1}+ | \partial_x \sqrt{\rho_\epsilon}|^2 dx + \int_{0}^{t} \int_{\mathbb{T}} (\mu(\rho_\epsilon)+\epsilon)| \partial_x u_\epsilon |^2 dxds\\ & =\int_{\mathbb{T}} \frac{1}{2} \rho_0 |u_0|^2 + \dfrac{\rho_0^\gamma}{\gamma-1}+ | \partial_x \sqrt{\rho_0}|^2 dx+\frac{1}{2} \sum_{k=1}^{\infty}\int_{0}^{t}\int_{\mathbb{T}} \rho_\epsilon |F_k|^2dxds+\int_{0}^{t} \int_{\mathbb{T}} \rho_\epsilon u_\epsilon \mathbb{F}(\rho_\epsilon,u_\epsilon) dxdW.
\end{split}
\end{equation}
Then by taking the $\sup$ in time,  the $p$-th power and applying the expectation we have
\begin{equation}\label{energy stoch}
\begin{split}
& \mathbb{E} \bigg| \sup_{t \in [0.T]} \int_{\mathbb{T}} \bigg[ \dfrac{1}{2} \rho_\epsilon |u_\epsilon|^2+ \dfrac{\rho_\epsilon^\gamma}{\gamma-1}+ | \partial_x \sqrt{\rho_\epsilon}|^2 \bigg] dx \bigg|^p + \mathbb{E} \bigg| \int_{0}^{T} \int_{\mathbb{T}} (\mu(\rho_\epsilon)+\epsilon)| \partial_x u_\epsilon |^2 dxdt \bigg|^p \\ &  \lesssim \mathbb{E} \bigg|  \int_{\mathbb{T}} \bigg[ \dfrac{1}{2} \rho_0 |u_0|^2+ \dfrac{\rho_0^\gamma}{\gamma-1}+ | \partial_x \sqrt{\rho_0}|^2 \bigg] dx \bigg|^p+ \mathbb{E} \bigg | \frac{1}{2} \sum_{k=1}^{\infty}\int_{0}^{T}\int_{\mathbb{T}} \rho_\epsilon |F_k|^2dxdt \bigg|^p \\ & +\mathbb{E} \bigg| \int_{0}^{T} \int_{\mathbb{T}} \rho_\epsilon u_\epsilon \mathbb{F}(\rho_\epsilon,u_\epsilon) dxdW \bigg|^p.
\end{split}
\end{equation}
The stochastic integral in the right hand side of \eqref{energy stoch} is estimated by means of the Burkholder-Davis-Gundy inequality,  Proposition \ref{BDG}, as follows 
\begin{equation}\label{stoch int enrgy estimate}
\begin{split}
& \mathbb{E} \bigg| \int_{0}^{t} \int_{} \rho_\epsilon u_\epsilon \mathbb{F}(\rho_\epsilon,u_\epsilon) dxdW \bigg|^p \le  \mathbb{E} \bigg[ \sup_{t \in [0,T]} \bigg|  \int_{0}^{t} \int_{} \rho_\epsilon u_\epsilon \mathbb{F}(\rho_\epsilon,u_\epsilon) dxdW \bigg|^p \bigg] \\ & \lesssim \mathbb{E} \bigg[ \int_{0}^{T} \sum_{k \in \mathbb{N}} \bigg| \int_{\mathbb{T}} \rho_\epsilon u_\epsilon F_k dx \bigg|^2 dt \bigg]^{\frac{p}{2}}\le \mathbb{E}\bigg[ \int_{0}^{T}  \bigg| \int_{\mathbb{T}} ( \rho_\epsilon+ \rho_\epsilon |u_\epsilon|^2) dx  \bigg|^2 dt \bigg]^{\frac{p}{2}},
\end{split}
\end{equation}
while the second contribution due to the stochastic forcing term in the right hand side of \eqref{energy stoch} is controlled by 
\begin{equation}
\begin{split}
\sum_{k \in \mathbb{N}} \int_{0}^{T} \int_{\mathbb{T}} \rho_\epsilon |F_k|^2dxdt \le \int_{0}^{T} \sum_{k \in \mathbb{N}} \alpha^2_k \int_{\mathbb{T}} ( \rho_\epsilon + \rho_\epsilon |u_\epsilon|^2) dxdt \lesssim \int_{0}^{T} \int_{\mathbb{T}} ( \rho_\epsilon+ \rho_\epsilon |u_\epsilon|^2) dxdt,
\end{split}
\end{equation}
Finally \eqref{epsilon energy visk} is obtained by applying Young inequality in \eqref{stoch int enrgy estimate} and using the Gronwall Lemma.  \\ The derivation of \eqref{epsilon BD ENTROPY visk} is obtained by using the same lines of argument of \eqref{epsilon energy visk} by replacing the velocity $u$ with the effective velocity $V_\epsilon=u_{\epsilon}+ Q_{\epsilon}^{\alpha}+\epsilon Q_{\epsilon}^0$ in the It$\hat{\text{o}}$ Lemma (Theorem \ref{Ito Lemma}). \\
\\
Equation \eqref{upper rho} follows by the energy estimate \eqref{epsilon energy visk} and Sobolev embedding.  
Concerning \eqref{lower rho} we observe that by mean value theorem,  for any $t \in (0,T)$ and $\omega \in \Omega,$ there exists a point $\bar x$ such that $$\rho_{\epsilon}(\bar x,t,  \omega)= \int_{\mathbb{T}} \rho_{\epsilon}(y,t,\omega) dy.$$
Therefore,  by the conservation of the mass and \eqref{C.I STRONG} there exists a deterministic constant $C>0 $ such that 
\begin{equation}\label{eq:a}
\frac{1}{C} \le \rho_{\epsilon}(\bar x,t,\omega) \le C, \quad \mathbb{P}\text{-a.s.}
\end{equation}
Now we define $$H_{\epsilon}=c(\alpha) \rho_{\epsilon}^{\alpha-\frac{1}{2}}+ \epsilon c(0) \rho_{\epsilon}^{-\frac{1}{2}},$$ where $c(\alpha)= \frac{1}{(\alpha-\frac{1}{2})},  \; c(0)=-2.$
By using the fundamental theorem of calculus, for any fixed $t \in (0,T), \; \omega \in \Omega$ and $x \in \mathbb{T},$ the following identity holds
\begin{equation}\label{FTC }
 H_{\epsilon}(x, t, \omega)-H_{\epsilon}(\bar{x},t, \omega)= \int_{\bar{x}}^{x} \partial_x H_{\epsilon}(y,t,\omega)dy.
\end{equation}
Hence we have 
\begin{equation}\label{FTC ineq}
|H_{\epsilon}(x, t, \omega)| \le |H_{\epsilon}(\bar{x},t, \omega)| +  \int_{\bar{x}}^{x} |\partial_x H_{\epsilon}(y,t,\omega)|dy,
\end{equation}
by taking the $\sup$ in both space and time and the $p-$th power in \eqref{FTC ineq} after applying expectation we get
\begin{equation}
H_{\epsilon} \in L^p(\Omega;L^{\infty}(0,T;L^\infty(\mathbb{T}))),
\; \text{for all} \; p\in [1, \infty).
\end{equation}
Moreover we observe that 
\begin{equation}
\begin{split}
| \epsilon c(0) \rho_{\epsilon}^{-\frac{1}{2}}| \le | c(\alpha) \rho_{\epsilon}^{\alpha-\frac{1}{2}}| +| c(\alpha) \rho_{\epsilon}^{\alpha-\frac{1}{2}}+ \epsilon c(0) \rho_{\epsilon}^{-\frac{1}{2}}|= | c(\alpha) \rho_{\epsilon}^{\alpha-\frac{1}{2}}| + | H_{\epsilon}|,
\end{split}
\end{equation}
hence by taking the sup in both space and time,  the p-th power and the expectation we get 
\begin{equation}
\begin{split}
\epsilon^p |c(0)|^p \mathbb{E} \bigg| \sup_{t,x} \rho_{\epsilon}^{-\frac{1}{2}} \bigg|^p\le c^p(\alpha)\mathbb{E} \bigg| \sup_{t,x} \rho_{\epsilon}^{\alpha-\frac{1}{2}} \bigg|^p+\mathbb{E} \bigg| \sup_{t,x} | H_{\epsilon}| \bigg|^p \le C(\alpha),
\end{split}
\end{equation}
from which we deduce 
\begin{equation}
\epsilon^p |c(0)|^p  \| \frac{1}{\sqrt{\rho_{\epsilon}}} \|_{L^p_\omega L^\infty_t L^\infty_x} \le C(\alpha).
\end{equation}
\end{proof}
\section{Proof of the main result}
This Section is devoted to the proof of the main result.  In particular, we adapt the strategy used in \cite{Lacroix} and \cite{Spirito} by introducing a suitable truncation of the velocity field $u$ in the momentum equation of \eqref{eps stoc quantum}. This method, together with a stochastic compactness argument,  allows us to prove the convergence of the strong pathwise solution $(\rho_\epsilon, u_\epsilon)$ determined in Section 4 to a limit process $(\rho, \Lambda, \zeta)$ which we identify as a weak dissipative martingale solutions of system \eqref{stoc quantum}-\eqref{C.I} in the sense of Definition \ref{def weak dissip sol}.
\subsection{The truncated formulation}
Let $\bar{\beta}: \mathbb{R} \rightarrow \mathbb{R}$ be an even,  positive,  smooth function with compact support such that \begin{equation*}
\bar{\beta}(z)=1, \quad z \in [-1,1],
\end{equation*}
\begin{equation*}
\text{supp} \bar{\beta} \subset (-2,2),
\end{equation*}
\begin{equation*}
0 \le \bar{\beta} \le 1.
\end{equation*}
Let also $\tilde{\beta}:\mathbb{R} \rightarrow \mathbb{R}$ be the antiderivative of $\bar{\beta}$ i.e. 
$$ \tilde{\beta}(z)= \int_{0}^{z} \bar{\beta}(s)ds$$
and $\bar{\beta}_\delta(z)=\bar{\beta}(\delta z), \;\tilde{\beta}_\delta(z)=\tilde{\beta}(\delta z).$ We define
$$\beta_\delta(z)= \dfrac{1}{\delta}\tilde{\beta}_\delta(z)$$
and we observe that $\beta_\delta(z)$ is an approximation of the identity.
The next lemma collects some elementary properties of the above truncation.  The proof can be performed by direct computations.
\begin{lemma}\label{lemma beta}
Let $\delta >0$ and $K= \| \bar{\beta} \|_{W^{2,\infty}}.$ Then for any $\delta >0$ there exists $C=C(K)$ such that the following hold
\begin{equation}\label{beta hp}
\beta_\delta(t) \underset{\delta \rightarrow 0}{ \longrightarrow t},  \quad \beta^{'}_\delta(t) \underset{\delta \rightarrow 0}{ \longrightarrow 1},  \quad| \beta^{'}_\delta(t) | \le 1, \quad | \beta^{''}_\delta (t) | \le C\delta.
\end{equation}
\end{lemma}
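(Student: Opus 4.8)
The plan is to verify the four assertions in \eqref{beta hp} by direct differentiation, using the chain rule together with the fundamental theorem of calculus, and then to invoke the two structural properties of $\bar{\beta}$: that $\bar{\beta}\equiv 1$ on $[-1,1]$ and that $0\le\bar{\beta}\le 1$ everywhere.

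First I would record the explicit expressions for $\beta_\delta$ and its first two derivatives. Since $\beta_\delta(z)=\frac{1}{\delta}\tilde{\beta}(\delta z)=\frac{1}{\delta}\int_0^{\delta z}\bar{\beta}(s)\,\mathrm{d}s$, differentiating once and using $\tilde{\beta}'=\bar{\beta}$ gives $\beta_\delta'(z)=\bar{\beta}(\delta z)$, and differentiating once more gives $\beta_\delta''(z)=\delta\,\bar{\beta}'(\delta z)$. From these identities the statements follow at once. For the two pointwise limits, fix $z\in\mathbb{R}$: as soon as $\delta|z|\le 1$, the interval with endpoints $0$ and $\delta z$ is contained in $[-1,1]$, so $\bar{\beta}\equiv 1$ there, whence $\beta_\delta(z)=\frac{1}{\delta}(\delta z)=z$ and $\beta_\delta'(z)=\bar{\beta}(\delta z)=1$; letting $\delta\to 0$ therefore yields $\beta_\delta(z)\to z$ and $\beta_\delta'(z)\to 1$ (in fact these are equalities for all sufficiently small $\delta$). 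For the uniform bounds, $0\le\bar{\beta}\le 1$ gives $|\beta_\delta'(z)|=|\bar{\beta}(\delta z)|\le 1$ for every $z$ and every $\delta>0$, while $|\beta_\delta''(z)|=\delta\,|\bar{\beta}'(\delta z)|\le\delta\,\|\bar{\beta}'\|_{L^\infty}\le K\delta$ because $\|\bar{\beta}'\|_{L^\infty}\le\|\bar{\beta}\|_{W^{2,\infty}}=K$. Hence the last inequality holds with $C=K$, a constant depending only on $K$ and independent of $\delta$.

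There is essentially no obstacle here, the lemma being a routine computation, as the statement itself notes. The only point I would be careful to state precisely is the distinction between the first two assertions, which are pointwise in $z$ (indeed exact equalities once $\delta\le 1/|z|$, hence uniform on compact sets but not globally), and the last two, which are bounds holding uniformly in $z\in\mathbb{R}$; it is this uniformity that is used when the truncation is inserted into the momentum equation and in the compactness argument of Section 5, so I would phrase the proof to make it explicit.
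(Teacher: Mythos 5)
Your computation is correct and is exactly the ``direct computation'' the paper alludes to without writing out: the identities $\beta_\delta'(z)=\bar{\beta}(\delta z)$ and $\beta_\delta''(z)=\delta\,\bar{\beta}'(\delta z)$ immediately yield all four assertions, with $C=K$. Your added remark distinguishing the pointwise (in fact eventually exact) limits from the uniform-in-$z$ bounds is accurate and consistent with how the lemma is used later in the paper.
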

\noindent
Now we infer that the truncated velocity $\beta(u_\epsilon)$ satisfies the following formulation of the momentum equation
\begin{proposition}\label{truncated form prop}
Let $(\rho_\epsilon, u_\epsilon)$ be a strong pathwise solution of system \eqref{eps stoc quantum}-\eqref{epsilon C.I},  let $\beta$ be the truncation defined in \eqref{beta hp},  then the following identity holds
\begin{equation}\label{truncated momentum}
\begin{split}
& \int_{\mathbb{T}}\rho_{\epsilon} \beta_\delta(u_{\epsilon})dx= \int_{\mathbb{T}}\rho_0 \beta_\delta(u_0)dx-\int_{0}^{t}\int_{\mathbb{T}}[2 \rho_\epsilon^{\frac{\gamma}{2}}\partial_x \rho_\epsilon^{\frac{\gamma}{2}} \beta^{'}_\delta(u_\epsilon)]dxds \\ & +\int_{0}^{t}\int_{\mathbb{T}} [N_\epsilon \beta^{'}_\delta (u_\epsilon)+\partial_x(\rho_{\epsilon}^{\frac{\alpha}{2}}M_\epsilon \beta^{'}_\delta (u_\epsilon))- \rho_\epsilon^{\frac{\alpha}{2}} M_\epsilon \beta^{''}_\delta (u_\epsilon) \partial_x u_\epsilon]dxds \\ & +\int_{0}^{t} \int_{\mathbb{T}} \sum_{k \in \mathbb{N}} \dfrac{1}{2} \rho_\epsilon \beta^{''}_\delta(u_\epsilon) |F_k|^2 dxds + \int_{0}^{t} \int_{\mathbb{T}} \sum_{k\in \mathbb{N}} \rho_\epsilon \beta^{'}_\delta (u_\epsilon) F_k dx\text{d}W,
\end{split}
\end{equation}
with $$M_\epsilon= \zeta_\epsilon + \rho_\epsilon^{\frac{1-\alpha}{2}}(\partial_{xx} \sqrt{\rho_\epsilon}-4| \partial_x \rho_\epsilon^{\frac{1}{4}}|^2), \quad N_\epsilon= \epsilon \partial_{xx}u_\epsilon.$$
\end{proposition}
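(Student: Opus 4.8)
The plan is to derive \eqref{truncated momentum} by applying the generalized Itô formula of Theorem \ref{Ito Lemma} to the functional $F(\rho_\epsilon,u_\epsilon)(t)=\int_{\mathbb{T}}\rho_\epsilon(t)\beta_\delta(u_\epsilon(t))\,dx$, i.e.\ with the choices $s=\rho_\epsilon$, $r=u_\epsilon$ and $Q(r)=\beta_\delta(r)$. By Theorem \ref{global well posedness epsilon} the pair $(\rho_\epsilon,u_\epsilon)$ is a strong pathwise solution with $\rho_\epsilon\in C([0,T];H^{s+1})$, $u_\epsilon\in C([0,T];H^s)$ for $s>7/2$ and with $\rho_\epsilon$ bounded away from zero $\mathbb{P}$-a.s., so all the smoothness and integrability hypotheses of Theorem \ref{Ito Lemma} are met; the bounds on $\beta_\delta,\beta_\delta',\beta_\delta''$ from Lemma \ref{lemma beta} guarantee the required control on $Q,Q',Q''$. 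First I would write out the drift and diffusion coefficients from \eqref{eps stoc quantum}: namely $[D\rho_\epsilon]=-\partial_x(\rho_\epsilon u_\epsilon)$, $[\mathbb{D}\rho_\epsilon]=0$ (the continuity equation has no noise), while for $r=u_\epsilon$ one divides the momentum equation by $\rho_\epsilon$, so that $[\mathbb{D}u_\epsilon](e_k)=\rho_\epsilon^{-1}G_k(\rho_\epsilon,\rho_\epsilon u_\epsilon)=F_k$ by the structural assumption $G_k=\rho F_k$, and $[Du_\epsilon]$ collects the convective, pressure, viscous, capillary and $\epsilon$-regularization terms divided by $\rho_\epsilon$, corrected by the Itô term coming from expanding $\mathrm{d}(\rho_\epsilon u_\epsilon)$ via the product rule.

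Next I would substitute these coefficients into the Itô formula of Theorem \ref{Ito Lemma}. The term $\int_{\mathbb{T}}\rho_0\beta_\delta(u_0)\,dx$ is the initial datum; the deterministic Itô correction $\tfrac12\sum_k\int\rho_\epsilon\beta_\delta''(u_\epsilon)|[\mathbb{D}u_\epsilon](e_k)|^2$ becomes exactly $\tfrac12\sum_k\int\rho_\epsilon\beta_\delta''(u_\epsilon)|F_k|^2$; the cross term $\sum_k\int[\mathbb{D}\rho_\epsilon](e_k)[\mathbb{D}u_\epsilon](e_k)$ vanishes since $[\mathbb{D}\rho_\epsilon]=0$; and the stochastic term reduces to $\sum_k\int\rho_\epsilon\beta_\delta'(u_\epsilon)F_k\,dx\,\mathrm{d}W_k$. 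It remains to treat the pieces $\int\beta_\delta(u_\epsilon)[D\rho_\epsilon]=-\int\beta_\delta(u_\epsilon)\partial_x(\rho_\epsilon u_\epsilon)$ and $\int\rho_\epsilon\beta_\delta'(u_\epsilon)[Du_\epsilon]$. Adding these two and integrating by parts, the genuinely first-order-in-$u_\epsilon$ parts (convective term) cancel as in the classical renormalization, while the pressure term $-\partial_x\rho_\epsilon^\gamma/\rho_\epsilon$ produces, after writing $\partial_x\rho_\epsilon^\gamma=2\rho_\epsilon^{\gamma/2}\partial_x\rho_\epsilon^{\gamma/2}$ and $\gamma$-bookkeeping, the term $-\int 2\rho_\epsilon^{\gamma/2}\partial_x\rho_\epsilon^{\gamma/2}\beta_\delta'(u_\epsilon)$.

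The heart of the computation is rewriting the viscous, $\epsilon$-regularizing and Korteweg contributions in terms of $\zeta_\epsilon$, $M_\epsilon$ and $N_\epsilon$. The $\epsilon$-term $\epsilon\partial_{xx}u_\epsilon$ contributes $\int N_\epsilon\beta_\delta'(u_\epsilon)$ directly. For the remaining part one uses the algebraic identity defining $\zeta$, namely $\rho^{\alpha/2}\zeta=\partial_x(\rho^{\alpha-1/2}\Lambda)-2\rho^{(\alpha-1)/2}\Lambda\,\partial_x\rho^{\alpha/2}$ with $\Lambda=\sqrt{\rho}u$, together with the Korteweg identity $\rho\partial_x\big(\partial_{xx}\sqrt\rho/\sqrt\rho\big)=\partial_x\big(\rho\partial_{xx}\log\rho\big)$-type manipulations (equivalently the Bohm-potential rewriting in terms of $\partial_{xx}\sqrt{\rho}$ and $|\partial_x\rho^{1/4}|^2$), so that $\partial_x(\mu(\rho_\epsilon)\partial_x u_\epsilon)+\rho_\epsilon\partial_x(\partial_{xx}\sqrt{\rho_\epsilon}/\sqrt{\rho_\epsilon})$ combines into $\partial_x(\rho_\epsilon^{\alpha/2}M_\epsilon)$ with $M_\epsilon=\zeta_\epsilon+\rho_\epsilon^{(1-\alpha)/2}(\partial_{xx}\sqrt{\rho_\epsilon}-4|\partial_x\rho_\epsilon^{1/4}|^2)$. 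Pairing $\rho_\epsilon\beta_\delta'(u_\epsilon)$ with $\rho_\epsilon^{-1}\partial_x(\rho_\epsilon^{\alpha/2}M_\epsilon)$ and integrating by parts yields $\int\partial_x(\rho_\epsilon^{\alpha/2}M_\epsilon\beta_\delta'(u_\epsilon))-\int\rho_\epsilon^{\alpha/2}M_\epsilon\beta_\delta''(u_\epsilon)\partial_x u_\epsilon$, which are exactly the two middle terms of \eqref{truncated momentum}. \textbf{The main obstacle} will be carrying out this last rewriting cleanly: one must verify that the capillary term really does fuse with the viscous term into the single flux $\rho_\epsilon^{\alpha/2}M_\epsilon$ — this is where the \emph{tame capillarity} range $\alpha\in(\tfrac12,1]$ enters — and one must keep careful track of which derivatives fall on $\beta_\delta'(u_\epsilon)$ versus on the density quantities, since the whole point of the truncation is that $\partial_x u_\epsilon$ appears only multiplied by the small factor $\beta_\delta''(u_\epsilon)=O(\delta)$. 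All the integrations by parts are justified because, for fixed $\epsilon$, the strong solution has enough Sobolev regularity and $\rho_\epsilon^{-1}\in L^\infty$ $\mathbb{P}$-a.s.
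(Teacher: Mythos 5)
Your proposal follows the paper's proof essentially verbatim: both apply the generalized It\^o formula of Theorem \ref{Ito Lemma} with $s=\rho_\epsilon$, $r=u_\epsilon$, $Q=\beta_\delta$, cancel the convective term $\rho_\epsilon\beta_\delta'(u_\epsilon)u_\epsilon\partial_x u_\epsilon=\rho_\epsilon u_\epsilon\partial_x\beta_\delta(u_\epsilon)$ against the continuity-equation contribution $-\int\beta_\delta(u_\epsilon)\partial_x(\rho_\epsilon u_\epsilon)\,dx$ after an integration by parts, and use the product rule $\beta_\delta'(u_\epsilon)\partial_x(\rho_\epsilon^{\alpha/2}M_\epsilon)=\partial_x(\rho_\epsilon^{\alpha/2}M_\epsilon\beta_\delta'(u_\epsilon))-\rho_\epsilon^{\alpha/2}M_\epsilon\beta_\delta''(u_\epsilon)\partial_x u_\epsilon$ to produce the two middle terms. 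One small correction: the fusion of the viscous and Bohm-potential terms into the single flux $\partial_x(\rho_\epsilon^{\alpha/2}M_\epsilon)$ is a purely algebraic identity (using $\rho\partial_x(\partial_{xx}\sqrt{\rho}/\sqrt{\rho})=\partial_x(\sqrt{\rho}\,\partial_{xx}\sqrt{\rho}-|\partial_x\sqrt{\rho}|^2)$ and $\zeta_\epsilon=\rho_\epsilon^{\alpha/2}\partial_x u_\epsilon$) valid for any $\alpha$, so the tame-capillarity restriction $\alpha\in(\tfrac{1}{2},1]$ plays no role at this step --- it is needed only for the $\epsilon$-uniform bounds used later in the compactness argument.
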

\begin{proof}
We apply It$\text{\'{o}}$ lemma,  Theorem \ref{Ito Lemma} for $s= \rho_\epsilon, \; r=u_\epsilon, \; Q(r)=\beta_\delta(u_\epsilon)$ and we deduce 
\begin{equation}\label{ito balance}
\begin{split}
& \int_{\mathbb{T}}\rho_{\epsilon} \beta_\delta(u_{\epsilon})dx= \int_{\mathbb{T}}\rho_0 \beta_\delta(u_0)dx-\int_{0}^{t}\int_{\mathbb{T}}[\rho_\epsilon \beta^{'}_\delta(u_\epsilon) u_\epsilon \partial_x u_{\epsilon}-2 \rho_\epsilon^{\frac{\gamma}{2}}\partial_x \rho_\epsilon^{\frac{\gamma}{2}} \beta^{'}_\delta(u_\epsilon)]dxds \\ & +\int_{0}^{t}\int_{\mathbb{T}} [N_\epsilon \beta^{'}_\delta (u_\epsilon)+\beta^{'}_\delta(u_\epsilon) \partial_x(\rho_{\epsilon}^{\frac{\alpha}{2}}M_\epsilon)]dxds +\int_{0}^{t} \int_{\mathbb{T}} \sum_{k \in \mathbb{N}} \dfrac{1}{2} \rho_\epsilon \beta^{''}_\delta(u_\epsilon) |F_k|^2 dxds+ \\ & -\int_{0}^{t}\int_{\mathbb{T}} \beta_\delta(u_\epsilon) \partial_x (\rho_\epsilon u_\epsilon) dxds + \int_{0}^{t} \sum_{k\in \mathbb{N}} \rho_\epsilon \beta^{'}_\delta (u_\epsilon) F_k dx\text{d}W.
\end{split}
\end{equation}
Now we observe that since $\rho_\epsilon \beta^{'}_\delta (u_\epsilon) u_\epsilon \partial_x u_\epsilon = \rho_\epsilon u_\epsilon \partial_x \beta_\delta (u_\epsilon),$ 
then the second term in the right hand side of \eqref{ito balance} cancels with the last non noise term after integrating by parts.
Furthermore 
$\beta^{'}_\delta \partial_x(\rho_{\epsilon}^{\frac{\alpha}{2}}M_\epsilon)= \partial_x(\rho_{\epsilon}^{\frac{\alpha}{2}}M_\epsilon \beta^{'}_\delta (u_\epsilon))- \rho_\epsilon^{\frac{\alpha}{2}} M_\epsilon \beta^{''}_\delta (u_\epsilon) \partial_x u_\epsilon$
and hence we end up with \eqref{truncated momentum}.
\end{proof}
\subsection{Uniform bounds}
For the reader simplicity we collect the $\epsilon-$independent bounds from \eqref{epsilon energy visk}, \eqref{epsilon BD ENTROPY visk} and \eqref{upper rho}.  To be precise we have that there exists a deterministic constant $C>0,$ independent on $\epsilon$ such that the following regularity hold
\begin{equation}\label{eps indep reg}
\begin{split}
& \| \sqrt{\rho_\epsilon}u_\epsilon \|_{L^p_\omega L^\infty_t L^2_x}\le C,  \quad \| \rho_\epsilon\|_{L^p_\omega L^\infty_t L^\gamma_x} \le C, \quad  \| \partial_x \sqrt{\rho_\epsilon}\|_{L^p_\omega L^\infty_t L^2_x} \le C, \\ & \| \rho_\epsilon^{\frac{\alpha}{2}}\partial_x u_\epsilon \|_{L^p_\omega L^2_t L^2_x} \le C,  \quad  \| \partial_x (\rho_\epsilon^{\alpha-\frac{1}{2}} ) \|_{L^p_\omega L^{\infty}_t L^2_x} \le C, \quad \| \partial_x (\rho_\epsilon^{\frac{\gamma+\alpha-1}{2} }) \|_{ L^p_\omega L^{2}_t L^2_x} \le C,  \quad \\ & \| \partial_{xx} \rho^{\frac{\alpha}{2}} \|_{L^p_\omega L^{2}_t L^2_x}, \le C,   \quad  \| \partial_x \rho_\epsilon^\frac{\alpha}{4} \|_{L^p_\omega L^{4}_t L^4_x} \le C, \quad \| \sqrt{\rho_\epsilon} \|_{L^p_\omega L^\infty_t L^\infty_x} \le C,  
\end{split}
\end{equation}
for all $ p \in [1,\infty).$ Moreover from \eqref{eps indep reg} we deduce \begin{equation} \label{eps indep reg 2}
\begin{split}
\| \rho_\epsilon u_\epsilon \|_{L^p_\omega L^2_t L^2_x} \le C,  \quad \| \partial_x(\rho_\epsilon u_\epsilon) \|_{L^p_\omega L^2_t L^1_x} \le C,
\end{split}
\end{equation}
and by using the continuity equation we have
\begin{equation}
\| \partial_t \rho_\epsilon \| _{L^p_\omega L^2_t L^1_x} \le C.
\end{equation}
Finally, we observe that 
\begin{equation}
\begin{split}
& \| \sqrt{\epsilon} \partial_x u_\epsilon \|_{L^p_\omega L^2_t L^2_x} \le C, \quad  \| \sqrt{\epsilon} \partial_x (\rho^{\frac{\gamma-1}{2} }) \|_{ L^p_\omega L^{2}_t L^2_x} \le C, \\ & \| \sqrt{\epsilon} \partial_{xx} \log \rho_\epsilon \|_{ L^p_\omega L^{2}_t L^2_x} \le C, \quad \| \epsilon \frac{1}{\sqrt{\rho_{\epsilon}}} \|_{L^p_\omega L^\infty_t L^\infty_x} \le C.
\end{split}
\end{equation}
\subsection{Stochastic compactness}
Stochastic compactness relies on the tightness of the family of joint law of the process $(\rho_\epsilon,  u_\epsilon).$ In particular,  due to the regularity of strong solutions we make use of the Jakubowski-Skorokhod representation theorem, Theorem \ref{Jak} to prove (up to a subsequence) the convergence of the solution in a suitable path space $\mathcal{X}.$
\\
\\
Using the above uniform bounds \eqref{eps indep reg}-\eqref{eps indep reg 2} our goal is to perform the limit $\epsilon \rightarrow 0.$
Let $\nu$ be a probability measure on $L^1(\mathbb{T}) \times L^1(\mathbb{T})$
satisfying 
\begin{equation}
\nu \{ \rho \ge0\}=1, \quad  \int_{L^1_x \times L^1_x} \bigg| \int_{\mathbb{T}} \bigg[\dfrac{1}{2}\rho |u|^2+ \dfrac{\rho^\gamma}{\gamma-1}+ | \partial_x \sqrt{\rho}|^2 dx \bigg|^r d\nu(\rho,\rho u ) < \infty, \quad r\ge 1.
\end{equation}
and let $(\Omega, \mathfrak{F},(\mathfrak{F}_t)_{t \ge 0},\mathbb{P}), $ be a stochastic basis with a complete right-continuous filtration and let $W$ be an $(\mathfrak{F}_t)$-cylindrical Wiener process.  We define $\nu_\epsilon= \mathbb{P} \circ (\rho_0,u_0)^{-1}$ and by definition of $\nu_\epsilon$ we have
\begin{equation}\label{reg mu}
\int_{L^1_x \times L^1_x} \bigg| \int_{\mathbb{T}} \bigg[\dfrac{1}{2}\rho_\epsilon |u_\epsilon|^2+ \dfrac{\rho_\epsilon^\gamma}{\gamma-1}+ | \partial_x \sqrt{\rho_\epsilon}|^2 dx \bigg|^r d\nu_\epsilon(\rho,\rho u ) < \infty, \quad r\ge 1.
\end{equation}
and 
\begin{equation}
\begin{split}
& \int_{L^1_x \times L^1_x} \bigg| \int_{\mathbb{T}} \bigg[\dfrac{1}{2}\rho_\epsilon |u_\epsilon|^2+ \dfrac{\rho_\epsilon^\gamma}{\gamma-1}+ | \partial_x \sqrt{\rho_\epsilon}|^2 dx \bigg|^r d\nu_\epsilon(\rho,\rho u ) \\ & \longrightarrow \int_{L^1_x \times L^1_x} \bigg| \int_{\mathbb{T}} \bigg[\dfrac{1}{2}\rho |u|^2+ \dfrac{\rho^\gamma}{\gamma-1}+ | \partial_x \sqrt{\rho}|^2 dx \bigg|^r d\nu(\rho,\rho u ).
\end{split}
\end{equation}
Let $(\rho_\epsilon,u_\epsilon)$ be a strong solution of \eqref{eps stoc quantum}-\eqref{epsilon C.I}, in accordance to the regularity estimates \eqref{eps indep reg} we define the following path space $$\mathcal{X}=\mathcal{X}_\rho\times\mathcal{X}_m\times\mathcal{X}_\zeta \times\mathcal{X}_\Lambda \times \mathcal{X}_W \times \mathcal{X}_{\mu^{d,\delta}} \times \mathcal{X}_{\mu^{s,\delta}},$$
where $$\mathcal{X}_\rho= L^2(0,T;H^1(\mathbb{T})),\quad \mathcal{X}_m= L^p(0,T;L^p(\mathbb{T})), \quad \mathcal{X}_\zeta= (L^2(0,T;L^2(\mathbb{T})), w),$$
$$\mathcal{X}_\Lambda= (L^2(0,T;L^2(\mathbb{T})), w),\quad \mathcal{X}_W= C([0,T];\mathfrak{U}_0),$$ $$\mathcal{X}_{\mu^{d,\delta}}=(\mathcal{M}_b((0,T) \times \mathbb{T})), w^{\star}), \quad \mathcal{X}_{\mu^{s,\delta}}=(\mathcal{M}_b((0,T) \times \mathbb{T})), w^{\star}),$$
\\
and $$\mu_\epsilon^{d,\delta}= \rho_\epsilon^{\frac{\alpha}{2}} M_\epsilon \beta^{''}_\delta (u_\epsilon) \partial_x u_\epsilon  \quad \mu^{s,\delta}_\epsilon= \sum_{k \in \mathbb{N}} \dfrac{1}{2} \rho_\epsilon \beta_\delta^{''}(u_\epsilon) | F_k|^2.$$
Now we prove that the set $ \{ \mathcal{L}[ \rho_\epsilon, m_\epsilon, \zeta_\epsilon,\Lambda_\epsilon, W,{\mu_\epsilon^{d,\delta}},{\mu_\epsilon^{s,\delta}}]; \epsilon \in (0,1) \}$ is tight on $\mathcal{X}.$
In the following Proposition we establish the tightness of $\mathcal{L}[\rho_\epsilon]$ on $\mathcal{X}_\rho.$
\begin{proposition}
The set $\{\mathcal{L}[\rho_\epsilon]; \epsilon \in (0,1) \}$ is tight on $\mathcal{X}_\rho.$
\end{proposition}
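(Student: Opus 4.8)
The goal is to show tightness of $\{\mathcal{L}[\rho_\epsilon]\}$ on $\mathcal{X}_\rho = L^2(0,T;H^1(\mathbb{T}))$, and the natural tool is a compact embedding together with the Aubin–Lions–Simon lemma. The plan is to produce a space $Y \hookrightarrow\hookrightarrow \mathcal{X}_\rho$ such that for every $\eta>0$ there is a bounded set $B_\eta \subset Y$ (relatively compact in $\mathcal{X}_\rho$) with $\mathbb{P}[\rho_\epsilon \in B_\eta] \ge 1-\eta$ uniformly in $\epsilon$; then $\{\mathcal{L}[\rho_\epsilon]\}$ is tight by definition. The relevant bound is the one on $\rho_\epsilon$ in $L^2_tH^2_x$ together with a bound on the time derivative $\partial_t\rho_\epsilon$ in a weaker space, which is exactly what the uniform estimates in \eqref{eps indep reg}, \eqref{eps indep reg 2} provide.

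First I would assemble the pathwise estimates. From \eqref{eps indep reg} we have, for every $p\in[1,\infty)$, the bounds $\|\partial_x\sqrt{\rho_\epsilon}\|_{L^p_\omega L^\infty_t L^2_x}\le C$, $\|\partial_{xx}\rho_\epsilon^{\alpha/2}\|_{L^p_\omega L^2_t L^2_x}\le C$, $\|\sqrt{\rho_\epsilon}\|_{L^p_\omega L^\infty_t L^\infty_x}\le C$ and $\|\epsilon^{1/2}\partial_x(\rho^{(\gamma-1)/2})\|_{L^p_\omega L^2_t L^2_x}\le C$; combining the pointwise upper bound on $\rho_\epsilon$ with these yields $\|\rho_\epsilon\|_{L^p_\omega L^2_t H^2_x}\le C$ after rewriting derivatives of $\rho_\epsilon$ in terms of derivatives of $\sqrt{\rho_\epsilon}$ and $\rho_\epsilon^{\alpha/2}$ (here one uses that $\alpha\in(\tfrac12,1]$ so the chain-rule weights are controlled by the no-vacuum and upper bounds, which hold uniformly in $\epsilon$). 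From \eqref{eps indep reg 2} and the continuity equation we have $\|\partial_t\rho_\epsilon\|_{L^p_\omega L^2_t L^1_x}=\|\partial_x(\rho_\epsilon u_\epsilon)\|_{L^p_\omega L^2_t L^1_x}\le C$. Fix a chain of embeddings $H^2(\mathbb{T})\hookrightarrow\hookrightarrow H^1(\mathbb{T})\hookrightarrow L^1(\mathbb{T})$, the first being compact; then Aubin–Lions–Simon gives that the set
\[
B_R=\Big\{\,\rho:\ \|\rho\|_{L^2(0,T;H^2)}\le R,\quad \|\partial_t\rho\|_{L^2(0,T;L^1)}\le R\,\Big\}
\]
is relatively compact in $L^2(0,T;H^1(\mathbb{T}))=\mathcal{X}_\rho$.

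Next I would conclude by a Markov/Chebyshev argument. Since $\mathbb{E}\|\rho_\epsilon\|_{L^2_tH^2_x}^2+\mathbb{E}\|\partial_t\rho_\epsilon\|_{L^2_tL^1_x}^2\le C$ uniformly in $\epsilon$, Chebyshev's inequality yields $\mathbb{P}[\rho_\epsilon\notin B_R]\le C/R^2$, so given $\eta>0$ choosing $R=R(\eta)$ large makes $\mathbb{P}[\rho_\epsilon\in B_{R(\eta)}]\ge 1-\eta$ for all $\epsilon\in(0,1)$. As $B_{R(\eta)}$ is relatively compact in $\mathcal{X}_\rho$, its closure is the required compact set, and tightness of $\{\mathcal{L}[\rho_\epsilon];\epsilon\in(0,1)\}$ on $\mathcal{X}_\rho$ follows.

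**Main obstacle.** The only genuinely non-routine point is establishing the uniform $L^2_tH^2_x$ bound for $\rho_\epsilon$: one does not directly control $\partial_{xx}\rho_\epsilon$, but rather $\partial_{xx}\rho_\epsilon^{\alpha/2}$ and $\partial_x\sqrt{\rho_\epsilon}$. Converting these into a bound on $\rho_\epsilon$ in $H^2_x$ requires writing $\partial_{xx}\rho_\epsilon$ as a combination of $\rho_\epsilon^{1-\alpha/2}\partial_{xx}\rho_\epsilon^{\alpha/2}$ and lower-order terms of the form $\rho_\epsilon^{1-\alpha}|\partial_x\rho_\epsilon^{\alpha/2}|^2$, then invoking the uniform-in-$\epsilon$ upper bound $\sqrt{\rho_\epsilon}\in L^p_\omega L^\infty_{t,x}$ (estimate \eqref{upper rho}) together with the quartic gradient control $\|\partial_x\rho_\epsilon^{\alpha/4}\|_{L^p_\omega L^4_tL^4_x}\le C$ from \eqref{eps indep reg}; crucially one must avoid using the $\epsilon$-degenerate lower bound \eqref{lower rho}. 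Everything else — the Aubin–Lions compactness and the Chebyshev step — is standard.
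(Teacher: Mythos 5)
Your proposal is correct, and it reaches the conclusion by a somewhat different route than the paper. The paper argues in two stages: first it obtains tightness on $L^2(0,T;L^2(\mathbb{T}))$ via the Aubin--Lions triple $H^1(\mathbb{T})\hookrightarrow\hookrightarrow L^2(\mathbb{T})\hookrightarrow L^1(\mathbb{T})$ (using $\rho_\epsilon\in L^\infty_tH^1_x$ and $\partial_t\rho_\epsilon\in L^1_x$ from the continuity equation) together with Markov's inequality, and then upgrades to $\mathcal{X}_\rho=L^2(0,T;H^1(\mathbb{T}))$ by an interpolation step: strong $L^2_{t,x}$ convergence of $\rho_\epsilon^{\alpha/2}$ combined with the uniform bound on $\partial_{xx}\rho_\epsilon^{\alpha/2}$ in $L^2_{t,x}$ gives strong convergence of $\partial_x\rho_\epsilon^{\alpha/2}$ via $\|\partial_x f\|_{L^2}^2\lesssim\|f\|_{L^2}\|\partial_{xx}f\|_{L^2}$, after which compact sets in $\mathcal{X}_\rho$ are built by intersecting. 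You instead do everything in one pass: you convert the bounds on $\partial_{xx}\rho_\epsilon^{\alpha/2}$, $\partial_x\rho_\epsilon^{\alpha/4}$, $\partial_x\sqrt{\rho_\epsilon}$ and the $L^\infty$ bound on $\sqrt{\rho_\epsilon}$ into a uniform $L^2_tH^2_x$ bound on $\rho_\epsilon$ itself, via $\partial_{xx}\rho_\epsilon=\tfrac{2}{\alpha}\rho_\epsilon^{1-\alpha/2}\partial_{xx}\rho_\epsilon^{\alpha/2}+c_\alpha\,\rho_\epsilon^{1-\alpha/2}|\partial_x\rho_\epsilon^{\alpha/4}|^2$, where all exponents of $\rho_\epsilon$ are nonnegative for $\alpha\in(\tfrac12,1]$ so the degenerate lower bound \eqref{lower rho} is never invoked; then a single application of Aubin--Lions--Simon with $H^2\hookrightarrow\hookrightarrow H^1\hookrightarrow L^1$ lands directly in $\mathcal{X}_\rho$. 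Your identification of the $H^2$ bound as the only nonroutine point, and the decomposition you sketch for it, are exactly right. What each approach buys: the paper's version never needs second derivatives of $\rho_\epsilon$ itself (only of $\rho_\epsilon^{\alpha/2}$), at the price of a two-step argument whose final compactness claim (that $B_L$ intersected with a bounded set of $L^2_tH^1_x$ is compact in $L^2_tH^1_x$ ``as a closed subset of a compact set'') is stated rather loosely and really rests on the interpolation inequality; your version requires the extra chain-rule computation but then the compactness and Chebyshev steps are entirely standard and cleanly stated.
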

\begin{proof}
We divide the proof in several steps.  First, we observe that from \eqref{upper rho} and \eqref{epsilon energy visk} we have $$\sqrt{\rho_\epsilon} \in L^p(\Omega;L^{\infty}(0,T;L^\infty(\mathbb{T}))), \quad \partial_x \sqrt{\rho_\epsilon} \in L^p(\Omega;L^{\infty}(0,T;L^2(\mathbb{T}))), \; \text{for all} \; p\in [1, \infty).$$
Hence we conclude that $\rho_\epsilon$ is uniformly bounded in $L^p(\Omega;L^{\infty}(0,T;H^1(\mathbb{T}))).$ Moreover from the continuity equation we have 
\begin{equation*}
\partial_t \rho_\epsilon+ u_\epsilon \partial_x \rho_\epsilon + \rho_\epsilon \partial_x u_\epsilon =0,
\end{equation*}
from which we deduce that $$\mathbb{E} \| \partial_t \rho_\epsilon \|^p_{ L^\infty(0,T; L^1(\mathbb{T}))}\le C.$$
Therefore we can make use of the following compact embedding 
$$L^2(0,T;H^1(\mathbb{T}) \cap H^1(0,T; L^1(\mathbb{T})) \overset{C}{\hookrightarrow} L^2(0,T;L^2(\mathbb{T})),$$
which follows directly from Aubin-Lion's Lemma,  to deduce that $ \{\mathcal{L}[\rho_\epsilon]; \epsilon \in (0,1) \} $ is tight on $L^2(0,T;L^2(\mathbb{T})).$
More precisely,  for all $L > 0,$ the set $$B_L= \{ \rho \in L^2(0,T;H^1(\mathbb{T}) \cap H^1(0,T; L^1(\mathbb{T})) \; \big| \;  \| \rho \|_{ L^2(0,T;H^1(\mathbb{T}) \cap H^1(0,T; L^1(\mathbb{T}))} \le L \}$$
is relatively compact in $L^2(0,T;L^2(\mathbb{T}))$ and we have 
\begin{equation}\label{abst tightness}
\begin{split}
\mathcal{L}[\rho_\epsilon](B_L^c) & =\mathbb{P}( \| \rho_\epsilon \|_{ L^2(0,T;H^1(\mathbb{T}) \cap H^1(0,T; L^1(\mathbb{T}))} \ge L) \\ & \le \frac{1}{L} \mathbb{E} \| \rho_\epsilon \|_{ L^2(0,T;H^1(\mathbb{T}) \cap H^1(0,T; L^1(\mathbb{T}))} \le \frac{C}{L}
\end{split}
\end{equation}
and by choosing $L$ sufficiently large we get the result. \\ \\
As a next step we want to prove the tightness of $\{\mathcal{L}[\rho_\epsilon]; \epsilon \in (0,1) \}$ on $L^2(0,T;H^1(\mathbb{T})).$
We start by recalling that from \eqref{eps indep reg} we have that $\partial_{xx}\rho_\epsilon^{\frac{\alpha}{2}}$ is uniformly bounded in $L^p(\Omega;L^{2}(0,T;L^2(\mathbb{T}))).$ 
This allow us to prove strong convergence of $\partial_x \rho_\epsilon^\frac{\alpha}{2}, $ indeed
\begin{equation}\label{strong rho H^1}
\begin{split}
& \int_{0}^{T} \int_{\mathbb{T}} | \partial_x \rho_\epsilon^\frac{\alpha}{2}-\partial_x \rho^\frac{\alpha}{2}|^2dxdt=\int_{0}^{T} \int_{\mathbb{T}} | \partial_x(\rho_\epsilon^\frac{\alpha}{2}-\rho^\frac{\alpha}{2})|^2dxdt \\ & \le \bigg( \int_{0}^{T} \int_{\mathbb{T}} | \rho_\epsilon^\frac{\alpha}{2}-\rho^\frac{\alpha}{2}|^2dxdt \bigg)^{\frac{1}{2}} \bigg( \int_{0}^{T} \int_{\mathbb{T}} | \partial_{xx} \rho_\epsilon^\frac{\alpha}{2}|^2+ |\partial_{xx} \rho^\frac{\alpha}{2}|^2 dxdt \bigg)^{\frac{1}{2}},
\end{split}
\end{equation}
hence by taking the p-th power,  the expectation and using the uniform boundness of the second derivative terms in \eqref{strong rho H^1},  we send $\epsilon \rightarrow 0$ and we get
\begin{equation}
\rho_\epsilon^\frac{\alpha}{2}\rightarrow \rho^\frac{\alpha}{2} \in L^p(\Omega; L^2(0,T; H^1(\mathbb{T}))).
\end{equation}
Therefore the tightness of  $\{\mathcal{L}[\rho_\epsilon]; \epsilon \in (0,1) \}$ on $\mathcal{X}_\rho$ follows form \eqref{abst tightness} and an interpolation argument.  Indeed the set $K=B_L \cap \{ \rho_\epsilon \in L^2(0,T;H^1(\mathbb{T}) \: \big| \; \| \partial_x \rho_\epsilon \|_{L^2(0,T;L^2(\mathbb{T})} \le \tilde{L} \}$ is compact in $L^2(0,T;H^1(\mathbb{T}))$ (it is a closed subset of a compact set) and thus 
\begin{equation}\label{Tight markov}
\mathcal{L}[\rho_\epsilon](K^c)\le \frac{C}{L}+ \mathbb{P}( \| \partial_x \rho_\epsilon \|_{L^2_{t,x}} \ge \tilde{L}) \le \frac{C}{L}+ \frac{1}{\tilde{L}} \mathbb{E} \| \partial_x \rho_\epsilon \|_{L^2_{t,x}} \le \frac{C}{L}+\frac{\tilde{C}}{\tilde{L}}
\end{equation}
and by choosing $L, \tilde{L}$ sufficiently large we get the result. 
\end{proof}
\begin{proposition}
The set $\{\mathcal{L}[m_\epsilon]; \epsilon \in (0,1) \}$ is tight on $\mathcal{X}_m.$
\end{proposition}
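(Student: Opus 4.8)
The plan is to produce, for every $\eta>0$, a relatively compact set $K\subset\mathcal{X}_m=L^p(0,T;L^p(\mathbb{T}))$ such that $\mathcal{L}[m_\epsilon](K^c)<\eta$ uniformly in $\epsilon$, where $m_\epsilon=\rho_\epsilon u_\epsilon$; such a $K$ will be a ball in a space combining a spatial gain of one derivative with H\"older regularity in time valued in a fixed negative Sobolev space, and the bound on $\mathcal{L}[m_\epsilon](K^c)$ will follow from the Markov inequality once uniform moment bounds are established.

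For the spatial part I would start from the $\epsilon$-independent estimates \eqref{eps indep reg}--\eqref{eps indep reg 2}: combined with the uniform bound on $\sqrt{\rho_\epsilon}$ in $L^p_\omega L^\infty_t L^\infty_x$ they give $\rho_\epsilon$ bounded in $L^p_\omega L^\infty_t L^\infty_x$ and $m_\epsilon=\sqrt{\rho_\epsilon}\,(\sqrt{\rho_\epsilon}u_\epsilon)$ bounded in $L^p_\omega L^\infty_t L^2_x$, while the bound on $\partial_x(\rho_\epsilon u_\epsilon)$ in $L^p_\omega L^2_t L^1_x$ yields $m_\epsilon$ bounded in $L^p_\omega L^2(0,T;W^{1,1}(\mathbb{T}))$. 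Since in one space dimension $W^{1,1}(\mathbb{T})\hookrightarrow\hookrightarrow L^q(\mathbb{T})$ for every finite $q$ and $W^{1,1}(\mathbb{T})\hookrightarrow L^\infty(\mathbb{T})$, interpolating the latter embedding with the $L^\infty_t L^2_x$ bound gives in addition $m_\epsilon$ bounded in $L^p_\omega L^4(0,T;L^4(\mathbb{T}))$.

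For the time regularity I would write $m_\epsilon=m_\epsilon^{(1)}+m_\epsilon^{(2)}$ according to the momentum equation of \eqref{eps stoc quantum}, with $m_\epsilon^{(1)}(t)=\rho_0u_0+\int_0^t[\,\cdots\,]\,\mathrm{d}s$ collecting the convective, pressure, viscous, $\epsilon$-regularising and capillarity drift terms and $m_\epsilon^{(2)}(t)=\int_0^t\mathbb{G}(\rho_\epsilon,\rho_\epsilon u_\epsilon)\,\mathrm{d}W$. Estimating the drift terms one at a time with the uniform bounds recalled above --- $\rho_\epsilon u_\epsilon^2=(\sqrt{\rho_\epsilon}u_\epsilon)^2\in L^\infty_t L^1_x$, $\rho_\epsilon^\gamma\in L^\infty_{t,x}$, $\rho_\epsilon^\alpha\partial_x u_\epsilon=\rho_\epsilon^{\alpha/2}(\rho_\epsilon^{\alpha/2}\partial_x u_\epsilon)\in L^2_t L^2_x$, the $\epsilon$-term through $\sqrt{\epsilon}\,\partial_x u_\epsilon\in L^2_t L^2_x$, and the capillarity term through $\rho_\epsilon\in L^\infty_t H^1_x$ and $|\partial_x\sqrt{\rho_\epsilon}|^2\in L^\infty_t L^1_x$ --- one obtains that $m_\epsilon^{(1)}$ is bounded in $L^q_\omega W^{1,q}(0,T;W^{-l,2}(\mathbb{T}))\hookrightarrow L^q_\omega C^{1-1/q}([0,T];W^{-l,2}(\mathbb{T}))$ for some finite $q$ and some $l\ge 3$ (the initial term $\rho_0u_0$ being controlled in $L^q_\omega H^s_x$ by \eqref{C.I STRONG}). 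For the stochastic part I would use the growth estimate \eqref{G W-}, which together with the energy bound \eqref{epsilon energy visk} shows that $\|\mathbb{G}(\rho_\epsilon,\rho_\epsilon u_\epsilon)\|^2_{L_2(\mathfrak{U};W^{-l,2}_x)}\lesssim\int_{\mathbb{T}}(\rho_\epsilon+\rho_\epsilon|u_\epsilon|^2)\,\mathrm{d}x$ is bounded in $L^q_\omega L^\infty_t$, and then apply the Burkholder--Davis--Gundy inequality of Proposition~\ref{BDG} to the increments $m_\epsilon^{(2)}(t)-m_\epsilon^{(2)}(s)$ followed by the Kolmogorov continuity criterion to get $m_\epsilon^{(2)}$ bounded in $L^q_\omega C^\theta([0,T];W^{-l,2}(\mathbb{T}))$ for some $\theta\in(0,1/2)$ and $q$ large. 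Altogether $m_\epsilon$ is bounded, uniformly in $\epsilon$, in $L^q_\omega C^\theta([0,T];W^{-l,2}(\mathbb{T}))$.

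To conclude, a version of the Aubin--Lions--Simon lemma in which H\"older continuity in time replaces an $L^1_t$-derivative bound --- the stochastic term not being of bounded variation in time is exactly why the H\"older version is needed --- shows that $\{v:\|v\|_{L^2(0,T;W^{1,1}(\mathbb{T}))}+\|v\|_{C^\theta([0,T];W^{-l,2}(\mathbb{T}))}\le R\}$ is relatively compact in $L^2(0,T;L^q(\mathbb{T}))$ for every finite $q$; intersecting it with $\{\|v\|_{L^4(0,T;L^4(\mathbb{T}))}\le R\}$ and invoking the Vitali convergence theorem gives a set $K_R$ relatively compact in $L^p(0,T;L^p(\mathbb{T}))$ for every $p<4$, hence in $\mathcal{X}_m$. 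The Markov inequality and the bounds above then give $\mathcal{L}[m_\epsilon](K_R^c)\le C/R$ uniformly in $\epsilon$, which tends to $0$ as $R\to\infty$, establishing tightness. I expect the main obstacle to be the uniform-in-$\epsilon$ control of the time regularity of $m_\epsilon$ --- keeping the third-order capillarity term and the degenerate viscous term bounded in a fixed negative Sobolev norm, and handling the stochastic integral via the Burkholder--Davis--Gundy inequality and the growth hypotheses on $\mathbb{G}$ --- while the spatial estimates and the final Markov argument are routine.
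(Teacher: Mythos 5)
Your argument is correct and follows the same overall template as the paper's proof (uniform spatial bounds giving $m_\epsilon$ in $L^2_tW^{1,1}_x\cap L^\infty_tL^2_x$, a time-regularity bound in a negative Sobolev space, an Aubin--Lions type compact embedding, and the Markov inequality), but the treatment of the time variable is genuinely different. The paper asserts directly from the momentum equation that $\mathbb{E}\|\partial_t(\rho_\epsilon u_\epsilon)\|^p_{L^2(0,T;W^{-1,1}(\mathbb{T}))}\le C$ and applies the classical Aubin--Lions lemma with $H^1(0,T;W^{-1,1}(\mathbb{T}))$; this is formally problematic for the martingale part, since $\int_0^t\mathbb{G}\,\mathrm{d}W$ is not absolutely continuous in time and its distributional time derivative does not lie in $L^2_tW^{-1,1}_x$. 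Your decomposition $m_\epsilon=m_\epsilon^{(1)}+m_\epsilon^{(2)}$, with the drift part in $W^{1,q}_tW^{-l,2}_x$ and the stochastic integral in $C^\theta([0,T];W^{-l,2}(\mathbb{T}))$ via Burkholder--Davis--Gundy and Kolmogorov, followed by the H\"older-in-time version of the Aubin--Lions--Simon lemma, is the standard rigorous repair (it is exactly the device used in the Breit--Feireisl--Hofmanov\'a monograph the paper cites), so your route buys correctness at the price of a slightly heavier compactness lemma. One small shortfall: you only interpolate up to $L^4(0,T;L^4(\mathbb{T}))$, which yields relative compactness in $L^p(0,T;L^p(\mathbb{T}))$ only for $p<4$, whereas the statement (see Proposition~\ref{conv stoch jak}) requires all $p\in[1,\infty)$; the same interpolation between $L^\infty_tL^2_x$ and $L^2_tL^\infty_x$ gives a uniform bound in $L^r_{t,x}$ for every finite $r$, which is what the paper uses, so the fix is immediate.
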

\begin{proof}
The tightness of the marginals $\mathcal{L}[m_\epsilon]$ on $\mathcal{X}_m$ follows by a compact embedding and an interpolation argument.  More precisely we have that
$$ \mathbb{E} \| \partial_x (\rho_\epsilon u_\epsilon) \|^p_{L^2(0,T;L^1(\mathbb{T})} \le C$$
and 
$$ \mathbb{E} \| \rho_\epsilon u_\epsilon \|^p_{L^\infty(0,T;L^2(\mathbb{T})} \le C.$$
Moreover from the momentum equation \begin{equation*}
\partial_t(\rho_\epsilon u_\epsilon)+ \partial_x(\rho_\epsilon u^2_\epsilon)+ \partial_x \rho^\gamma_\epsilon-N_\epsilon= \partial_x(\rho_\epsilon^{\frac{\alpha}{2}}M_\epsilon)+ \mathbb{G}(\rho_\epsilon, \rho_\epsilon u_\epsilon)
\end{equation*}
with $$M_\epsilon= \zeta_\epsilon + \rho_\epsilon^{\frac{1-\alpha}{2}}(\partial_{xx} \sqrt{\rho_\epsilon}-4| \partial_x \rho_\epsilon^{\frac{1}{4}}|^2), \quad N_\epsilon= \epsilon \partial_{xx}u_\epsilon,$$
we observe that $$\mathbb{E} \| \partial_t (\rho_\epsilon u_\epsilon) \|^p_{L^2(0,T;W^{-1,1}(\mathbb{T}))} \le C$$
and therefore by using the following compact embedding 
$$L^2(0,T;W^{1,1}(\mathbb{T})) \cap H^1(0,T; W^{-1,1}(\mathbb{T})) \overset{C}{\hookrightarrow} L^2(0,T;L^1(\mathbb{T})),$$
which is a direct consequence of Aubin-Lion's Lemma,  we have that for any $L >0$ $$B_L= \{ m \in L^2(0,T;W^{1,1}(\mathbb{T})) \cap H^1(0,T; W^{-1,1}(\mathbb{T})) \; \big| \| m \|_{L^2(0,T;W^{1,1}(\mathbb{T})) \cap H^1(0,T; W^{-1,1}(\mathbb{T}))} \le L \}$$
is relatively compact in $L^2(0,T;L^1(\mathbb{T})).$ \\
\\
Moreover 
\begin{equation*}
\begin{split}
& \mathcal{L}[\rho_\epsilon u_\epsilon](B_L^c)=\mathbb{P}( \| \rho_\epsilon u_\epsilon \|_{L^2(0,T;W^{1,1}(\mathbb{T})) \cap H^1(0,T; W^{-1,1}(\mathbb{T}))} \ge L) \\ & \le \frac{1}{L} \mathbb{E} \| \rho_\epsilon u_\epsilon \|_{L^2(0,T;W^{1,1}(\mathbb{T})) \cap H^1(0,T; W^{-1,1}(\mathbb{T}))} \le \frac{C}{L}
\end{split}
\end{equation*}
hence the tightness of $\{\mathcal{L}[m_\epsilon]; \epsilon \in (0,1) \}$ on $L^2(0,T;L^1(\mathbb{T}))$ follows by choosing $L$ sufficiently large.
In order to improve the integrability we observe that since $$\rho_\epsilon u_\epsilon \in L^p(\Omega;L^\infty(0,T;L^2(\mathbb{T})) \cap L^p(\Omega;L^2(0,T;L^\infty(\mathbb{T}))$$
uniformly in $\epsilon,$ then by using the following interpolation inequality 
$$ \| \rho_\epsilon u_\epsilon \|_{L^r_{t,x}} \le \| \rho_\epsilon u_\epsilon \|^{\frac{r-2}{r}}_{L^\infty_t L^2_x}\| \rho_\epsilon u_\epsilon \|^{\frac{2}{r}}_{L^2_t L^\infty_x}
\quad \text{for any}\; r \in [2,\infty),$$
we get that $\rho_\epsilon u_\epsilon$ is uniformly bounded in $L^p(\Omega; L^r(0,T;L^r(\mathbb{T}))$ for any $r\in [1,\infty)$ and hence the desired tightness follows by rewriting $$B_L= \{ m \in Y= L^2(0,T;W^{1,1}(\mathbb{T})) \cap H^1(0,T; W^{-1,1}(\mathbb{T})) \cap L^r(0,T;L^r(\mathbb{T}) \; \big| \| m \|_Y \le L \}$$
and observing that it is relatively compact in $L^p(0,T;L^p(\mathbb{T}))$ for any $p\in[1,\infty)$ and applying a similar argument to the one used to deduce \eqref{Tight markov}.
\end{proof}
\begin{proposition}\label{tight lambda}
The set $\{\mathcal{L}[\zeta_\epsilon]; \epsilon \in (0,1) \}$ is tight on $\mathcal{X}_\zeta.$
\end{proposition}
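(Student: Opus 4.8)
The plan is to reduce the statement to a uniform moment bound for $\zeta_\epsilon$ in $L^2(0,T;L^2(\mathbb{T}))$ combined with the elementary fact that closed balls of a separable Hilbert space are compact and metrizable in its weak topology.

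First I would identify $\zeta_\epsilon$ explicitly. Since $(\rho_\epsilon,u_\epsilon)$ is a strong pathwise solution of \eqref{eps stoc quantum}, Theorem \ref{global well posedness epsilon} gives $\rho_\epsilon>0$ $\mathbb{P}$-a.s., so $\Lambda_\epsilon=m_\epsilon/\sqrt{\rho_\epsilon}=\sqrt{\rho_\epsilon}\,u_\epsilon$. Inserting this into the defining relation $\rho_\epsilon^{\frac{\alpha}{2}}\zeta_\epsilon=\partial_x(\rho_\epsilon^{\alpha-\frac12}\Lambda_\epsilon)-2\rho_\epsilon^{\frac{\alpha-1}{2}}\Lambda_\epsilon\,\partial_x\rho_\epsilon^{\frac{\alpha}{2}}$ and using $\partial_x\rho_\epsilon^{\alpha}=2\rho_\epsilon^{\frac{\alpha}{2}}\partial_x\rho_\epsilon^{\frac{\alpha}{2}}$, the two terms carrying $u_\epsilon\,\partial_x\rho_\epsilon^{\alpha}$ cancel and one is left with $\rho_\epsilon^{\frac{\alpha}{2}}\zeta_\epsilon=\rho_\epsilon^{\alpha}\partial_x u_\epsilon$, hence $\zeta_\epsilon=\rho_\epsilon^{\frac{\alpha}{2}}\partial_x u_\epsilon$. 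In particular $\zeta_\epsilon$ is a well-defined $\mathcal{X}_\zeta$-valued random variable.

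Next, the bound $\|\zeta_\epsilon\|_{L^p_\omega L^2_t L^2_x}=\|\rho_\epsilon^{\frac{\alpha}{2}}\partial_x u_\epsilon\|_{L^p_\omega L^2_t L^2_x}\le C$, uniformly in $\epsilon$ for every $p\in[1,\infty)$, is one of the $\epsilon$-independent estimates already collected in \eqref{eps indep reg}. Then, for $R>0$, set $B_R=\{\zeta\in L^2(0,T;L^2(\mathbb{T}))\ :\ \|\zeta\|_{L^2(0,T;L^2(\mathbb{T}))}\le R\}$; since $L^2(0,T;L^2(\mathbb{T}))$ is a separable Hilbert space, $B_R$ is a compact, metrizable subset of $\mathcal{X}_\zeta=(L^2(0,T;L^2(\mathbb{T})),w)$. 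By the Markov inequality,
\begin{equation*}
\mathcal{L}[\zeta_\epsilon](B_R^c)=\mathbb{P}\big(\|\zeta_\epsilon\|_{L^2_{t,x}}>R\big)\le \frac{1}{R}\,\mathbb{E}\|\zeta_\epsilon\|_{L^2_{t,x}}\le \frac{C}{R},
\end{equation*}
uniformly in $\epsilon$, and choosing $R$ large yields the tightness of $\{\mathcal{L}[\zeta_\epsilon];\ \epsilon\in(0,1)\}$ on $\mathcal{X}_\zeta$.

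The only genuinely non-routine point is the identification $\zeta_\epsilon=\rho_\epsilon^{\frac{\alpha}{2}}\partial_x u_\epsilon$, which crucially uses the strict positivity of $\rho_\epsilon$ furnished by Theorem \ref{global well posedness epsilon}; once this is available, the tightness follows directly from the a priori estimate \eqref{eps indep reg} and the weak compactness of balls, the latter being standard and relying only on the reflexivity and separability of $L^2(0,T;L^2(\mathbb{T}))$.
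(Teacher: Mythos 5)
Your proof is correct and follows essentially the same route as the paper: the uniform $L^p_\omega L^2_t L^2_x$ bound on $\zeta_\epsilon=\rho_\epsilon^{\frac{\alpha}{2}}\partial_x u_\epsilon$ from \eqref{eps indep reg}, weak compactness of closed balls in $L^2(0,T;L^2(\mathbb{T}))$ via Banach--Alaoglu, and the Markov inequality. Your explicit verification of the identification $\zeta_\epsilon=\rho_\epsilon^{\frac{\alpha}{2}}\partial_x u_\epsilon$ from the defining relation (using the strict positivity of $\rho_\epsilon$) is a welcome detail that the paper leaves implicit, but it does not change the argument.
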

\begin{proof}
For any $L>0,$ the set $$B_L= \{ \zeta \in L^2(0,T;L^2(\mathbb{T}); \| \zeta \|_{L^2_{t,x}} \le L \}$$
is relatively compact in $\mathcal{X}_\zeta$ by Banach-Alaoglu-Bourbaki Theorem. Moreover by using Markov inequality and energy estimate we have 
\begin{equation*}
\mathcal{L}[\zeta_\epsilon](B_L^c)=\mathbb{P}( \| \zeta_\epsilon \|_{L^2_{t,x}} \ge L) \le \frac{1}{L} \mathbb{E} \| \zeta_\epsilon \|_{L^2_{t,x}} \le \frac{C}{L}
\end{equation*}
Hence the tightness of $\{\mathcal{L}[\zeta_\epsilon]; \epsilon \in (0,1) \}$ on $\mathcal{X}_\zeta.$ follows by choosing $L$ sufficiently large.
\end{proof}
\begin{proposition}\label{tight zeta}
The set $\{\mathcal{L}[\Lambda_\epsilon]; \epsilon \in (0,1) \}$ is tight on $\mathcal{X}_\Lambda.$
\end{proposition}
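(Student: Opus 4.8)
The plan is to follow verbatim the scheme already used for $\zeta_\epsilon$ in Proposition \ref{tight lambda}, exploiting that $\mathcal{X}_\Lambda$ carries the \emph{weak} topology of the separable Hilbert space $L^2(0,T;L^2(\mathbb{T}))$: on such a space norm-bounded sets are relatively (weakly) compact by the Banach--Alaoglu--Bourbaki theorem, so tightness of $\{\mathcal{L}[\Lambda_\epsilon]\}$ reduces to a single uniform-in-$\epsilon$ moment bound on $\Lambda_\epsilon$ combined with Markov's inequality.

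First I would observe that on the approximate level the density is strictly positive (Theorem \ref{global well posedness epsilon}, estimate \eqref{lower rho}), so the defining relation $\sqrt{\rho_\epsilon}\,\Lambda_\epsilon = m_\epsilon = \rho_\epsilon u_\epsilon$ is satisfied unambiguously by $\Lambda_\epsilon = \sqrt{\rho_\epsilon}\,u_\epsilon$, with no issue on the (empty) set $\{\rho_\epsilon=0\}$. From the energy estimate collected in \eqref{eps indep reg} we have $\|\sqrt{\rho_\epsilon}u_\epsilon\|_{L^p_\omega L^\infty_t L^2_x}\le C$ uniformly in $\epsilon$, and since $(0,T)$ has finite measure this embeds into $L^p_\omega L^2_t L^2_x$; taking $p=1$ gives the uniform bound $\mathbb{E}\,\|\Lambda_\epsilon\|_{L^2(0,T;L^2(\mathbb{T}))}\le C$.

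Next, for every $L>0$ the ball $B_L=\{\Lambda\in L^2(0,T;L^2(\mathbb{T})) : \|\Lambda\|_{L^2_{t,x}}\le L\}$ is relatively compact in $\mathcal{X}_\Lambda=(L^2(0,T;L^2(\mathbb{T})),w)$ by the Banach--Alaoglu--Bourbaki theorem, exactly as in Proposition \ref{tight lambda}. Markov's inequality together with the bound above then yields
\begin{equation*}
\mathcal{L}[\Lambda_\epsilon](B_L^c)=\mathbb{P}\big(\|\Lambda_\epsilon\|_{L^2_{t,x}}\ge L\big)\le\frac{1}{L}\,\mathbb{E}\,\|\Lambda_\epsilon\|_{L^2_{t,x}}\le\frac{C}{L},
\end{equation*}
uniformly in $\epsilon\in(0,1)$, and choosing $L$ sufficiently large gives the asserted tightness.

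Since all the ingredients are already available, there is no genuine obstacle in this proposition; the only point deserving a word of justification is the identification $\Lambda_\epsilon=\sqrt{\rho_\epsilon}u_\epsilon$ on the approximating system and the attendant uniform $L^2_{t,x}$ bound, both of which are immediate consequences of Theorem \ref{global well posedness epsilon}. (The more delicate weak/weak-$*$ compactness issues, e.g.\ for the defect measures $\mu^{d,\delta}_\epsilon,\mu^{s,\delta}_\epsilon$ and the subsequent identification of $\Lambda$ via the Skorokhod representation, are postponed to the later steps of the argument.)
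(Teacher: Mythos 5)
Your proposal is correct and follows essentially the same route as the paper: the authors likewise invoke the bound $\Lambda_\epsilon=\sqrt{\rho_\epsilon}u_\epsilon\in L^p(\Omega;L^\infty(0,T;L^2(\mathbb{T})))$, the weak relative compactness of norm-balls in $\mathcal{X}_\Lambda$ via Banach--Alaoglu--Bourbaki, and Markov's inequality, exactly as in the preceding proposition for $\zeta_\epsilon$. Your added remark identifying $\Lambda_\epsilon$ unambiguously on the vacuum-free approximate level is a harmless (and welcome) elaboration of what the paper leaves implicit.
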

\begin{proof}
Similarly to the proof of Proposition \ref{tight lambda},  since $\Lambda \in L^p(\Omega;L^\infty(0,T;L^2(\mathbb{T}))),$ then by considering $$B_L= \{ \Lambda \in L^2(0,T;L^2(\mathbb{T}); \| \Lambda \|_{L^2_{t,x}} \le L \}$$ we have that $B_L$ is relatively compact on $\mathcal{X}_\Lambda$ and
\begin{equation*}
\mathcal{L}[\Lambda_\epsilon](B_L^c)=\mathbb{P}( \| \Lambda_\epsilon \|_{L^2_{t,x}} \ge L) \le \frac{1}{L} \mathbb{E} \| \Lambda_\epsilon \|_{L^2_{t,x}} \le \frac{C}{L}.
\end{equation*}
\end{proof}
\begin{proposition}\label{tight W}
The set $\{\mathcal{L}[W] \}$ is tight on $\mathcal{X}_W.$
\end{proposition}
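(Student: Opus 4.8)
The plan is essentially immediate, because the family $\{\mathcal{L}[W]\}$ consists of a single measure: the process $W$ does not depend on $\epsilon$, so $\mathcal{L}[W]$ is a single Borel probability measure on the Polish space $\mathcal{X}_W = C([0,T];\mathfrak{U}_0)$ and is therefore automatically a Radon measure, hence tight by inner regularity. For the sake of uniformity with the previous propositions, though, I would also record explicit compact exhausting sets obtained from a moment bound on a H\"older norm.

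First I would observe that, by the very definition of $\mathfrak{U}_0$, the embedding $\mathfrak{U} \hookrightarrow \mathfrak{U}_0$ is Hilbert--Schmidt, since $\sum_{k\ge 1} \|e_k\|_{\mathfrak{U}_0}^2 = \sum_{k\ge 1} k^{-2} < \infty$. Writing $W(t) - W(s) = \sum_{k\ge 1} e_k\,(W_k(t) - W_k(s))$, this gives $\mathbb{E}\,\|W(t)-W(s)\|_{\mathfrak{U}_0}^2 = |t-s|\sum_{k\ge 1} k^{-2}$, and more generally, by the Burkholder--Davis--Gundy inequality (Proposition \ref{BDG}), $\mathbb{E}\,\|W(t)-W(s)\|_{\mathfrak{U}_0}^{q} \lesssim_{q} |t-s|^{q/2}$ for every $q\ge 2$. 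By the Kolmogorov continuity criterion, for any fixed $\theta \in (0,1/2)$ and any $q$ large enough (so that $q/2 - 1 > \theta q$) one obtains
\[
\mathbb{E}\,\|W\|_{C^{\theta}([0,T];\mathfrak{U}_0)}^{q} \le C(\theta,q,T) < \infty .
\]

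Next, I would fix such a $\theta$ and, for $L>0$, set $B_L = \{ w \in C^{\theta}([0,T];\mathfrak{U}_0) : \|w\|_{C^{\theta}([0,T];\mathfrak{U}_0)} \le L\}$. By the Arzel\`a--Ascoli theorem $B_L$ is relatively compact in $\mathcal{X}_W = C([0,T];\mathfrak{U}_0)$, and by the Markov inequality
\[
\mathcal{L}[W](B_L^c) = \mathbb{P}\big( \|W\|_{C^{\theta}([0,T];\mathfrak{U}_0)} > L \big) \le \frac{1}{L}\,\mathbb{E}\,\|W\|_{C^{\theta}([0,T];\mathfrak{U}_0)} \le \frac{C}{L},
\]
so that choosing $L$ sufficiently large makes this arbitrarily small, which is exactly the claimed tightness. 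There is no genuine obstacle here; the only point requiring a little care is the Hilbert--Schmidt property of $\mathfrak{U}\hookrightarrow\mathfrak{U}_0$, which is precisely what makes the increments of $W$ square-integrable in $\mathfrak{U}_0$ and thereby gets the Kolmogorov estimate started.
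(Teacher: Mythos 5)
Your first paragraph is exactly the paper's proof: the family is a single Borel probability measure on the Polish space $C([0,T];\mathfrak{U}_0)$, hence Radon and therefore tight. That part is correct and complete, and you could stop there.

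The supplementary ``explicit'' construction, however, contains a genuine error. A ball $B_L=\{w:\|w\|_{C^{\theta}([0,T];\mathfrak{U}_0)}\le L\}$ is \emph{not} relatively compact in $C([0,T];\mathfrak{U}_0)$: the Arzel\`a--Ascoli theorem for vector-valued continuous functions requires, besides equicontinuity (which the uniform H\"older bound does give), that the sets of values $\{w(t):w\in B_L\}$ be relatively compact in the target space, and bounded sets of the infinite-dimensional Hilbert space $\mathfrak{U}_0$ are not compact. The Hilbert--Schmidt property of $\mathfrak{U}\hookrightarrow\mathfrak{U}_0$ that you correctly verify is what makes the moment estimates work, but it does not rescue this step. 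The standard repair is to interpose a space $\mathfrak{U}_1$ with $\mathfrak{U}\hookrightarrow\mathfrak{U}_1\hookrightarrow\mathfrak{U}_0$, the second embedding compact and the first still Hilbert--Schmidt (e.g.\ $\|v\|_{\mathfrak{U}_1}^2=\sum_k c_k^2 k^{-2+2\delta}$ for small $\delta>0$), run the Kolmogorov estimate in $\mathfrak{U}_1$, and take $B_L$ bounded in $C^{\theta}([0,T];\mathfrak{U}_1)$, which \emph{is} relatively compact in $C([0,T];\mathfrak{U}_0)$. Since your main argument already settles the proposition, this flaw is harmless here, but as written the supplement would not survive scrutiny.
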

\begin{proof}
This claim follows easily by observing that $\mathcal{L}[W]$ is a Radon measure on a Polish space $C([0,T]); \mathfrak{U}_0).$
\end{proof}
\begin{proposition}\label{tight meas1}
The set $\{\mathcal{L}[\mu^{d,\delta}], \; \delta \in (0,1) \}$ is tight on $\mathcal{X}_{\mu^{d,\delta}}.$
\end{proposition}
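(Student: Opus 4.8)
The strategy is to reduce the tightness to a single uniform moment estimate and then invoke the weak-$\star$ compactness of balls in the space of Radon measures. Since $C_0((0,T)\times\mathbb{T})$ is separable, the Banach--Alaoglu theorem guarantees that for every $L>0$ the ball $B_L=\{\mu\in\mathcal{M}_b((0,T)\times\mathbb{T}):\|\mu\|_{\mathcal{M}_b}\le L\}$ is metrizable and compact for $w^{\star}$. Hence, as soon as we produce a deterministic constant $C>0$, independent of $\epsilon$ and of $\delta\in(0,1)$, with
\begin{equation*}
\mathbb{E}\,\|\mu_\epsilon^{d,\delta}\|_{\mathcal{M}_b((0,T)\times\mathbb{T})}\le C,
\end{equation*}
Chebyshev's inequality gives $\mathcal{L}[\mu_\epsilon^{d,\delta}](B_L^c)=\mathbb{P}\big(\|\mu_\epsilon^{d,\delta}\|_{\mathcal{M}_b}>L\big)\le C/L$, and choosing $L$ large yields the claimed tightness on $\mathcal{X}_{\mu^{d,\delta}}$.

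For fixed $\epsilon$ the strong pathwise solution satisfies $\rho_\epsilon>0$ and $(\rho_\epsilon,u_\epsilon)\in C([0,T];H^{s+1}\times H^{s})$, so $\mu_\epsilon^{d,\delta}=\rho_\epsilon^{\frac{\alpha}{2}}M_\epsilon\beta''_\delta(u_\epsilon)\partial_x u_\epsilon$ is an integrable function, which we identify with the absolutely continuous measure it induces; thus $\|\mu_\epsilon^{d,\delta}\|_{\mathcal{M}_b}=\|\mu_\epsilon^{d,\delta}\|_{L^1((0,T)\times\mathbb{T})}$. Recalling $\zeta_\epsilon=\rho_\epsilon^{\frac{\alpha}{2}}\partial_x u_\epsilon$, I would rewrite $\mu_\epsilon^{d,\delta}=M_\epsilon\,\zeta_\epsilon\,\beta''_\delta(u_\epsilon)$ and then, using the bound $|\beta''_\delta|\le C\delta$ from Lemma~\ref{lemma beta} and the Cauchy--Schwarz inequality in $(x,t)$,
\begin{equation*}
\|\mu_\epsilon^{d,\delta}\|_{L^1_{t,x}}\le C\delta\,\|M_\epsilon\|_{L^2_{t,x}}\,\|\zeta_\epsilon\|_{L^2_{t,x}}.
\end{equation*}
The factor $\|\zeta_\epsilon\|_{L^2_{t,x}}$ is bounded in $L^p_\omega$ for every $p$ by the energy estimate \eqref{epsilon energy visk}, since $\zeta_\epsilon^2=\rho_\epsilon^{\alpha}|\partial_x u_\epsilon|^2$.

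The only substantive point, and the main obstacle, is the uniform-in-$\epsilon$ bound $\mathbb{E}\,\|M_\epsilon\|_{L^2_{t,x}}^{2}\le C$. Writing $M_\epsilon=\zeta_\epsilon+\rho_\epsilon^{\frac{1-\alpha}{2}}\big(\partial_{xx}\sqrt{\rho_\epsilon}-4|\partial_x\rho_\epsilon^{1/4}|^2\big)$, the first summand is handled as above; for the capillary summand I would use the elementary identities expressing $\partial_{xx}\sqrt{\rho_\epsilon}$ in terms of $(\sqrt{\rho_\epsilon})^{1-\alpha}\partial_{xx}\rho_\epsilon^{\frac{\alpha}{2}}$ and $|\partial_x\sqrt{\rho_\epsilon}|^2$, and $|\partial_x\rho_\epsilon^{1/4}|^2$ in terms of $(\sqrt{\rho_\epsilon})^{1-\alpha}|\partial_x\rho_\epsilon^{\frac{\alpha}{4}}|^2$, so as to rewrite it as a linear combination of $\partial_{xx}\rho_\epsilon^{\frac{\alpha}{2}}$ and $|\partial_x\rho_\epsilon^{\frac{\alpha}{4}}|^2$ with coefficients that are powers of $\sqrt{\rho_\epsilon}$ of nonnegative exponent — this is where $\tfrac12<\alpha\le1$ enters — hence bounded by a power of $\|\sqrt{\rho_\epsilon}\|_{L^\infty_{t,x}}$. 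Since $\partial_{xx}\rho_\epsilon^{\frac{\alpha}{2}}$, $\partial_x\rho_\epsilon^{\frac{\alpha}{4}}$ and $\sqrt{\rho_\epsilon}$ are bounded in $L^p_\omega L^2_{t,x}$, $L^p_\omega L^4_{t,x}$ and $L^p_\omega L^\infty_{t,x}$ respectively, uniformly in $\epsilon$, by the estimates collected in \eqref{eps indep reg}, Hölder's inequality in $\omega$ gives $\mathbb{E}\,\|M_\epsilon\|_{L^2_{t,x}}^{2}\le C$. Putting everything together we obtain $\mathbb{E}\,\|\mu_\epsilon^{d,\delta}\|_{\mathcal{M}_b}\le C\delta\le C$ uniformly in $\epsilon$ and $\delta$, which by the first step concludes the proof.
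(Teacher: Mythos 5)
Your proof is correct and follows essentially the same route as the paper: a uniform bound on the expected total variation $\mathbb{E}\,|\mu_\epsilon^{d,\delta}|\lesssim \delta\,\|M_\epsilon\|_{L^2_{\omega,t,x}}\|\rho_\epsilon^{\alpha/2}\partial_x u_\epsilon\|_{L^2_{\omega,t,x}}$, weak-$\star$ compactness of bounded balls in $\mathcal{M}_b((0,T)\times\mathbb{T})$ via Riesz/Banach--Alaoglu, and Markov's inequality. The only difference is that you spell out why $\|M_\epsilon\|_{L^2_{\omega,t,x}}$ is bounded uniformly in $\epsilon$ (rewriting the capillary part as a combination of $\partial_{xx}\rho_\epsilon^{\alpha/2}$ and $|\partial_x\rho_\epsilon^{\alpha/4}|^2$ weighted by nonnegative powers of $\sqrt{\rho_\epsilon}$, using $\alpha\le 1$), a step the paper takes for granted from its uniform bounds; your justification is sound.
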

\begin{proof}
We observe that the total variation 
\begin{equation}
\sup_\epsilon \big | \mu_\epsilon^{d,\delta} \big |\le \sup_\epsilon \int_{0}^{T} \int_{\mathbb{T}}  \rho_\epsilon^{\frac{\alpha}{2}} | M_\epsilon| \beta^{''}_\delta (u_\epsilon)| \partial_x u_\epsilon | dxdt \le C.
\end{equation}
is uniformly bounded by a deterministic constant $C=C(\delta)>0.$ 
Since by means of Riesz Theorem $ C((0,T) \times \mathbb{T})^{\star}= \mathcal{M}_b ((0,T) \times \mathbb{T}),$ the set $B_L= \{ \mu^{d,\delta} \in \mathcal{M}_b((0,T) \times \mathbb{T}); \big | \mu^{d,\delta} \big | \le L \}$ is compact with respect to the weak-star topology on $\mathcal{M}_b((0,T) \times \mathbb{T}).$ Moreover we observe that
\begin{equation}
\mathbb{E} \bigg| \int_{0}^{T} \int_{\mathbb{T}} \rho_\epsilon^{\frac{\alpha}{2}} | M_\epsilon| \beta^{''}_\delta (u_\epsilon)| \partial_x u_\epsilon | dxdt \bigg| \le C\delta  \| M_\epsilon \|_{ L^2_{\omega,t,x}} \| \rho^{\frac{\alpha}{2}}_\epsilon \partial_x u_\epsilon \|_{ L^2_{\omega,t,x}} \le C \delta
\end{equation}
and we deduce 
\begin{equation}\label{law meas1}
\mathcal{L}[\mu^{d,\delta}_\epsilon](B_L^c)=\mathbb{P}( \big | \mu^{d,\delta}_\epsilon \big | \ge L) \le \frac{1}{L} \mathbb{E} \big | \mu^{d,\delta}_\epsilon \big | \le \frac{C \delta}{L}.
\end{equation}
Hence by choosing $L$ sufficiently large we get the tightness.
\end{proof}
\begin{proposition}
The set $\{\mathcal{L}[\mu^{s,\delta}], \; \delta \in (0,1) \}$ is tight on $\mathcal{X}_{\mu^{d,\delta}}.$
\end{proposition}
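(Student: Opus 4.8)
The plan is to follow the scheme of Proposition~\ref{tight meas1} line by line, the only structural change being that the smallness of the expected total variation is now extracted from the energy estimate \eqref{epsilon energy visk} rather than from the BD entropy. The essential feature I want to exploit is that the bound controlling the mass of $\mu^{s,\delta}$ outside a large ball carries an explicit factor $\delta$; since $\delta\in(0,1)$, this factor is in particular bounded by $1$, so the resulting estimate is uniform across the whole range of the truncation parameter. This uniformity in $\delta$ is exactly what is needed to conclude tightness of the $\delta$-indexed family $\{\mathcal{L}[\mu^{s,\delta}];\,\delta\in(0,1)\}$, as opposed to tightness of a single $\epsilon$-slice.

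First I would bound the total variation of $\mu^{s,\delta}$. Combining the truncation estimate $|\beta''_\delta|\le C\delta$ from Lemma~\ref{lemma beta} with the pointwise control $\sum_{k\in\mathbb{N}}\rho_\epsilon|F_k|^2\lesssim \rho_\epsilon+\rho_\epsilon|u_\epsilon|^2$ (which follows from the growth assumption \eqref{f2} and $\sum_k\alpha_k<\infty$, and was already used to derive \eqref{epsilon energy visk}), I obtain
\[
\big|\mu^{s,\delta}\big|\le \int_0^T\!\!\int_{\mathbb{T}}\sum_{k\in\mathbb{N}}\tfrac12\,\rho_\epsilon\,|\beta''_\delta(u_\epsilon)|\,|F_k|^2\,dx\,dt\lesssim \delta\int_0^T\!\!\int_{\mathbb{T}}\big(\rho_\epsilon+\rho_\epsilon|u_\epsilon|^2\big)\,dx\,dt.
\]
Taking expectations and using conservation of mass together with the energy estimate \eqref{epsilon energy visk}, the integral on the right is bounded by a deterministic constant independent of both $\epsilon$ and $\delta$, so that $\mathbb{E}\big|\mu^{s,\delta}\big|\le C\delta$. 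I would emphasise that this is the decisive gain compared with the $\epsilon$-indexed statement: the right-hand side is not merely finite but vanishes as $\delta\to0$, and in any case is $\le C$ uniformly over $\delta\in(0,1)$.

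I would then close the argument exactly as in Proposition~\ref{tight meas1}. By the Riesz representation $C((0,T)\times\mathbb{T})^\star=\mathcal{M}_b((0,T)\times\mathbb{T})$, the Banach--Alaoglu--Bourbaki theorem guarantees that each ball $B_L=\{\mu\in\mathcal{M}_b((0,T)\times\mathbb{T});\ |\mu|\le L\}$ is weak-star compact in $\mathcal{X}_{\mu^{s,\delta}}$, and Markov's inequality yields
\[
\mathcal{L}[\mu^{s,\delta}](B_L^c)=\mathbb{P}\big(\big|\mu^{s,\delta}\big|\ge L\big)\le\frac1L\,\mathbb{E}\big|\mu^{s,\delta}\big|\le\frac{C\delta}{L}\le\frac{C}{L},
\]
uniformly in $\delta\in(0,1)$; letting $L\to\infty$ makes the complementary mass uniformly small and gives the claimed tightness. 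The only point deserving care, and the one that genuinely separates this $\delta$-uniform statement from its $\epsilon$-indexed counterpart, is to check that every constant feeding into the total variation estimate (the mass bound, the energy bound \eqref{epsilon energy visk}, and the summability of the forcing coefficients) is independent of $\delta$, so that all the $\delta$-dependence is concentrated in the favourable prefactor coming from $\beta''_\delta$; once this is verified the $\frac{C}{L}$ control is genuinely $\delta$-uniform and no further obstacle arises.
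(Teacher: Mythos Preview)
Your proof is correct and follows essentially the same approach as the paper: bound the expected total variation by $C\delta$ via the truncation estimate $|\beta''_\delta|\le C\delta$ together with the energy control on $\rho_\epsilon+\rho_\epsilon|u_\epsilon|^2$, then invoke Banach--Alaoglu for weak-$\star$ compactness of balls in $\mathcal{M}_b$ and conclude by Markov's inequality. One framing remark: your emphasis on $\delta$-uniformity as ``the decisive gain'' over Proposition~\ref{tight meas1} is misplaced, since that proposition already produces the identical $C\delta/L$ bound \eqref{law meas1}, and the index $\delta\in(0,1)$ in both statements is evidently a typo for $\epsilon\in(0,1)$ (the subsequent corollary on joint tightness is indexed by $\epsilon$, and the whole purpose of this section is the $\epsilon\to0$ limit); fortunately your estimate is uniform in both parameters, so it covers the intended claim as well.
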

\begin{proof}
Similarly to the proof of Proposition \ref{tight meas1} we have that 
\begin{equation}
\sup_\epsilon \big | \mu^{s,\delta}_\epsilon \big |\le \sup_\epsilon \int_{0}^{T} \int_{\mathbb{T}} \sum_{k \in \mathbb{N}} \dfrac{1}{2} \rho_\epsilon \beta_\delta^{''}(u_\epsilon) | F_k|^2 dxdt \le C \| \beta^{''}_\delta \|_{\infty} \sum_{k \in \mathbb{N}} \alpha^2_k\le C \delta,
\end{equation}
and also 
\begin{equation}
\mathbb{E} \bigg | \int_{0}^{T} \int_{\mathbb{T}} \sum_{k \in \mathbb{N}} \dfrac{1}{2} \rho_\epsilon \beta_\delta^{''}(u_\epsilon) | F_k|^2 dxdt \bigg| \le C\delta
\end{equation}
hence the tightness follows by using Riesz Theorem and the same lines of argument of Proposition \ref{tight meas1}.
\end{proof}
\begin{corollary}
The set $ \{ \mathcal{L}[ \rho_\epsilon, m_\epsilon, \zeta_\epsilon,\Lambda_\epsilon, W,{\mu_\epsilon^{d,\delta}},{\mu_\epsilon^{s,\delta}}]; \epsilon \in (0,1) \}$ is tight on $\mathcal{X}.$
\end{corollary}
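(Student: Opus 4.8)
The plan is to deduce the joint tightness from the tightness of each marginal, which has just been established in the seven preceding Propositions. The key elementary fact is that for a \emph{finite} product $\mathcal{X}=\prod_{i=1}^{7}\mathcal{X}_i$ of (sub-Polish) topological spaces, if $K_i\subset\mathcal{X}_i$ is compact for each $i$, then $K=\prod_{i=1}^{7}K_i$ is compact in $\mathcal{X}$ by Tychonoff's theorem; moreover the coordinate projections are continuous, so a Borel probability measure $\mu$ on $\mathcal{X}$ with marginals $\mu^{(i)}$ satisfies
\begin{equation*}
\mu\big(K^c\big)\le \sum_{i=1}^{7}\mu^{(i)}\big(K_i^c\big).
\end{equation*}

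Concretely, fix $\eta>0$. By Proposition~4.4 (tightness of $\mathcal{L}[\rho_\epsilon]$ on $\mathcal{X}_\rho$), Proposition~4.5 ($\mathcal{L}[m_\epsilon]$ on $\mathcal{X}_m$), Proposition~\ref{tight lambda} ($\mathcal{L}[\zeta_\epsilon]$ on $\mathcal{X}_\zeta$), Proposition~\ref{tight zeta} ($\mathcal{L}[\Lambda_\epsilon]$ on $\mathcal{X}_\Lambda$), Proposition~\ref{tight W} ($\mathcal{L}[W]$ on $\mathcal{X}_W$), Proposition~\ref{tight meas1} ($\mathcal{L}[\mu_\epsilon^{d,\delta}]$ on $\mathcal{X}_{\mu^{d,\delta}}$) and the last Proposition ($\mathcal{L}[\mu_\epsilon^{s,\delta}]$ on $\mathcal{X}_{\mu^{s,\delta}}$), there exist compact sets $K_\rho,K_m,K_\zeta,K_\Lambda,K_W,K_{\mu^{d,\delta}},K_{\mu^{s,\delta}}$ in the respective factor spaces such that each has complement of measure at most $\eta/7$ under the corresponding marginal law, uniformly in $\epsilon\in(0,1)$. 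Setting
\begin{equation*}
K:=K_\rho\times K_m\times K_\zeta\times K_\Lambda\times K_W\times K_{\mu^{d,\delta}}\times K_{\mu^{s,\delta}}\subset\mathcal{X},
\end{equation*}
the set $K$ is compact in $\mathcal{X}$, and the displayed inequality applied to $\mu=\mathcal{L}[\rho_\epsilon,m_\epsilon,\zeta_\epsilon,\Lambda_\epsilon,W,\mu_\epsilon^{d,\delta},\mu_\epsilon^{s,\delta}]$ (whose $i$-th marginal is exactly the $i$-th law above) gives $\mathcal{L}[\rho_\epsilon,m_\epsilon,\zeta_\epsilon,\Lambda_\epsilon,W,\mu_\epsilon^{d,\delta},\mu_\epsilon^{s,\delta}]\big(K^c\big)\le\eta$ for all $\epsilon\in(0,1)$. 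Since $\eta>0$ was arbitrary, the family $\{\mathcal{L}[\rho_\epsilon,m_\epsilon,\zeta_\epsilon,\Lambda_\epsilon,W,\mu_\epsilon^{d,\delta},\mu_\epsilon^{s,\delta}];\ \epsilon\in(0,1)\}$ is tight on $\mathcal{X}$.

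There is no real obstacle here: the statement is a bookkeeping corollary of the marginal tightness results, the only point worth a remark being that $\mathcal{X}$, as a finite product of sub-Polish spaces, is again sub-Polish, so that the Jakubowski--Skorokhod representation theorem (Theorem~\ref{Jak}) will indeed be applicable to the tight family just obtained in the subsequent step of the argument.
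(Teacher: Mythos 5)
Your proposal is correct and follows essentially the same route as the paper: both take compact sets from the seven marginal tightness results with complements of mass at most $\theta/7$, invoke Tychonoff's theorem for compactness of the product, and conclude by a union bound on the complement. The only cosmetic difference is that you phrase the union bound via marginal laws and projections while the paper writes it out directly in terms of $\mathbb{P}$.
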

\begin{proof}
Let $\theta \in (0,1),$ then by virtue of the tightness of the marginal laws we infer the existence of compact sets $K_\rho \subset \mathcal{X}_\rho, \; K_m \subset \mathcal{X}_m, \; K_\zeta \subset \mathcal{X}_\zeta, \; K_\Lambda \subset \mathcal{X}_\Lambda, \; K_W \subset \mathcal{X}_W, \; K_{\mu^{d,\delta}} \subset \mathcal{X}_{\mu^{d,\delta}}, \; K_{\mu^{s,\delta}}\subset \mathcal{X}_{\mu^{s,\delta}}$ such that $$\mathcal{L}[\rho_\epsilon](K_\rho)\ge 1-\frac{\theta}{7}, \quad \mathcal{L}[m_\epsilon](K_m)\ge 1-\frac{\theta}{7}, \quad \mathcal{L}[\zeta_\epsilon](K_\zeta)\ge 1-\frac{\theta}{7}, $$ $$ \mathcal{L}[\Lambda_\epsilon](K_\Lambda)\ge 1-\frac{\theta}{7}, \quad \mathcal{L}[W_\epsilon](K_W)\ge 1-\frac{\theta}{7}.$$
$$ \mathcal{L}[\mu^{d,\delta}_\epsilon](K_{\mu^{d,\delta}})\ge 1-\frac{\theta}{7}, \quad \mathcal{L}[\mu^{s,\delta}_\epsilon](K_{\mu^{s,\delta}})\ge 1-\frac{\theta}{7},$$
Hence by Tychonoff's Theorem $K_\rho \times K_m \times K_\zeta \times K_\Lambda \times K_W \times K_{\mu^{d,\delta}} \times K_{\mu^{s,\delta}}$ is compact in $\mathcal{X}$ and 
\begin{equation*}
\begin{split}
&\mathcal{L}[\rho_\epsilon, m_\epsilon, \zeta_\epsilon, \Lambda_\epsilon; W,{\mu_\epsilon^{d,\delta}},{\mu_\epsilon^{s,\delta}}](K_\rho \times K_m \times K_\zeta \times K_\Lambda \times K_W \times K_W \times K_{\mu^{d,\delta}} \times K_{\mu^{s,\delta}}) \\ & =\mathbb{P}(\rho_\epsilon \in K_\rho,  m_\epsilon \in K_m,  \zeta_\epsilon \in K_\zeta, \Lambda_\epsilon \in K_\Lambda,  W \in K_W,  \mu^{d,\delta}_\epsilon \in K_{\mu^{d,\delta}},\mu^{s,\delta}_\epsilon \in K_{\mu^{s,\delta}}) \\ & = 1- \mathbb{P}([ \rho_\epsilon \notin K_\rho] \cup [m_\epsilon \notin K_m] \cup [\zeta_\epsilon \notin K_\zeta] \cup [\Lambda_\epsilon \notin K_\Lambda] \cup [W \notin K_W]  \\ & \cup [\mu^{d,\delta}_\epsilon \notin K_{\mu^{d,\delta}}]\cup [\mu^{s,\delta}_\epsilon \notin K_{\mu^{s,\delta}}]) \\ & \ge 1- \mathbb{P}( \rho_\epsilon \notin K_\rho)-\mathbb{P} (m_\epsilon \notin K_m) -\mathbb{P}(\zeta_\epsilon \notin K_\zeta)-\mathbb{P}( \Lambda_\epsilon \notin K_\Lambda)-\mathbb{P}(W \notin K_W) - \\ & \mathbb{P}(\mu^{d,\delta}_\epsilon \notin K_{\mu^{d,\delta}} )-\mathbb{P} (\mu^{s,\delta}_\epsilon \notin K_{\mu^{s,\delta}})\\ & \ge 1-\theta.
\end{split}
\end{equation*}
\end{proof}
\noindent
As discussed in \cite{Feir}, since weak topologies are in general not metrizzable then the path space $\mathcal{X}$ is not a Polish space.  However it is a sub-Polish space due to the continuous embedding of $\mathcal{X}$ in $\mathcal{D}$'.  Hence we can make use of the Jakuboiwski-Skorokhod representation Theorem,  Theorem \ref{Jak} in order to infer the following convergence result
\begin{proposition}\label{conv stoch jak}
There exists a complete probability space  $(\tilde{\Omega}, \tilde{\mathfrak{F}},\tilde{\mathbb{P})}, $ with $\mathcal{X}$- valued Borel measurable random variables $[\tilde{\rho}_\epsilon,\tilde{m}_\epsilon, \tilde{\zeta}_\epsilon,\tilde{\Lambda}_\epsilon,\tilde{W}_\epsilon,{\mu_\epsilon^{d,\delta}},{\mu_\epsilon^{s,\delta}}]$ and $[\tilde{\rho},\tilde{m}, \tilde{\zeta},\tilde{\Lambda},\tilde{W},{\mu^{d,\delta}},{\mu^{s,\delta}}]$ such that (up to a subsequence):
\begin{itemize}
\item[(1)] For all $\epsilon \in (0,1), $ the laws of $[\tilde{\rho}_\epsilon,\tilde{m}_\epsilon, \tilde{\zeta}_\epsilon,\tilde{\Lambda}_\epsilon,\tilde{W}_\epsilon,{\mu_\epsilon^{d,\delta}},{\mu_\epsilon^{s,\delta}}]$ and $[\tilde{\rho},\tilde{m}, \tilde{\zeta},\tilde{\Lambda},\tilde{W},{\mu^{d,\delta}},{\mu^{s,\delta}}]$ coincide on $\mathcal{X}$
\item[(2)] The law of $[\tilde{\rho},\tilde{m}, \tilde{\zeta},\tilde{\Lambda},\tilde{W},{\mu^{d,\delta}},{\mu^{s,\delta}}]$ on $\mathcal{X}$ is a Radon measure
\item[(3)] $[\tilde{\rho}_\epsilon,\tilde{m}_\epsilon, \tilde{\zeta}_\epsilon,\tilde{\Lambda}_\epsilon,\tilde{W}_\epsilon,{\mu_\epsilon^{d,\delta}},{\mu_\epsilon^{s,\delta}}]$ converges in the topology of $\mathcal{X}, \; \mathbb{P}-a.s.$ to $[\tilde{\rho},\tilde{m}, \tilde{\zeta},\tilde{\Lambda},\tilde{W},{\mu^{d,\delta}},{\mu^{s,\delta}}],$ i.e.,
$$\tilde{\rho}_\epsilon \rightarrow \tilde{\rho} \in L^2(0,T;H^1(\mathbb{T})),$$
$$\tilde{m}_\epsilon \rightarrow \tilde{m} \in L^p(0,T;L^p(\mathbb{T})),$$
$$\tilde{\zeta}_\epsilon \rightharpoonup \tilde{\zeta} \in L^2(0,T;L^2(\mathbb{T})),$$
$$\tilde{\Lambda}_\epsilon \rightharpoonup \tilde{\Lambda} \in L^2(0,T;L^2(\mathbb{T})),$$
$$\tilde{W}_\epsilon \rightarrow \tilde{W} \in C(0,T;\mathfrak{U}_0),$$
$$\tilde{\mu}_\epsilon^{d,\delta} \overset{\star}{\rightharpoonup} \tilde{\mu}^{d,\delta} \in \mathcal{M}_b((0,T) \times \mathbb{T}),$$
$$\tilde{\mu}_\epsilon^{s,\delta} \overset{\star}{\rightharpoonup} \tilde{\mu}^{d,\delta} \in \mathcal{M}_b((0,T) \times \mathbb{T}),$$
$\tilde{\mathbb{P}}-a.s$ for all $p \in [1,\infty).$
\\
\\
Moreover $\tilde{\Lambda}$ is such that $\sqrt{\tilde{\rho}} \tilde{\Lambda}= \tilde{m}$ and the following additional convergence results hold
\item[(4)] $\partial_{xx} \tilde{\rho}_\epsilon^{\frac{\alpha}{2}} \rightharpoonup \partial_{xx} \tilde{\rho}^{\frac{\alpha}{2}} \in L^2(0,T;L^2(\mathbb{T})),\quad \tilde{\mathbb{P}}-a.s$ for all $p \in [1,\infty),$
\item[(5)] $\tilde{\rho}_\epsilon \rightarrow \tilde{\rho} \in L^p(0,T;L^p(\mathbb{T})),\quad \tilde{\mathbb{P}}-a.s$ for all $p \in [1,\infty),$
\item[(6)] $\partial_x \tilde{\rho}_\epsilon^{\frac{\alpha}{2}} \rightarrow \partial_x \tilde{\rho}^{\frac{\alpha}{2}} \in L^2(0,T;L^2(\mathbb{T})),\quad \tilde{\mathbb{P}}-a.s$ for all $p \in [1,\infty).$
\end{itemize}
\end{proposition}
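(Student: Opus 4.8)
The plan is to feed the tightness of the joint laws on $\mathcal{X}$, established in the preceding Corollary, into the Jakubowski--Skorokhod representation theorem and then to upgrade the resulting almost-sure convergences by transporting the $\epsilon$-uniform a priori bounds collected above to the new probability space.

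First I would observe that $\mathcal{X}$, being a finite product of the spaces $\mathcal{X}_\rho,\dots,\mathcal{X}_{\mu^{s,\delta}}$, embeds continuously into $\mathcal{D}'((0,T)\times\mathbb{T})$ and is therefore sub-Polish, so Theorem \ref{Jak} applies to the tight family $\{\mathcal{L}[\rho_\epsilon, m_\epsilon, \zeta_\epsilon, \Lambda_\epsilon, W, \mu_\epsilon^{d,\delta}, \mu_\epsilon^{s,\delta}];\ \epsilon\in(0,1)\}$. This yields, along a subsequence, a complete probability space — which may be taken to be $([0,1],\overline{\mathcal{B}([0,1])},\mathcal{L})$ — together with $\mathcal{X}$-valued random variables $[\tilde\rho_\epsilon,\tilde m_\epsilon,\tilde\zeta_\epsilon,\tilde\Lambda_\epsilon,\tilde W_\epsilon,\tilde\mu_\epsilon^{d,\delta},\tilde\mu_\epsilon^{s,\delta}]$ and $[\tilde\rho,\tilde m,\tilde\zeta,\tilde\Lambda,\tilde W,\tilde\mu^{d,\delta},\tilde\mu^{s,\delta}]$ having the same laws as the originals, with the limiting law Radon, and with $\tilde{\mathbb{P}}$-a.s.\ convergence of the former to the latter in the topology of $\mathcal{X}$. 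Items (1)--(3) then follow simply by reading off the topology of each factor, the weak (respectively weak-$\star$) convergences being those built into $\mathcal{X}_\zeta,\mathcal{X}_\Lambda,\mathcal{X}_{\mu^{d,\delta}},\mathcal{X}_{\mu^{s,\delta}}$.

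Next I would transfer the a priori bounds: since the norms entering \eqref{eps indep reg}--\eqref{eps indep reg 2} define Borel-measurable (indeed lower semicontinuous) functionals on the relevant factors of $\mathcal{X}$, and equality of laws is preserved under such functionals, every $\epsilon$-uniform estimate collected above holds verbatim for the tilded variables. From the uniform bound on $\partial_{xx}\tilde\rho_\epsilon^{\alpha/2}$ in $L^p_\omega L^2_t L^2_x$ I would extract, along a further subsequence, weak convergence in $L^2(0,T;L^2(\mathbb{T}))$ holding $\tilde{\mathbb{P}}$-a.s., and identify the limit as $\partial_{xx}\tilde\rho^{\alpha/2}$ using that $\tilde\rho_\epsilon\to\tilde\rho$ strongly in $L^2(0,T;H^1(\mathbb{T}))$, hence $\tilde\rho_\epsilon^{\alpha/2}\to\tilde\rho^{\alpha/2}$ in $L^2_{t,x}$ (Hölder continuity of $z\mapsto z^{\alpha/2}$, $\alpha/2\in(1/4,1/2]$); this gives (4). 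The uniform $L^\infty_{t,x}$ bound on $\sqrt{\tilde\rho_\epsilon}$ combined with $\tilde\rho_\epsilon\to\tilde\rho$ in $L^2_{t,x}$ and Vitali's theorem upgrades the convergence of $\tilde\rho_\epsilon$ to $L^p(0,T;L^p(\mathbb{T}))$ for every finite $p$, which is (5). For (6) I would reproduce \eqref{strong rho H^1}: integrating by parts,
\begin{equation*}
\int_0^T\!\!\int_{\mathbb{T}}\big|\partial_x\tilde\rho_\epsilon^{\alpha/2}-\partial_x\tilde\rho^{\alpha/2}\big|^2\,dx\,dt \le \big\|\tilde\rho_\epsilon^{\alpha/2}-\tilde\rho^{\alpha/2}\big\|_{L^2_{t,x}}\Big(\big\|\partial_{xx}\tilde\rho_\epsilon^{\alpha/2}\big\|_{L^2_{t,x}}+\big\|\partial_{xx}\tilde\rho^{\alpha/2}\big\|_{L^2_{t,x}}\Big),
\end{equation*}
whose right-hand side vanishes as $\epsilon\to0$ by (5) and the transferred bound. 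Finally, since $\sqrt{\rho_\epsilon}\Lambda_\epsilon=\rho_\epsilon u_\epsilon=m_\epsilon$ pointwise on the original space and this relation is preserved in law, one has $\sqrt{\tilde\rho_\epsilon}\tilde\Lambda_\epsilon=\tilde m_\epsilon$; passing to the limit with $\sqrt{\tilde\rho_\epsilon}\to\sqrt{\tilde\rho}$ strongly, $\tilde\Lambda_\epsilon\rightharpoonup\tilde\Lambda$ weakly in $L^2_{t,x}$ and $\tilde m_\epsilon\to\tilde m$ strongly yields $\sqrt{\tilde\rho}\,\tilde\Lambda=\tilde m$.

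The step I expect to be most delicate is the bookkeeping underpinning the transfer of the $\epsilon$-uniform bounds to $(\tilde\Omega,\tilde{\mathfrak{F}},\tilde{\mathbb{P}})$ — i.e.\ verifying the required measurability and lower semicontinuity of the relevant norm functionals on the weak and measure-valued factors of $\mathcal{X}$ — together with the strong--weak passage to the limit in the nonlinear quantities $\sqrt{\tilde\rho_\epsilon}\tilde\Lambda_\epsilon$ and $\tilde\rho_\epsilon^{\alpha/2}$; by contrast the invocation of Theorem \ref{Jak} and the Aubin--Lions/interpolation arguments are routine once these points are settled.
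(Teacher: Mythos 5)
Your proposal is correct and follows essentially the same route as the paper: the paper deduces the proposition directly from the tightness corollary via the Jakubowski--Skorokhod theorem on the sub-Polish space $\mathcal{X}$, and obtains the supplementary items (4)--(6) and the identity $\sqrt{\tilde{\rho}}\,\tilde{\Lambda}=\tilde{m}$ just as you do, by transferring the $\epsilon$-uniform bounds to the new probability space and combining the interpolation estimate \eqref{strong rho H^1} with Vitali's theorem. The only difference is presentational: the paper leaves most of these verifications implicit (and carries out the identification $\sqrt{\tilde{\rho}}\,\tilde{\Lambda}=\tilde{m}$ later, in the proof of Lemma \ref{def u}, via Fatou's lemma rather than your strong--weak product argument), whereas you spell them out.
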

\noindent
Concerning the stochastic setting,  we observe that since the limit $\tilde{\zeta}$ and $\tilde{\Lambda}$ are not stochastic processes in the classical sense,  it is convenient to work with random distributions.  We recall that for random distributions which are adapted and satisfy a suitable integrability assumption there exists a progressively measurable stochastic process belonging to the same class of equivalence,  see Lemma 2.2.18 in \cite{Feir} for a detailed discussion.  Hence we infer that the minimal regularity assumption on the integrands for which the stochastic integral is well-defined is the non anticipativity of the corresponding joint canonical filtration with respect to the Wiener process.  To this purpose we define 
\begin{equation}
\tilde{\mathfrak{F}}_t:= \sigma \bigg( \sigma_t [\tilde{\rho}] \cup \sigma_t [\tilde{\zeta}]\cup \sigma_t [\tilde{\Lambda}] \cup \bigcup_{k=1}^{\infty} \sigma_t [\tilde{W}_k] \bigg), \quad t \in [0,T]
\end{equation}
and we claim that $\tilde{\mathfrak{F}}_t$ is independent of $\sigma(\tilde{W}(s)-\tilde{W}(t))$ for all $s>t.$ \\ The process $\tilde{W}_\epsilon= \sum_{k=1}^{\infty} e_k \tilde{W}_{\epsilon,k}$ is a cylindrical Wiener process with respect to 
\begin{equation}
\sigma \bigg( \sigma_t [\tilde{\rho}_\epsilon] \cup \sigma_t [\tilde{\zeta}_\epsilon]\cup \sigma_t [\tilde{\Lambda}_\epsilon]\cup \bigcup_{k=1}^{\infty} \sigma_t [\tilde{W}_{\epsilon,k}] \bigg), \quad t \in [0,T]
\end{equation}
and this filtration is non-anticipative with respect to $\tilde{W}_\epsilon.$ This allow us to pass to the limit as $\epsilon \rightarrow 0$ in order to get the non-anticipativity of $(\tilde{\mathfrak{F}}_t)_{t \ge 0}$ with respect to $\tilde{W.}$ In order to conclude our claim,  we observe that by using Lemma 2.1.35 and Corollary 2.1.36 in \cite{Feir},  $\tilde{W}$ is a cylindrical Wiener process with respect to $(\tilde{\mathfrak{F}}_t)_{t \ge 0}.$
\\
\\
We conclude this part by proving the following Lemma 
\begin{lemma}\label{def u}
Let $f \in C \cap L^\infty(\mathbb{R}), \; p \in [1,\infty).$ Let $(\tilde{\rho}_\epsilon, \tilde{u}_\epsilon)$ be a strong pathwise solutions of $\eqref{eps stoc quantum}-\eqref{epsilon C.I}.$ Define 
$$\tilde{u}(x,t,\omega)= \begin{cases}
\dfrac{\tilde{m}(x,t,\omega)}{\tilde{\rho}(x,t,\omega)}=\dfrac{\tilde{\Lambda}(x,t,\omega)}{\sqrt{\tilde{\rho}(x,t,\omega)}}, \quad (x,t,\omega) \in  \{ \tilde{\rho}>0 \} \\
0, \quad \quad \quad \quad \quad \quad \quad \quad \quad \, \; \; \;\quad (x,t,\omega) \in \{ \tilde{\rho}=0 \}.
 \end{cases}$$
Then 
\begin{itemize}
\item [1)] 
\begin{equation}\label{conv rho f}
\tilde{\rho}_\epsilon f(\tilde{u}_\epsilon) \rightarrow \tilde{\rho} f(\tilde{u}) \;  \text{in} \; L^p(0,T;L^p(\mathbb{T})),\quad \text{for all} \; p\in[1,\infty), \; \tilde{\mathbb{P}}-a.s..
\end{equation}
\item [2)]
\begin{equation}\label{conv rho u f}
\tilde{\rho}_\epsilon \tilde{u}_\epsilon f(\tilde{u}_\epsilon) \rightarrow \tilde{\rho} \tilde{u} f(\tilde{u}) \;  \text{in} \; L^p(0,T;L^p(\mathbb{T})),\quad \text{for all} \; p\in[1,\infty), \; \tilde{\mathbb{P}}-a.s..
\end{equation}
\item [(3)]
\begin{equation}\label{conv der rho f}
\partial_x \tilde{\rho}_\epsilon^{\frac{\alpha}{2}} f(\tilde{u}_\epsilon) \rightarrow \partial_x \tilde{\rho}^{\frac{\alpha}{2}} f(\tilde{u})\;  \text{in} \; L^p(0,T;L^p(\mathbb{T})),\quad \text{for all} \; p\in[1,\infty), \; \tilde{\mathbb{P}}-a.s..
\end{equation}
\end{itemize}
\end{lemma}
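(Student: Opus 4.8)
The plan is to establish a.e.\ convergence of the three products first and then to upgrade it to strong convergence in $L^p(0,T;L^p(\mathbb{T}))$ by equi-integrability. First I would observe that, since the approximating density carries no vacuum (Theorem \ref{global well posedness epsilon}), $\tilde u_\epsilon=\tilde m_\epsilon/\tilde\rho_\epsilon=\tilde\Lambda_\epsilon/\sqrt{\tilde\rho_\epsilon}$ is well defined $\tilde{\mathbb{P}}$-a.s.\ and has, jointly with $\tilde\rho_\epsilon$, the same law as $(u_\epsilon,\rho_\epsilon)$; then, passing to a further subsequence in Proposition \ref{conv stoch jak}, I may assume that $\tilde\rho_\epsilon\to\tilde\rho$, $\tilde m_\epsilon\to\tilde m$ and $\partial_x\tilde\rho_\epsilon^{\alpha/2}\to\partial_x\tilde\rho^{\alpha/2}$ hold a.e.\ on $(0,T)\times\mathbb{T}$, $\tilde{\mathbb{P}}$-a.s.

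For the pointwise step I would split $(0,T)\times\mathbb{T}$ into $\{\tilde\rho>0\}$ and $\{\tilde\rho=0\}$. On $\{\tilde\rho>0\}$, since $\tilde\rho_\epsilon\to\tilde\rho>0$ a.e., one gets $\tilde u_\epsilon=\tilde m_\epsilon/\tilde\rho_\epsilon\to\tilde m/\tilde\rho=\tilde u$ a.e., hence $f(\tilde u_\epsilon)\to f(\tilde u)$ a.e.\ by continuity of $f$; multiplying by the a.e.\ convergent sequences $\tilde\rho_\epsilon$, $\tilde m_\epsilon=\tilde\rho_\epsilon\tilde u_\epsilon$, $\partial_x\tilde\rho_\epsilon^{\alpha/2}$ yields a.e.\ convergence of the three products there to the right limits. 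On $\{\tilde\rho=0\}$ I would use that $\tilde m=\sqrt{\tilde\rho}\,\tilde\Lambda=0$ a.e.\ and that $\partial_x\tilde\rho^{\alpha/2}=0$ a.e.\ on the level set $\{\tilde\rho^{\alpha/2}=0\}$ (the weak $x$-derivative of an $H^1_x$ function vanishes a.e.\ on its level sets, and $\tilde\rho^{\alpha/2}\in L^2_tH^1_x$), so that the three target limits all vanish there with the convention $\tilde u=0$; on the other hand, since $f$ is bounded, $\tilde\rho_\epsilon f(\tilde u_\epsilon)$, $\tilde m_\epsilon f(\tilde u_\epsilon)=\tilde\rho_\epsilon\tilde u_\epsilon f(\tilde u_\epsilon)$ and $\partial_x\tilde\rho_\epsilon^{\alpha/2}f(\tilde u_\epsilon)$ are bounded by $\|f\|_{L^\infty}$ times $\tilde\rho_\epsilon$, $|\tilde m_\epsilon|$, $|\partial_x\tilde\rho_\epsilon^{\alpha/2}|$, which tend to $0$ a.e.\ on $\{\tilde\rho=0\}$. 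Combining the two regions gives a.e.\ convergence of the three products on all of $(0,T)\times\mathbb{T}$, $\tilde{\mathbb{P}}$-a.s.

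To conclude I would upgrade the a.e.\ convergence to $L^p_tL^p_x$ convergence by a generalized dominated convergence (Vitali) argument based on the $\epsilon$-uniform bounds \eqref{eps indep reg}: for \eqref{conv rho f} one has $|\tilde\rho_\epsilon f(\tilde u_\epsilon)|^p\le\|f\|_{L^\infty}^p\tilde\rho_\epsilon^p$ with $\tilde\rho_\epsilon^p\to\tilde\rho^p$ in $L^1((0,T)\times\mathbb{T})$, and analogously for \eqref{conv rho u f} with dominating sequence $\|f\|_{L^\infty}^p|\tilde m_\epsilon|^p\to\|f\|_{L^\infty}^p|\tilde m|^p$ in $L^1$; for \eqref{conv der rho f} the term $\partial_x\tilde\rho_\epsilon^{\alpha/2}f(\tilde u_\epsilon)$ is dominated by $\|f\|_{L^\infty}|\partial_x\tilde\rho_\epsilon^{\alpha/2}|$, which converges in $L^2_tL^2_x$ and, writing $\partial_x\tilde\rho_\epsilon^{\alpha/2}=2\tilde\rho_\epsilon^{\alpha/4}\partial_x\tilde\rho_\epsilon^{\alpha/4}$ and using $\tilde\rho_\epsilon^{\alpha/4}\in L^\infty_{t,x}$ and $\partial_x\tilde\rho_\epsilon^{\alpha/4}\in L^4_tL^4_x$ uniformly, is bounded in $L^4_tL^4_x$, so that $|\partial_x\tilde\rho_\epsilon^{\alpha/2}f(\tilde u_\epsilon)|^p$ is equi-integrable in the relevant range of $p$. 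Since the limits are uniquely identified, the convergences then hold along the full sequence by the usual subsequence principle. The main obstacle I anticipate is precisely the vacuum region $\{\tilde\rho=0\}$, where $\tilde u_\epsilon$ need not converge and $\tilde u$ has no intrinsic meaning; the resolution is the simultaneous vanishing there of $\tilde\rho$, $\tilde m$ and $\partial_x\tilde\rho^{\alpha/2}$ together with the boundedness of $f$, and some care is needed in justifying the identities $\tilde m=0$ and $\partial_x\tilde\rho^{\alpha/2}=0$ a.e.\ on that set.
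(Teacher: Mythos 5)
Your proposal is correct and follows essentially the same route as the paper: a.e.\ convergence of $\tilde\rho_\epsilon$, $\tilde m_\epsilon$, $\partial_x\tilde\rho_\epsilon^{\alpha/2}$ extracted from Proposition \ref{conv stoch jak}, a splitting into $\{\tilde\rho>0\}$ and $\{\tilde\rho=0\}$ with the boundedness of $f$ and the vanishing of $\tilde m$ and $\partial_x\tilde\rho^{\alpha/2}$ on the vacuum set (the latter by the Sobolev level-set property, exactly as in the paper), and an upgrade to $L^p_tL^p_x$ via Vitali and the uniform bounds \eqref{eps indep reg}. The only cosmetic difference is that the paper justifies $\tilde m=0$ on $\{\tilde\rho=0\}$ by a Fatou argument on $\tilde m_\epsilon^2/\tilde\rho_\epsilon$, whereas you invoke the identity $\tilde m=\sqrt{\tilde\rho}\,\tilde\Lambda$ already recorded in Proposition \ref{conv stoch jak}; both are legitimate.
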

\begin{proof}
We observe that by virtue of Proposition \ref{conv stoch jak} the a priori estimates \eqref{eps indep reg}-\eqref{eps indep reg 2} and Vitali's Theorem we have
\begin{equation}\label{rho conv omega t x}
\tilde{\rho}_\epsilon \rightarrow \tilde{\rho} \quad \text{in} \;L^1(\tilde{\Omega}; L^2(0,T);H^1(\mathbb{T})), 
\end{equation}
\begin{equation}
\tilde{\rho}_\epsilon \tilde{u}_\epsilon \rightarrow \tilde{m} \quad \text{in} \; L^1(\tilde{\Omega};L^p(0,T);L^p(\mathbb{T})), 
\end{equation}
\begin{equation}\label{der rho conv omega t x}
\partial_x \tilde{\rho}^{\frac{\alpha}{2}}_\epsilon \rightarrow \partial_x \tilde{\rho}^{\frac{\alpha}{2}} \quad  \text{in} \; L^1(\tilde{\Omega};L^2(0,T);L^2(\mathbb{T})).
\end{equation}
Hence we deduce that up to a subsequence 
\begin{equation}\label{rho a.e.}
\tilde{\rho}_\epsilon \rightarrow \tilde{\rho} \quad a.e. \quad  \text{in} \; \mathbb{T} \times (0,T)\times\tilde{\Omega}, 
\end{equation}
\begin{equation}\label{m a.e.}
\tilde{\rho}_\epsilon \tilde{u}_\epsilon \rightarrow \tilde{m} \quad a.e. \quad  \text{in} \; \mathbb{T} \times (0,T)\times\tilde{\Omega}, 
\end{equation}
\begin{equation}\label{dx rho a.e}
\partial_x \tilde{\rho}^{\frac{\alpha}{2}}_\epsilon \rightarrow \partial_x \tilde{\rho}^{\frac{\alpha}{2}} \quad a.e. \quad  \text{in} \; \mathbb{T} \times (0,T)\times\tilde{\Omega}.
\end{equation}
Moreover, by the Fatou lemma we have 
\begin{equation}
\int_{\tilde{\Omega}}\int_{0}^{T} \int_{\mathbb{T}} \underset{\epsilon \rightarrow 0}{\liminf} \dfrac{\tilde{m}^2_\epsilon}{\tilde{\rho}_\epsilon} dxdtd\tilde{P} \le \underset{\epsilon \rightarrow 0}{\liminf}\int_{\tilde{\Omega}}\int_{0}^{T} \int_{\mathbb{T}} \dfrac{\tilde{m}^2_\epsilon}{\tilde{\rho}_\epsilon} dxdtd\tilde{P}< \infty
\end{equation}
which implies that $\tilde{m}=0$ on the set $\{\tilde{\rho}=0\}$ and also 
$$\sqrt{\tilde{\rho}}\tilde{u} \in L^2(\Omega;L^2(0,T;L^2(\mathbb{T}))), \quad \tilde{m}=\tilde{\rho}\tilde{u} =\sqrt{\tilde{\rho}} \tilde{\Lambda}.$$
In order to prove \eqref{conv rho f},  we observe that by virtue of \eqref{rho a.e.} we have $$\tilde{\rho}_\epsilon f(\tilde{u}_\epsilon) \rightarrow \tilde{\rho} f(\tilde{u}) \quad a.e.  \quad \text{in} \; \{\tilde{\rho} >0 \}.$$
On the other hand,  since $f \in L^\infty(\mathbb{R})$ we get $$| \tilde{\rho}_\epsilon f(\tilde{u}_\epsilon)| \le | \tilde{\rho}_\epsilon | \| f \|_{\infty} \rightarrow 0  \quad a.e.  \quad \text{in} \; \{ \tilde{\rho}=0 \}$$
and, since $\{ \tilde{\rho}_\epsilon \}_\epsilon \subset L^p(0,T;L^p(\mathbb{T})), \; \tilde{\mathbb{P}}-a.s.$ then by using Vitali's theorem we get our claim.
Concerning \eqref{conv rho u f} we observe that \eqref{m a.e.} implies
$$ \tilde{\rho}_\epsilon \tilde{u}_\epsilon f(\tilde{u}_\epsilon) \rightarrow \tilde{m}  f(\tilde{u}) \quad a.e. \quad \text{in} \; \{ \tilde{\rho}>0\},$$
$$|  \tilde{\rho}_\epsilon \tilde{u}_\epsilon f(\tilde{u}_\epsilon) | \le | \partial_x \tilde{\rho}_\epsilon \tilde{u}_\epsilon  | \| f \|_{\infty} \rightarrow 0 \quad a.e. \quad \text{in} \; \{ \tilde{\rho}=0\}.$$
Hence by using the uniform bounds \eqref{eps indep reg} and Vitali's theorem we get \eqref{conv rho u f}.
Finally in order to conclude the proof we observe that since $\tilde{\rho}$ is a Sobolev function,  then (see \cite{Evans G}) $$\partial_x \tilde{\rho}=0 \quad a.e. \quad \text{in} \; \{\tilde{\rho}=0\}.$$
Moreover $$ \partial_x \tilde{\rho}_\epsilon f(\tilde{u}_\epsilon) \rightarrow \partial_x \tilde{\rho} f(\tilde{u}) \quad a.e. \quad \text{in} \; \{ \tilde{\rho}>0\},$$
$$| \partial_x \tilde{\rho}_\epsilon f(\tilde{u}_\epsilon) | \le | \partial_x \tilde{\rho}_\epsilon | \| f \|_{\infty} \rightarrow 0 \quad a.e. \quad \text{in} \; \{ \tilde{\rho}=0\}.$$
hence $$\partial_x \tilde{\rho}_\epsilon f(\tilde{u}_\epsilon) \rightarrow \partial_x \tilde{\rho} f(\tilde{u}) \quad a.e. \quad \text{in} \; \mathbb{T} \times (0,T)\times \tilde{\Omega}.$$ Again by virtue of the uniform bounds \eqref{eps indep reg} and Vitali's theorem we get our result.
\end{proof}
\noindent
\subsection{Proof of Theorem \ref{main theorem weak}}
Our next goal is to identify the limit $(\tilde{\rho},\tilde{\zeta},\tilde{\Lambda})$ as a weak dissipative martingale solution to \eqref{stoc quantum} in the sense of Definition \ref{def weak dissip sol}. We divide the proof in two steps.
\\
\\
\textbf{Step 1: The limit $\epsilon \rightarrow 0$}
\\
We pass to the limit in the weak formulation of \eqref{truncated momentum}. 
To this purpose let $(\tilde{\rho}_\epsilon, \tilde{u}_\epsilon)$ be a strong pathwise solution of \eqref{eps stoc quantum}-\eqref{epsilon C.I}, then $(\tilde{\rho}_\epsilon, \tilde{u}_\epsilon) \rightarrow (\tilde{\rho},\tilde{m},\tilde{\zeta},\tilde{\Lambda})$ in the regularity class of Proposition \ref{conv stoch jak}. 
Moreover defining $\tilde{u}$ as in Lemma \ref{def u} we have
$$ \sqrt{\tilde{\rho}}\tilde{u} \in L^\infty(0,T; L^2(\mathbb{T})), \quad \tilde{\zeta} \in L^2(0,T; L^2(\mathbb{T})), \quad \tilde{m}=\sqrt{\tilde{\rho}}\tilde{\Lambda}= \tilde{\rho} \tilde{u}, \quad \tilde{\mathbb{P}}-a.s.$$
As a consequence of the equality in laws form Proposition \ref{conv stoch jak} and Theorem \ref{eq in law equation} we have that \eqref{eps stoc quantum} is satisfied on the new probability space by $(\tilde{\rho}_\epsilon,\tilde{u}_\epsilon)$ and hence by using the uniform bounds \eqref{eps indep reg} we can perform the limit $\epsilon \rightarrow 0$ to deduce that 
\begin{equation}
\int_{0}^{T} \int_{\mathbb{T}} \tilde{\rho}_\epsilon \tilde{u}_\epsilon \partial_x \psi dxdt \longrightarrow \int_{0}^{T} \int_{\mathbb{T}} \tilde{\rho}\tilde{u} \partial_x \psi dxdt,
\end{equation}
for any $\psi \in C^{\infty}_c([0,T) \times \mathbb{T}),\quad \tilde{\mathbb{P}}-a.s..$
Hence the continuity equation is satisfied by $(\tilde{\rho},\tilde{\zeta},\tilde{\Lambda})$ in the sense of distributions.  Concerning the momentum equation we recall that strong solutions satisfy the truncated formulation \eqref{truncated momentum} also in the weak formulation 
\begin{equation}\label{truncated weak epsilon}
\begin{split}
& \int_{\mathbb{T}} \tilde{\rho}_0 \beta_\delta(\tilde{u}_0)\psi(0,x)dx +\int_{0}^{T} \int_{\mathbb{T}} \tilde{\rho}_{\epsilon} \beta_\delta(\tilde{u}_{\epsilon}) \partial_t \psi dxdt+\int_{0}^{T} \int_{\mathbb{T}}\tilde{\rho}_{\epsilon} \tilde{u}_{\epsilon} \beta_\delta (\tilde{u}_\epsilon)\partial_x \psi dxdt \\ & + \int_{0}^{T} \int_{\mathbb{T}}2 \tilde{\rho}_\epsilon^{\frac{\gamma}{2}}\partial_x \tilde{\rho}_\epsilon^{\frac{\gamma}{2}} \beta^{'}_\delta(\tilde{u}_\epsilon) \psi dxdt- \int_{0}^{T} \int_{\mathbb{T}}\tilde{N}_\epsilon \beta^{'}_\delta (\tilde{u}_\epsilon)\psi dxdt -\int_{0}^{T} \int_{\mathbb{T}} \tilde{\rho}_{\epsilon}^{\frac{\alpha}{2}}\tilde{M}_\epsilon \beta^{'}_\delta (\tilde{u}_\epsilon) \partial_x \psi dxdt \\ & +\int_{0}^{T} \int_{\mathbb{T}}  \tilde{\rho}_\epsilon^{\frac{\alpha}{2}} \tilde{M}_\epsilon \beta^{''}_\delta (\tilde{u}_\epsilon) \partial_x \tilde{u}_\epsilon \psi dxdt + \int_{0}^{T} \int_{\mathbb{T}} \sum_{k \in \mathbb{N}} \dfrac{1}{2} \tilde{\rho}_\epsilon \beta_\delta^{''}(\tilde{u}_\epsilon) | F_k(\tilde{\rho}_\epsilon,\tilde{u}_\epsilon)|^2 \psi dxdt \\ & + \int_{0}^{T} \int_{\mathbb{T}} \sum_{k \in \mathbb{N}} \tilde{\rho}_\epsilon F_k(\tilde{\rho}_\epsilon,\tilde{u}_\epsilon)  \beta^{'}_\delta (\tilde{u}_\epsilon) \psi dx \text{d}\tilde{W}_k=0
\end{split}
\end{equation}
for any $\psi \in C^{\infty}_c([0,T) \times \mathbb{T}), \quad \tilde{\mathbb{P}}-a.s.$ \\
\\
We first perform the limit $\epsilon \rightarrow 0$ for $\delta$ fixed. 
First we observe that by using \eqref{conv rho f} and \eqref{beta hp} we have
\begin{equation}
\int_{0}^{T} \int_{\mathbb{T}} \tilde{\rho}_{\epsilon} \beta_\delta(\tilde{u}_{\epsilon}) \partial_t \psi  dxdt \longrightarrow \int_{0}^{T} \int_{\mathbb{T}} \tilde{\rho} \beta_\delta(\tilde{u}) \partial_t \psi dxdt,\quad \tilde{\mathbb{P}}-a.s.
\end{equation}
and, similarly, by \eqref{conv rho u f} we infer
\begin{equation}
\int_{0}^{T} \int_{\mathbb{T}} \tilde{\rho}_{\epsilon} \tilde{u}_{\epsilon} \beta_\delta (\tilde{u}_\epsilon)\partial_x \psi dxdt \longrightarrow \int_{0}^{T} \int_{\mathbb{T}} \tilde{\rho} \tilde{u} \beta_\delta (\tilde{u})\partial_x \psi dxdt,\quad \tilde{\mathbb{P}}-a.s..
\end{equation}
Concerning the pressure term, we observe that by using \eqref{conv der rho f} and \eqref{beta hp} we have 
\begin{equation}
\int_{0}^{T} \int_{\mathbb{T}} 2 \tilde{\rho}_\epsilon^{\frac{\gamma}{2}}\partial_x \tilde{\rho}_\epsilon^{\frac{\gamma}{2}} \beta^{'}_\delta(\tilde{u}_\epsilon) \psi dxdt \longrightarrow \int_{0}^{T} \int_{\mathbb{T}} 2 \tilde{\rho}^{\frac{\gamma}{2}}\partial_x \tilde{\rho}^{\frac{\gamma}{2}} \beta^{'}_\delta(\tilde{u}) \psi dxdt,\quad \tilde{\mathbb{P}}-a.s..
\end{equation}
Since by definition $$\tilde{M}_\epsilon= \tilde{\zeta}_\epsilon + \tilde{\rho}_\epsilon^{\frac{1-\alpha}{2}}(\partial_{xx} \sqrt{\tilde{\rho}_\epsilon}-4| \partial_x \tilde{\rho}_\epsilon^{\frac{1}{4}}|^2) \in L^2(0,T; L^2(\mathbb{T}),  \; \tilde{\mathbb{P}}-a.s.,$$ by using Proposition \ref{conv stoch jak}  we deduce that there exists $\tilde{M}$ such that $$\tilde{M}_\epsilon \rightharpoonup \tilde{M} \; \text{in} \; L^2(0,T; L^2(\mathbb{T}),  \; \tilde{\mathbb{P}}-a.s..$$ By virtue of \eqref{conv rho f} we infer
\begin{equation}
\int_{0}^{T} \int_{\mathbb{T}} \tilde{\rho}_{\epsilon}^{\frac{\alpha}{2}}\tilde{M}_\epsilon \beta^{'}_\delta (\tilde{u}_\epsilon) \partial_x \psi dxdt \longrightarrow \int_{0}^{T} \int_{\mathbb{T}} \tilde{\rho}^{\frac{\alpha}{2}}\tilde{M} \beta^{'}_\delta (\tilde{u}) \partial_x \psi dxdt,\quad \tilde{\mathbb{P}}-a.s..
\end{equation}
\\
For the extra dissipation term we observe that 
\begin{equation}
\epsilon \int_{0}^{T} \int_{\mathbb{T}} \partial_{xx} \tilde{u}_\epsilon \beta^{'}_\delta(\tilde{u}_\epsilon) \psi dxdt=-\epsilon \int_{0}^{T} \int_{\mathbb{T}} \partial_x \tilde{u}_\epsilon \beta^{''}_\delta (\tilde{u}_\epsilon)\partial_x \tilde{u}_\epsilon \psi -\epsilon \int_{0}^{T} \int_{\mathbb{T}} \partial_x \tilde{u}_\epsilon \beta^{'}_\delta (\tilde{u}_\epsilon) \partial_x \psi
\end{equation}
and 
\begin{equation*}
\epsilon \int_{0}^{T} \int_{\mathbb{T}} \partial_x \tilde{u}_\epsilon \beta^{''}_\delta (\tilde{u}_\epsilon)\partial_x \tilde{u}_\epsilon \psi \le \sqrt{\epsilon} \| \psi \|_{L^\infty} \| \sqrt{\tilde{\rho}} \partial_x \tilde{u}_\epsilon \|^2_{L^2_{t,x}} \| \beta^{''}_\delta \|_{L^\infty} \le \sqrt{\epsilon} c \| \beta^{''}_\delta \|_{L^\infty},
\end{equation*}
\begin{equation*}
\epsilon \int_{0}^{T} \int_{\mathbb{T}} \partial_x \tilde{u}_\epsilon \beta^{'}_\delta (\tilde{u}_\epsilon) \partial_x \psi \le \sqrt{\epsilon} \| \sqrt{\epsilon} \partial_x \tilde{u}_\epsilon \|_{L^2_{t,x}} \| \partial_x \psi \|_{L^2} \| \beta^{'}_\delta (\tilde{u}_\epsilon) \|_{L^\infty}
\end{equation*}
hence we deduce
\begin{equation}
\int_{0}^{T} \int_{\mathbb{T}} \tilde{N}_\epsilon \beta^{'}_\delta (\tilde{u}_\epsilon)\psi dxdt \longrightarrow 0,\quad \tilde{\mathbb{P}}-a.s..
\end{equation}
Concerning the remaining terms with $\beta^{''}$ we observe that the total variation satisfies
$$|\mu_\epsilon^{d,\delta}|\le  \int_{0}^{T} \int_{\mathbb{T}}  \tilde{\rho}_\epsilon^{\frac{\alpha}{2}} | \tilde{M}_\epsilon| \beta^{''}_\delta (\tilde{u}_\epsilon) |\partial_x \tilde{u}_\epsilon| dxdt \le \| \beta_\delta^{''}\|_{\infty} \| \tilde{\rho}_\epsilon^{\frac{\alpha}{2}} \partial_x \tilde{u}_\epsilon \|_{L^2_{t,x}} \| \tilde{M}_\epsilon \|_{L^2_{t,x}},$$
while the It$\text{\'{o}}$ correction term is estimated by using \eqref{f2} and \eqref{beta hp} as follows
\begin{equation}
|\mu^{s,\delta}_\epsilon| \le \int_{0}^{T} \int_{\mathbb{T}} \sum_{k \in \mathbb{N}} \dfrac{1}{2} \tilde{\rho}_\epsilon \beta_\delta^{''}(\tilde{u}_\epsilon) | F_k|^2 dxdt \le C \| \beta^{''}_\delta \|_{\infty} \sum_{k \in \mathbb{N}} \alpha^2_k\le C \delta.
\end{equation}
Now we use Proposition \ref{conv stoch jak} to deduce  $$\langle \mu_\epsilon^{d,\delta}, \psi \rangle \rightarrow \langle \mu^{d,\delta}, \psi \rangle, \quad  \langle \mu_\epsilon^{s,\delta}, \psi \rangle \rightarrow \langle \mu^{s,\delta}, \psi \rangle,\quad \tilde{\mathbb{P}}-a.s.$$ for all $\psi \in C((0,T) \times \mathbb{T}).$ 
\\
\\
In order to pass to the limit in the stochastic integral, we observe that 
$$F_k (\tilde{\rho}_\epsilon, \tilde{u}_\epsilon) \rightarrow F_k(\tilde{\rho},\tilde{u}) \quad \text{in} \; L^p((0,T) \times \mathbb{T}) \quad \text{for all} \quad  p \ge 1, \quad \tilde{\mathbb{P}}-a.s. $$
On the other hand 
\begin{equation*}
\begin{split}
& \mathbb{E} \int_{0}^{T} \bigg\| \int_{\mathbb{T}} \tilde{\rho}_\epsilon \mathbb{F} \beta^{'}_\delta (\tilde{u}_\epsilon) dx \bigg\|^2_{L^2(\mathfrak{U}, \mathbb{R})} dt \le \mathbb{E} \int_{0}^{T} \sum_{k=1}^{\infty} \bigg( \int_{\mathbb{T}}\tilde{\rho}_\epsilon F_k \beta^{'}_\delta (\tilde{u}_\epsilon) dx \bigg)^2ds \\ & \lesssim \sum_{k=1}^{\infty} \alpha^2_k \mathbb{E} \int_{0}^{T} \bigg( \int_{\mathbb{T}} |\tilde{\rho}_\epsilon \tilde{u}_\epsilon|^p dx \bigg)^2 \lesssim \sum_{k=1}^{\infty} \alpha^2_k,
\end{split}
\end{equation*}
hence we get $$  \int_{\mathbb{T}} \tilde{\rho}_\epsilon \mathbb{F}  \beta^{'}_\delta (\tilde{u}_\epsilon) \psi dx \longrightarrow \int_{\mathbb{T}} \tilde{\rho}  \mathbb{F}  \beta^{'}_\delta (\tilde{u}) \psi dx \quad \text{in} \; L^2(0,T;L_2(\mathfrak{U},\mathbb{R})), \;  \tilde{\mathbb{P}}-a.s..$$
Finally, by recalling the convergence result for $\tilde{W}_\epsilon$ in Proposition \ref{conv stoch jak} we use Lemma \ref{Stoch int conv Lemma} in order to get 
\begin{equation}\label{Prop stoch int conv}
\int_{0}^{T} \int_{\mathbb{T}} \tilde{\rho}_\epsilon \mathbb{F}  \beta^{'}_\delta (\tilde{u}_\epsilon) \psi dxd\tilde{W} \longrightarrow \int_{0}^{T} \int_{\mathbb{T}} \tilde{\rho} \mathbb{F}  \beta^{'}_\delta (\tilde{u}) \psi dxd\tilde{W}, \quad \text{in} \; L^2(0,T) \; \text{in probability}.
\end{equation}
\noindent
As a consequence, by sending $\epsilon \rightarrow 0$ in \eqref{truncated weak epsilon} we have that the limit process $(\tilde{\rho},\tilde{ \zeta}, \tilde{\Lambda})$ satisfies
\begin{equation}\label{trunc delta form}
\begin{split}
& \int_{\mathbb{T}} \tilde{\rho}_{0} \beta_\delta(\tilde{u}_{0})\psi(0,x)dx  +\int_{0}^{T} \int_{\mathbb{T}} \tilde{\rho} \beta_\delta(\tilde{u}) \partial_t \psi dxdt+\int_{0}^{T} \int_{\mathbb{T}}\tilde{\rho} \tilde{u} \beta_\delta (\tilde{u})\partial_x \psi dxdt \\ & + \int_{0}^{T} \int_{\mathbb{T}}2 \tilde{\rho}^{\frac{\gamma}{2}}\partial_x \tilde{\rho}^{\frac{\gamma}{2}} \beta^{'}_\delta(\tilde{u}) \psi dxdt -\int_{0}^{T} \int_{\mathbb{T}} \tilde{\rho}^{\frac{\alpha}{2}}\tilde{M} \beta^{'}_\delta (\tilde{u}) \partial_x \psi dxdt+\langle \mu^\delta, \psi \rangle \\ & +  \int_{0}^{T} \int_{\mathbb{T}} \tilde{\rho} \mathbb{F}  \beta^{'}_\delta (\tilde{u}) \psi dx \text{d}\tilde{W}=0,
\end{split}
\end{equation}
for any $\psi \in C^{\infty}_c([0,T) \times \mathbb{T})), \quad \tilde{\mathbb{P}}-a.s.$
\\
\\
Finally we prove that the limit process $[\tilde{\rho},\tilde{m}, \tilde{\zeta},\tilde{\Lambda},\tilde{W}]$ satisfies the energy dissipation equality \eqref{energy dissipation equality}.  We claim that the following identity is satisfied
\begin{equation}\label{delta dissipation equality}
\begin{split}
\int_{0}^{T} \int_{\mathbb{T}} \tilde{\rho}^{\frac{\alpha}{2}} \tilde{\zeta} \beta^{'}_\delta(\tilde{u}) \phi_m(\tilde{\rho}) \psi dxdt & = - \int_{0}^{T} \int_{\mathbb{T}} \tilde{\rho}^{\alpha-\frac{1}{2}} \tilde{\Lambda} \beta^{'}_\delta(\tilde{u}) \phi_m(\tilde{\rho}) \partial_x \psi dxdt  \\ & -2\int_{0}^{T} \int_{\mathbb{T}} \tilde{\rho}^{\frac{\alpha-1}{2}} \tilde{\Lambda} \partial_x \tilde{\rho}^{\frac{\alpha}{2}}\beta^{'}_\delta(\tilde{u}) \phi_m(\tilde{\rho}) \psi dxdt \\ & -  \int_{0}^{T} \int_{\mathbb{T}} \tilde{\rho}^{\alpha-\frac{1}{2}} \tilde{\Lambda} \beta^{'}_\delta(\tilde{u}) \partial_x \tilde{\rho} \phi^{'}_m(\tilde{\rho}) \psi dxdt \\ & -\mathcal{M}_{m,\delta},
\end{split}
\end{equation}
for all test functions $\psi \in C^\infty_c((0,T)) \times \mathbb{T};\mathbb{R}), \;  \tilde{\mathbb{P}}-a.s.$,
\\ \\
where \begin{equation}
\phi_m(y)= \begin{cases}
0 & \;  \;  \; 0 \le y \le \frac{1}{2m}\\
2my-1   \qquad &  \frac{1}{2m} \le y \le \frac{1}{m}\\
1 &  \; \,\frac{1}{m} \le y 
\end{cases}
\end{equation}
and $\mathcal{M}_{m,\delta}$ is the measure such that 
\begin{equation}
\langle  \tilde{\rho}_\epsilon^{\frac{\alpha}{2}} \beta^{''}_\delta (\tilde{u_\epsilon}) \tilde{u_\epsilon} \tilde{\zeta}_\epsilon \phi_m(\tilde{\rho_\epsilon}),\psi \rangle \rightarrow \langle \mathcal{M}_{m,\delta}, \psi \rangle,\quad \tilde{\mathbb{P}}-a.s. 
\end{equation}
as $\epsilon \rightarrow 0,$ for all $\psi \in C((0,T) \times \mathbb{T}). $
\\
\\
The strong solution $(\tilde{\rho}_\epsilon, \tilde{u}_\epsilon)$ indeed satisfies
\begin{equation} 
\begin{split}
\int_{0}^{T} \int_{\mathbb{T}} \tilde{\rho}^{\frac{\alpha}{2}}_\epsilon \tilde{\zeta}_\epsilon \beta^{'}_\delta(\tilde{u}_\epsilon)\phi_m(\tilde{\rho_\epsilon}) \psi dxdt  = & - \int_{0}^{T} \int_{\mathbb{T}} \tilde{\rho}_\epsilon^{\alpha-\frac{1}{2}} \tilde{\Lambda}_\epsilon \beta^{'}_\delta(\tilde{u}_\epsilon) \phi_m(\tilde{\rho_\epsilon})\partial_x\psi dxdt  \\ & -2\int_{0}^{T} \int_{\mathbb{T}} \tilde{\rho}_\epsilon^{\frac{\alpha-1}{2}} \tilde{\Lambda}_\epsilon \partial_x \tilde{\rho}_\epsilon^{\frac{\alpha}{2}}\beta^{'}_\delta(\tilde{u}_\epsilon) \phi_m(\tilde{\rho_\epsilon}) \psi dxdt \\ & - \int_{0}^{T} \int_{\mathbb{T}} \tilde{\rho_\epsilon}^{\alpha-\frac{1}{2}} \tilde{\Lambda_\epsilon} \beta^{'}_\delta(\tilde{u_\epsilon}) \partial_x \tilde{\rho_\epsilon} \phi^{'}_m(\tilde{\rho_\epsilon}) \psi dxdt \\ &  + \int_{0}^{T} \int_{\mathbb{T}} \tilde{\rho}_\epsilon^{\frac{\alpha-1}{2}} \tilde{\Lambda}_\epsilon \beta^{''}_\delta(\tilde{u}_\epsilon) \tilde{\zeta}_\epsilon \phi_m(\tilde{\rho_\epsilon}) \psi dxdt
\end{split}
\end{equation}
for all test functions $\psi \in C^\infty_c((0,T)) \times \mathbb{T};\mathbb{R}), \;  \tilde{\mathbb{P}}-a.s.$ \\
Furthermore, from direct computations the following elementary properties of $\phi_m$ hold 
\begin{equation}\label{phi reg}
\begin{split}
& | y \phi^{'}_m(y)| \le 2, \\ & 
| y^\beta \phi^{'}_m(y)| \le \frac{2}{m^{\beta-1}}, \quad \text{for all}  \;  \beta >1, \\ &
\dfrac{\phi_m(y)}{y^\beta} \le 2^{\beta} m^\beta, \quad \quad \;\; \text{for all}  \;  \beta >0.
\end{split}
\end{equation}
\\
Hence by recalling the regularity estimates \eqref{eps indep reg},  \eqref{beta hp}, Lemma \ref{def u} and the continuity of $\phi_m$ we get
\begin{equation}
\int_{0}^{T} \int_{\mathbb{T}} \tilde{\rho}^{\frac{\alpha}{2}}_\epsilon \tilde{\zeta}_\epsilon \beta^{'}_\delta(\tilde{u}_\epsilon) \phi_m(\tilde{\rho_\epsilon}) \psi dxdt \longrightarrow \int_{0}^{T} \int_{\mathbb{T}} \tilde{\rho}^{\frac{\alpha}{2}} \tilde{\zeta} \beta^{'}_\delta(\tilde{u})\phi_m(\tilde{\rho})  \psi dxdt,  \quad \tilde{\mathbb{P}}-a.s.
\end{equation}
\begin{equation}
\int_{0}^{T} \int_{\mathbb{T}} \tilde{\rho}_\epsilon^{\alpha-\frac{1}{2}} \tilde{\Lambda}_\epsilon \beta^{'}_\delta(\tilde{u}_\epsilon) \phi_m(\tilde{\rho_\epsilon}) \partial_x \psi dxdt \longrightarrow \int_{0}^{T} \int_{\mathbb{T}} \tilde{\rho}^{\alpha-\frac{1}{2}} \tilde{\Lambda} \beta^{'}_\delta(\tilde{u})  \phi_m(\tilde{\rho})\partial_x \psi dxdt, \quad \tilde{\mathbb{P}}-a.s..
\end{equation}
\newline
Moreover since $$\partial_x \tilde{\rho}^{\frac{\alpha}{2}} \tilde{\rho}^{\frac{\alpha-1}{2}}= \dfrac{\alpha}{2(\alpha-\frac{1}{2})} \partial_x \tilde{\rho}^{\alpha-\frac{1}{2}},$$
then 
\begin{equation}
\int_{0}^{T} \int_{\mathbb{T}} \tilde{\rho}_\epsilon^{\frac{\alpha-1}{2}} \tilde{\Lambda}_\epsilon \partial_x \tilde{\rho}_\epsilon^{\frac{\alpha}{2}}\beta^{'}_\delta(\tilde{u}_\epsilon) \phi_m(\tilde{\rho_\epsilon}) \psi dxdt \longrightarrow \int_{\mathbb{T}} \tilde{\rho}^{\frac{\alpha-1}{2}} \tilde{\Lambda} \partial_x \tilde{\rho}^{\frac{\alpha}{2}}\beta^{'}_\delta(\tilde{u}) \phi_m(\tilde{\rho}) \psi dxdt, \quad \tilde{\mathbb{P}}-a.s..
\end{equation}
Concerning the term with $\beta^{''}$ we have that by using Proposition \ref{conv stoch jak}
\begin{equation}
\int_{0}^{T} \int_{\mathbb{T}} \tilde{\rho}_\epsilon^{\frac{\alpha-1}{2}} \tilde{\Lambda}_\epsilon \beta^{''}_\delta(\tilde{u}_\epsilon) \tilde{\zeta}_\epsilon \phi_m(\tilde{\rho_\epsilon})\psi dxdt \longrightarrow \langle \mathcal{M}_{m,\delta}, \psi \rangle,\quad \tilde{\mathbb{P}}-a.s.. 
\end{equation}
Moreover, by virtue of \eqref{beta hp}, \eqref{eps indep reg} and \eqref{phi reg} we have $\tilde{\mathbb{P}}-a.s.$ that 
\begin{equation}
\begin{split}
& \big| \int_{0}^{T} \int_{\mathbb{T}} \tilde{\rho}_\epsilon^{\frac{\alpha-1}{2}} \tilde{\Lambda}_\epsilon \beta^{''}_\delta(\tilde{u}_\epsilon) \tilde{\zeta}_\epsilon \phi_m(\tilde{\rho_\epsilon})\psi dxdt \big| = \big|\int_{0}^{T} \int_{\mathbb{T}} \tilde{\rho}_\epsilon^{\frac{\alpha}{2}-{\frac{1}{4}}} \tilde{\Lambda}_\epsilon \beta^{''}_\delta(\tilde{u}_\epsilon) \tilde{\zeta}_\epsilon \dfrac{\phi_m(\tilde{\rho_\epsilon})}{\tilde{\rho}^{\frac{1}{4}}_\epsilon}\psi dxdt \big| \\ & \le \| \tilde{\rho}_\epsilon^{\frac{\alpha}{2}-{\frac{1}{4}}} \|_{L^\infty} \| \tilde{\Lambda}_\epsilon \|_{L^2} \| \tilde{\zeta}_\epsilon \|_{L^2} \| \beta^{''}(\tilde{u}_\epsilon) \|_{L^\infty} \bigg \| \dfrac{\phi_m(\tilde{\rho_\epsilon})}{\tilde{\rho}^{\frac{1}{4}}_\epsilon} \bigg \|_{L^\infty} \| \psi \|_{L^\infty} \le C \delta m^{\frac{1}{4}}.
\end{split}
\end{equation}
Note that the constant $C$ is independent on $\epsilon$, and therefore the total variation satisfies 
\begin{equation}
| M_{m,\delta} | \le C \delta m^{\frac{1}{4}}\qquad \tilde{\mathbb{P}}-a.s..
\end{equation}
Finally by using Proposition \ref{conv stoch jak} and \eqref{beta hp} we have that $\tilde{\mathbb{P}}-a.s.$
\begin{equation}
\int_{0}^{T} \int_{\mathbb{T}} \tilde{\rho_\epsilon}^{\alpha-\frac{1}{2}} \tilde{\Lambda_\epsilon} \beta^{'}_\delta(\tilde{u_\epsilon}) \partial_x \tilde{\rho_\epsilon} \phi^{'}_m(\tilde{\rho_\epsilon}) \psi dxdt \longrightarrow \int_{0}^{T} \int_{\mathbb{T}} \tilde{\rho}^{\alpha-\frac{1}{2}} \tilde{\Lambda} \beta^{'}_\delta(\tilde{u}) \partial_x \tilde{\rho} \phi^{'}_m(\tilde{\rho}) \psi dxdt=R_{m,\delta}
\end{equation}
and again by virtue of \eqref{phi reg} and \eqref{beta hp}, $\tilde{\mathbb{P}}-a.s$,
\begin{equation}
\begin{split}
& \big| \int_{0}^{T} \int_{\mathbb{T}} \tilde{\rho_\epsilon}^{\alpha-\frac{1}{2}} \tilde{\Lambda_\epsilon} \beta^{'}_\delta(\tilde{u_\epsilon}) \partial_x \tilde{\rho_\epsilon} \phi^{'}_m(\tilde{\rho_\epsilon}) \psi dxdt \big| = \big|  \dfrac{1}{(\alpha-\frac{1}{2})}  \int_{0}^{T} \int_{\mathbb{T}} \partial_x \tilde{\rho_\epsilon}^{\alpha-\frac{1}{2}} \tilde{\Lambda_\epsilon} \beta^{'}_\delta(\tilde{u_\epsilon}) \tilde{\rho_\epsilon} \phi^{'}_m(\tilde{\rho_\epsilon}) \psi dxdt \big| \\ & \le \dfrac{1}{(\alpha-\frac{1}{2})} \| \partial_x \tilde{\rho_\epsilon}^{\alpha-\frac{1}{2}} \|_{L^1} \| \tilde{u}_\epsilon \beta^{'}_\delta(\tilde{u_\epsilon}) \|_{L^\infty} \| \tilde{\rho}_\epsilon^{\frac{3}{2}} \phi^{'}_m(\tilde{\rho_\epsilon}) \|_{L^\infty} \| \psi \|_{L^\infty} \le \dfrac{C}{\delta \sqrt{m}}.
\end{split}
\end{equation}
Hence we deduce 
\begin{equation}
| R_{m,\delta}| \le \dfrac{C}{\delta \sqrt{m}},\qquad \tilde{\mathbb{P}}-a.s
\end{equation}
\noindent
for a given constant $C$ independent on $\epsilon.$
\\
In what follows,  we prove the energy and entropy inequalities \eqref{energy weak def}, \eqref{entropy weak def} satisfied by a weak dissipative martingale solution to system \eqref{stoc quantum} in the sense of Definition \ref{def weak dissip sol}.
Last goal is to perform the asymptotic limit in the total energy balance.  First, we observe that by using Proposition \ref{conv stoch jak} and Theorem \ref{eq in law equation} the energy balance 
\begin{equation}
\begin{split}
&  -\int_{0}^{T} \partial_t \phi  \int_{\mathbb{T}} \bigg[\dfrac{1}{2}\tilde{\rho}_{\epsilon} |\tilde{u}_{\epsilon}|^2+ \dfrac{\tilde{\rho}_{\epsilon}^\gamma}{\gamma-1}+ | \partial_x \sqrt{\tilde{\rho}_{\epsilon}}|^2 \bigg] dxdt +\int_{0}^{T} \phi \int_{\mathbb{T}} (\mu(\tilde{\rho}_{\epsilon}) +\epsilon)| \partial_x \tilde{u}_{\epsilon} |^2 dxdt  \\ & = \phi(0) \int_{\mathbb{T}} \bigg[\dfrac{1}{2}\tilde{\rho}_0|\tilde{u}_0|^2+ \dfrac{\tilde{\rho}_0^\gamma}{\gamma-1}+ | \partial_x \sqrt{\tilde{\rho}_0}|^2 \bigg] dx+\dfrac{1}{2} \int_{0}^{T} \phi \int_{\mathbb{T}} \sum_{k=1}^{\infty} \tilde{\rho}_\epsilon |F_k|^2 dxdt \\ & + \sum_{k=1}^{\infty} \int_{0}^{T} \phi \int_{\mathbb{T}} \tilde{\rho}_\epsilon F_k \tilde{u}_\epsilon dxd\tilde{W}_k , \quad \text{for all} \; \phi \in C_c^\infty ([0,T)),  \; \phi \ge 0, \; \tilde{\mathbb{P}}-a.s.
\end{split}
\end{equation}
is satisfied by $[\tilde{\rho}_\epsilon,  \tilde{\zeta}_\epsilon, \tilde{\Lambda}_\epsilon]$ on the new probability space with initial energy $$\int_{\mathbb{T}} \bigg[ \dfrac{1}{2} \tilde{\rho}_0 |\tilde{u}_0|^2 + \dfrac{\tilde{\rho}^\gamma_0}{\gamma-1}+ \partial_x \sqrt{\tilde{\rho}_0} \bigg] dx. $$
On the other hand,  since $$F_k (\tilde{\rho}_\epsilon, \tilde{u}_\epsilon) \rightarrow F_k(\tilde{\rho},\tilde{u}) \quad \text{in} \; L^p((0,T) \times \mathbb{T}) \quad \text{for all} \quad  p \ge 1, \quad \tilde{\mathbb{P}}-a.s. $$
and $$ \int_{\mathbb{T}} \tilde{\rho}_\epsilon \mathbb{F}  \tilde{u}_\epsilon \psi dx \longrightarrow \int_{\mathbb{T}} \tilde{\rho}  \mathbb{F}  \tilde{u} \psi dx \quad \text{in} \; L^2(0,T;L_2(\mathfrak{U},\mathbb{R})), \;  \tilde{\mathbb{P}}-a.s.,$$
$$\tilde{W}_\epsilon \rightarrow \tilde{W} \in C(0,T;\mathfrak{U}_0), \quad
\tilde{\mathbb{P}}-a.s. $$
we use Lemma \ref{Stoch int conv Lemma} to pass to the limit in the stochastic integral.  Finally by using the uniform bounds \eqref{eps indep reg} we pass to the limit in the It$\hat{\text{o}}$ correction term to get \eqref{energy weak def}.
\\
Similarly,  we observe that by using Proposition \ref{conv stoch jak} and Theorem \ref{eq in law equation} the entropy balance 
\begin{equation}
\begin{split}
&  -\int_{0}^{T} \partial_t \phi  \int_{\mathbb{T}} \bigg[\dfrac{1}{2}\tilde{\rho}_{\epsilon} |\tilde{V}_{\epsilon}|^2+ \dfrac{\tilde{\rho}_{\epsilon}^\gamma}{\gamma-1}+ | \partial_x \sqrt{\tilde{\rho}_{\epsilon}}|^2 \bigg] dxdt + \dfrac{4 \gamma}{(\gamma+\alpha -1)^2} \int_{0}^{T} \phi \int_{\mathbb{T}} | \partial_x \tilde{\rho}_{\epsilon}^{\frac{\gamma+\alpha-1}{2} } |^2 dxdt \\ & + \dfrac{4\epsilon \gamma}{(\gamma-1)^2} \int_{0}^{T} \phi \int_{\mathbb{T}} | \partial_x \tilde{\rho}_{\epsilon}^{\frac{\gamma-1}{2} } |^2 dxdt +\dfrac{\epsilon}{2} \int_{0}^{T} \phi \int_{\mathbb{T}} | \partial_{xx} \log \tilde{\rho}_{\epsilon} |^2 dxdt \\ & + \dfrac{4}{\alpha^2} \int_{0}^{T} \phi \int_{\mathbb{T}} | \partial_{xx} \tilde{\rho}_{\epsilon}^{\frac{\alpha}{2}} |^2 dxdt +\dfrac{4(4-3 \alpha) }{3 \alpha^3}\int_{0}^{T} \phi \int_{\mathbb{T}} \tilde{\rho}_{\epsilon}^{-\alpha} | \partial_x \tilde{\rho}_{\epsilon}^\frac{\alpha}{2}|^4 dxdt  \\ &  =  \phi(0) \int_{\mathbb{T}} \bigg[\dfrac{1}{2}\tilde{\rho}_0 |\tilde{V}_0|^2+ \dfrac{\tilde{\rho}_0^\gamma}{\gamma-1}+ | \partial_x \sqrt{\tilde{\rho}_0}|^2 \bigg] dx+\dfrac{1}{2} \int_{0}^{T} \phi \int_{\mathbb{T}} \sum_{k=1}^{\infty} \tilde{\rho}_\epsilon |F_k|^2 dxdt \\ & + \sum_{k=1}^{\infty} \int_{0}^{T} \phi \int_{\mathbb{T}} \tilde{\rho}_\epsilon F_k \tilde{V}_\epsilon dxdW_k , \quad \text{for all} \; \phi \in C_c^\infty ([0,T)),  \; \phi \ge 0, \; \tilde{\mathbb{P}}-a.s.
\end{split}
\end{equation}
is satisfied by $[\tilde{\rho}_\epsilon,  \tilde{\zeta}_\epsilon, \tilde{\Lambda}_\epsilon]$ on the new probability space with initial entropy $$\int_{\mathbb{T}} \bigg[ \dfrac{1}{2} \tilde{\rho}_0|\tilde{V}_0 |^2 + \dfrac{\tilde{\rho}^\gamma_0}{\gamma-1}+ \partial_x \sqrt{\tilde{\rho}_0} \bigg] dx,$$
with the same lines of argument as before we use \eqref{Prop stoch int conv} and the uniform bounds \eqref{eps indep reg} to pass to the limit in the stochastic integral and the It$\hat{\text{o}}$ correction term to deduce \eqref{entropy weak def}.
\\
\\
\textbf{Step 2: The limit $\delta \rightarrow 0$ and $m\to\infty$}
\\
We perform the limit $\delta \rightarrow 0$ which concludes the proof of the existence result.  We observe that for the total variation of $\mu^\delta= \mu^{d,\delta}+\mu^{s,\delta}$ we have that  $$ | \mu^\delta | (\mathbb{T}) \le \| \beta_\delta^{''} \|_{\infty} \le C\delta.$$
Then by using \eqref{beta hp} and the dominated convergence theorem we have that \eqref{trunc delta form} converges to 
\begin{equation}
\begin{split}
& \int_{\mathbb{T}} \tilde{\rho}_{0} \tilde{u}_{0}\psi(0,x)dx  +\int_{0}^{T} \int_{\mathbb{T}} \tilde{\rho} \tilde{u} \partial_t \psi dxdt+\int_{0}^{T} \int_{\mathbb{T}}\tilde{\rho} \tilde{u}^2 \partial_x \psi dxdt \\ & + \int_{0}^{T} \int_{\mathbb{T}}2 \tilde{\rho}^{\frac{\gamma}{2}}\partial_x \tilde{\rho}^{\frac{\gamma}{2}} \psi dxdt -\int_{0}^{T} \int_{\mathbb{T}} \tilde{\rho}^{\frac{\alpha}{2}}\tilde{M}  \partial_x \psi dxdtt  \\ & +  \int_{0}^{T} \int_{\mathbb{T}} \tilde{\rho} \mathbb{F}  \psi dx \text{d}\tilde{W}=0,
\end{split}
\end{equation}
for any $\psi \in C^{\infty}_c([0,T) \times \mathbb{T})), \quad \tilde{\mathbb{P}}-a.s.$
\\
\\
In order to complete the proof of the existence of a weak dissipative martingale solution to system \eqref{stoc quantum} in the sense of Definition \ref{def weak dissip sol} it remains to prove the dissipation equality \eqref{energy dissipation equality}.  To this purpose we observe that 
\begin{equation}\label{phi conv}
\tilde{\rho}^{\beta} \phi_m (\tilde{\rho}) \rightarrow \tilde{\rho}^\beta, \; \text{for almost every}\; (x,t) \in \mathbb{T}\times (0,T), \; \tilde{\mathbb{P}}-a.s.
\end{equation}
and we choose $\delta= \dfrac{1}{m^\theta}$ with $\theta \in (\frac{1}{4}, \frac{1}{2}),$ we use \eqref{beta hp},\eqref{eps indep reg} and \eqref{phi conv} to deduce
\begin{equation}
\int_{0}^{T} \int_{\mathbb{T}} \tilde{\rho}^{\frac{\alpha}{2}} \tilde{\zeta} \beta^{'}_\delta(\tilde{u}) \phi_m(\tilde{\rho}) \psi dxdt \longrightarrow \int_{0}^{T} \int_{\mathbb{T}} \tilde{\rho}^{\frac{\alpha}{2}} \tilde{\zeta} \psi dxdt, \quad \tilde{\mathbb{P}}-a.s.
\end{equation}
\begin{equation}
\int_{0}^{T} \int_{\mathbb{T}} \tilde{\rho}^{\alpha-\frac{1}{2}} \tilde{\Lambda} \beta^{'}_\delta(\tilde{u}) \phi_m(\tilde{\rho}) \partial_x \psi dxdt \longrightarrow \int_{0}^{T} \int_{\mathbb{T}} \tilde{\rho}^{\alpha-\frac{1}{2}} \tilde{\Lambda}  \partial_x \psi dxdt, \quad \tilde{\mathbb{P}}-a.s.
\end{equation}
\begin{equation}
\int_{\mathbb{T}} \tilde{\rho}^{\frac{\alpha-1}{2}} \tilde{\Lambda} \partial_x \tilde{\rho}^{\frac{\alpha}{2}}\beta^{'}_\delta(\tilde{u}) \phi_m(\tilde{\rho}) \psi dxdt \longrightarrow \int_{\mathbb{T}} \tilde{\rho}^{\frac{\alpha-1}{2}} \tilde{\Lambda} \partial_x \tilde{\rho}^{\frac{\alpha}{2}} \psi dxdt, \quad \tilde{\mathbb{P}}-a.s.
\end{equation}
\begin{equation}
| M_{m,\delta} |  \longrightarrow  0, \quad \tilde{\mathbb{P}}-a.s.
\end{equation}
\begin{equation}
| R_{m,\delta}| \longrightarrow 0, \quad \tilde{\mathbb{P}}-a.s..
\end{equation}
as $m \rightarrow \infty.$ Hence by passing to the limit in \eqref{delta dissipation equality} we get that $(\tilde{\rho}, \tilde{\Lambda}, \tilde{\zeta}),$ satisfies \eqref{energy dissipation equality}. This concludes the convergence result and the proof of Theorem \ref{main theorem weak}.
\\
\\
\subsection*{Acknowledgments}
The authors gratefully acknowledge the partial support by the Gruppo
Na\-zio\-na\-le per l’Analisi Matematica, la Probabilit\`a e le loro
Applicazioni (GNAMPA) of the Istituto Nazionale di Alta Matematica
(INdAM), and by the PRIN 2020 ``Nonlinear evolution PDEs, fluid
dynamics and transport equations: theoretical foundations and
applications'' and by the PRIN2022
``Classical equations of compressible fluids mechanics: existence and
properties of non-classical solutions''. The first and the third authors gratefully acknowledge the partial support by PRIN2022-PNRR ``Some
mathematical approaches to climate change and its impacts.''
\\
\\
\textbf{Declarations}
\\
\\
\textbf{Data Availability.} The authors declare that data sharing is not applicable to this article as no datasets were generated or analysed.
\\
\\
\textbf{Conflict of interest.} The authors declare that they have no conflict of interest.

\end{document}